\DeclareMathAlphabet{\mathpzc}{OT1}{pzc}{m}{it}
\numberwithin{equation}{section}
\theoremstyle{plain}
\newtheorem{theorem}{Theorem}
\crefname{theorem}{theorem}{Theorems}
\Crefname{Theorem}{Theorem}{Theorems}
\newaliascnt{lemma}{theorem}
\newtheorem{lemma}[lemma]{Lemma}
\crefname{lemma}{lemma}{lemmas}
\Crefname{Lemma}{Lemma}{Lemmas}
\newaliascnt{corollary}{theorem}
\newtheorem{corollary}[corollary]{Corollary}
\crefname{corollary}{corollary}{corollaries}
\Crefname{Corollary}{Corollary}{Corollaries}
\newaliascnt{proposition}{theorem}
\newtheorem{proposition}[proposition]{Proposition}
\crefname{proposition}{proposition}{propositions}
\Crefname{Proposition}{Proposition}{Propositions}
\theoremstyle{definition}
\crefname{definition}{definition}{definitions}
\Crefname{definition}{Definition}{Definitions}
\newaliascnt{remark}{definition}
\newtheorem{remark}[remark]{Remark}
\crefname{remark}{remark}{remarks}
\Crefname{Remark}{Remark}{Remarks}
\crefname{example}{example}{examples}
\Crefname{Example}{Example}{Examples}
\Crefname{assumption}{\textbf{H}\hspace{-3pt}}{\textbf{H}\hspace{-3pt}}
\crefname{assumption}{\textbf{H}}{\textbf{H}}
\newtheorem{assumptionA}{\textbf{A}\hspace{-3pt}}
\Crefname{assumptionA}{\textbf{A}\hspace{-3pt}}{\textbf{A}\hspace{-3pt}}
\crefname{assumptionA}{\textbf{A}}{\textbf{A}}
\Crefname{assumptionG}{\textbf{G}\hspace{-3pt}}{\textbf{G}\hspace{-3pt}}
\crefname{assumptionG}{\textbf{G}}{\textbf{G}}
\Crefname{assumptionAO}{\textbf{AO}\hspace{-3pt}}{\textbf{AO}\hspace{-3pt}}
\crefname{assumptionAO}{\textbf{AO}}{\textbf{AO}}
\newtheorem{assumptionAp}{\textbf{A}\hspace{-3pt}}
\Crefname{assumptionAp}{\textbf{A}\hspace{-3pt}}{\textbf{A}\hspace{-3pt}}
\crefname{assumptionAp}{\textbf{A}}{\textbf{A}}
\newcommand{\bes}{\begin{equation*}}
\newcommand{\ees}{\end{equation*}}
\newcommand{\beas}{\begin{eqnarray}}
\newcommand{\eeas}{\end{eqnarray}}
\newcommand{\bea}{\begin{eqnarray}}
\newcommand{\eea}{\end{eqnarray}}
\newcommand{\be}{\begin{equation}}
\newcommand{\ee}{\end{equation}}
\newcommand{\bei}{\begin{itemize}}
\newcommand{\eei}{\end{itemize}}
\newcommand{\bec}{\begin{cases}}
\newcommand{\eec}{\end{cases}}
\newcommand{\ben}{\begin{enumerate}}
\newcommand{\een}{\end{enumerate}}
\newcommand{\ps}[2]{\left\langle#1,#2 \right\rangle}
\def\mcb{\mathcal{B}}  
\def\mcg{\mathcal{G}}
\def\mcp{\mathcal{P}}
\def\mcp{\mathcal{P}}
\def\msa{\mathsf{A}}
\def\frp{\mathfrak{p}}
\def\frq{\mathfrak{q}}
\def\PE{\mathbb{E}}
\def\plusinfty{+\infty}
\def\pistarT{\pi^{\star}}
\def\txts{\textstyle}
\def\kappazeta{\kappa_{\zeta}}
\def\ellzeta{\ell_{\zeta}}
\def\rmC{\mathrm{C}}
\def\alc{\mathrm{alc}}
\def\mcpalcd{\mcp_{\alc}(\rset^d)}
\def\tmcg{\tilde{\mcg}}
\def\bfW{\mathbf{W}}
\newcommand{\bbRD}{\mathbb{R}^d}
\newcommand{\N}{\mathbb{N}}
\newcommand{\cB}{\mathcal{B}}
\newcommand{\cC}{\mathcal{C}}
\newcommand{\cF}{\mathcal{F}}
\newcommand{\cG}{\mathcal{G}}
\newcommand{\cP}{\mathcal{P}}
\newcommand{\cL}{\mathcal{L}}
\newcommand{\scrH}{\mathscr{H}}
\newcommand{\bbl}{\begin{block}}
\newcommand{\ebl}{\end{block}}
\newcommand{\De}{\mathrm{d}}
\newcommand{\rmR}{\mathrm{R}}
\newcommand{\rmL}{\mathrm{L}}
\newcommand{\rme}{\mathrm{e}}
\newcommand{\Id}{\operatorname{Id}}
\newcommand{\abs}[1]{\left\vert #1 \right\vert}
\newcommandx{\Vnorm}[2][1=V]{\| #2 \|_{#1}}
\newcommandx{\norm}[2][1=]{\ifthenelse{\equal{#1}{}}{\left\Vert #2 \right\Vert}{\left\Vert #2 \right\Vert^{#1}}}
\newcommandx{\normLigne}[2][1=]{\ifthenelse{\equal{#1}{}}{\Vert #2 \Vert}{\Vert #2\Vert^{#1}}}
\def\rset{\mathbb{R}}
\def\rsets{\mathbb{R}^*}
\def\nset{\mathbb{N}}
\def\nsets{\mathbb{N}^*}
\def\ie{\textit{i.e.}}
\def\eqsp{\;}
\def\rmd{\mathrm{d}}
\def\Ent{\mathrm{Ent}}
\def\Leb{\mathrm{Leb}}
\def\varphistar{\varphi^\star}
\def\psistar{\psi^{\star}}
\def\Leb{\mathrm{Leb}}
\def\varphistar{\varphi^\star}
\def\psistar{\psi^{\star}}
\title[Exponential convergence of Sinkhorn's algorithm]{
Quantitative contraction rates for Sinkhorn's algorithm:
beyond bounded costs and  compact marginals}
\author{Giovanni Conforti}
\address{Università degli Studi di Padova}
\curraddr{Dipartimento di Matematica ``Tullio Levi-Civita'', Università degli Studi di Padova, Italia.}
\email{giovanni.conforti@math.unipd.it}
\thanks{GC acknowledges funding from the grant SPOT (ANR-20-CE40-0014).}
\author{Alain Oliviero Durmus}
\address{\'Ecole Polytechnique}
\curraddr{CMAP, {\'E}cole polytechnique, IP Paris, France.}
\email{alain.durmus@polytechnique.edu}
\thanks{AD would like
to thank the Isaac Newton Institute for Mathematical Sciences for support and hospitality during the programme
\emph{The mathematical and statistical foundation of future data-driven engineering} when work on this paper was undertaken. This work was supported by: EPSRC grant number
EP/R014604/1.}
\author{Giacomo Greco}
\address{Università degli Studi di Roma Tor Vergata}
\curraddr{RoMaDS - Dipartimento di Matematica, Università degli Studi di Roma Tor Vergata, Italia.}
\email{greco@mat.uniroma2.it}
\thanks{GG  thanks \'Ecole Polytechnique for its hospitality, where this research has been carried out, and NDNS+ for funding his visit there (NDNS/2023.004 PhD Travel Grant).  GG was partially supported by the NWO Research Project 613.009.111 "Analysis meets Stochastics: Scaling limits in complex systems", and by the PRIN project GRAFIA (CUP:
E53D23005530006). GG is currently supported by the MUR Departement of Excellence Programme 2023-2027 MatMod@Tov (CUP: E83C23000330006).}
\begin{document}

\begin{abstract}
We show non-asymptotic exponential convergence of Sinkhorn iterates to the Schrödinger potentials, solutions of the quadratic Entropic Optimal Transport problem on $\mathbb{R}^ d$. 
Our results hold under mild assumptions on the marginal inputs: in particular, we only assume that they admit an asymptotically positive log-concavity profile, covering as special cases log-concave distributions and bounded smooth perturbations of quadratic potentials. 
Up to the authors'
knowledge, these are the first results which establish exponential
convergence of Sinkhorn's algorithm in a general setting without assuming bounded cost
functions or compactly supported marginals.

\noindent \textbf{Keywords:} Entropic Optimal Transport, Sinkhorn's algorithm, Schr{\"o}dinger bridge problem.

\noindent \textbf{MSC classification:} 49Q22, 90C25 (Primary), 49N05, 93E20, 47D07 (Secondary).
\end{abstract}

\maketitle

\section{Introduction}

Given two distributions $\mu,\nu \in \mcp(\rset^d)$, we consider in this paper the problem of finding a solution to the entropic optimal transport problem: for $T \geq 0$,

\begin{equation}
\label{EOT}\tag{EOT}
\text{minimize } \int
\frac{|x-y|^2}{2}\,\De\pi(x,y)+T\,\scrH(\pi|\mu\otimes\nu) \text{
  under the constraint } \pi\in\Pi(\mu,\nu)\eqsp,
\end{equation}
where $\scrH$ denotes relative entropy (or Kullback-Leibler
divergence) and $\Pi(\mu,\nu)$ is the set of couplings of $\mu$ and
$\nu$. In \ref{EOT}, the case $T =0$ corresponds to the
Monge-Kantorovich or Optimal Transport (OT) problem and $T$ has to be
understood as a regularization parameter. Remarkably, the variational problem \ref{EOT}, whose study in statistical machine learning is motivated by its computational advantages over the Monge-Kantorovich problem, is equivalent to the Schr\"odinger
problem (SP), that is an old statistical mechanics problem whose origin can be traced back to a
question posed by Schr\"odinger \cite{Schr32} about the most likely
behaviour of a cloud of diffusive particles conditionally on the
observation of its configuration at an initial and final time. We refer to \cite{LeoSch,
  Marcel:notes} for surveys on SP and \ref{EOT}.
Both SP and \ref{EOT} stand nowadays at the crossroads of different research areas in both pure and 
applied mathematics including stochastic optimal control
\cite{chen2016relation,chen2021stochastic,mikami2006duality}, geometry
of optimal transport and functional inequalities
\cite{backhoff2020mean,conforti2018around,conforti2019second,fathi2019proof,gentil2020dynamical,gentil2020entropic,pal2020multiplicative},
abstract metric spaces
\cite{GigTam19,GigTam21,monsaingeon2020dynamical}, numerics for PDEs
and gradient flows
\cite{benamou2015iterative,BenamouEtAl19,benamou2021optimal,peyre2015entropicapprox}.

Recently, optimal transport and its entropic version have drawn
interest in statistics and machine learning \cite{peyre2019computational} and find a wealth of
applications such as inverse problems 
\cite{adler2017learning}, classification \cite{kusnerb15},
generative modeling
\cite{de2021diffusion,arjovsky2017wasserstein}, computer vision
\cite{dominitz2009texture,solomon2015convolutional} and signal
processing \cite{kolouri2017optimal}. However, in most cases, the numerical resolution of OT is challenging while  \ref{EOT}  turns to be simpler to tackle computationally for moderately small parameter
$T$ and  its solutions are used as proxies for solutions of OT \cite{cuturi2013sinkhorn,peyre2019computational}. 
This is in particular justified by  \cite{Nutz2022potentials,pooladian2021entropic} which show convergence of solutions of  \ref{EOT}  to OT as $T \to 0$.
As just mentioned,  \ref{EOT}  is simpler to solve compared to OT mainly due to Sinkhorn's algorithm also called Iterative Proportional Fitting Procedure, which can be used at the basis of numerical schemes aiming to get approximate solutions for  \ref{EOT} .

To introduce Sinkhorn's algorithm, that is the object of study of this work, we
begin by recalling that under mild assumptions on the marginals
$\mu,\nu$ (see for instance \cite[Proposition 2.2]{lagg2022gradient}),
\eqref{EOT} admits a unique minimizer $\pistarT\in\Pi(\mu,\nu)$,
referred to as the Schr\"odinger plan, and there exist two functions
$\varphi^\star \in \mathrm{L}^1(\mu)$ and
$\psi^\star \in \mathrm{L}^1(\nu)$, called Schr\"odinger
potentials, such that
\be\label{eq:sch_potentials}
\pistarT(\De x\De y)=(2\pi
T)^{-d/2}\,\exp\biggl(-\frac{|x-y|^2}{2T}-\varphi^\star(x)-\psi^\star(y)\biggr)\De
x\,\De y\eqsp.
\ee
Moreover, the couple
$(\varphi^\star,\psi^\star)$ is unique up to constant translations
$a\mapsto (\varphi^\star+a,\psi^\star -a)$.  If we suppose that the
marginals admit densities of the form
\be\label{log:marginals}
\mu(\De
x)=\exp(-U_\mu(x))\De x\,,\qquad\nu(\De x)=\exp(-U_\nu(x))\De x\eqsp,
\ee
then, imposing that a noisy transport plan of the form \eqref{eq:sch_potentials} belongs to $\Pi(\mu,\nu)$ one finds that $\varphistar,\psistar$ solve  the
Schr\"odinger system
\be\label{schrodinger:system}
   \begin{cases}
   \varphistar= U_\mu + \log P_T \exp(-\psistar)\\
   \psistar= U_\nu + \log P_T \exp(-\varphistar)\eqsp,
   \end{cases}
\ee
where $(P_t)_{t\geq 0}$ is the Markov semigroup generated by the standard $d$-dimensional Brownian motion $(B_t)_{t \geq 0}$: $P_tf(x) = \PE[f(x+B_t)]$ for any non-negative measurable function $f:\rset^d\to \rset$.

Then, starting from a given initialization $\psi^0,\varphi^{0}\colon\bbRD\to\rset$, Sinkhorn's algorithm solves \eqref{schrodinger:system} as a fixed point problem by generating two sequences of potentials $\{\varphi^n,\psi^n\}_{n \in\nset}$, called Sinkhorn potentials according to the following recursive scheme:
\be\label{eq:sinkhorn_iterate}
   \begin{cases}
   \varphi^{n+1}= U_\mu + \log P_T \exp(-\psi^{n})\\
   \psi^{n+1}= U_\nu + \log P_T \exp(-\varphi^{n+1})\eqsp.
   \end{cases}
\ee

The main goal of the present paper is to provide a new convergence analysis for Sinkhorn iterates to the Schr\"odinger potentials $\varphistar$ and $\psistar$, and hence the optimal plan $\pistarT$.
When working on discrete spaces, convergence of the algorithm is well-known and we refer to the book \cite{peyre2019computational} for an extensive overview.
However, let us mention that in the discrete setting, convergence of Sinkhorn's algorithm  dates back at least to \cite{Sinkhorn64} and \cite{SinkhornKnopp67}. More recently, \cite{Franklin89hilbert,borwein1994entropy} show that Sinkhorn iterates are equivalent to a sequence of iterates associated to an appropriate contraction in the Hilbert projective metrics, therefore proving the convergence of the algorithm boils down to studying a fixed-point problem, whose (exponential) convergence can be deduced from Birckoff's Theorem.
In the case of continuous state spaces, the use of Birckoff's Theorem for the Hilbert metrics has also been employed by \cite{chen2016hilbertmetric}, in order to establish the exponential convergence of Sinkhorn's algorithm under the condition that the state space is compact or that the cost function is bounded.  

When considering non-compact continuous state spaces and unbounded costs, results are scarcer; see  \Cref{sec:overview} for a  detailed literature overview. In addition, to
the best of our knowledge and understanding, exponential convergence of Sinkhorn iterates for  unbounded costs and marginals $\mu,\nu$ with unbounded support is still an open problem. In this paper, we address this important question by showing convergence of Sinkhorn iterates and its derivatives in the landmark example of the quadratic cost for a large class of unbounded marginal distributions and large enough values of the regularisation parameter $T$. 

More precisely, our  first main contribution is to establish exponential $\mathrm{L}^1$-convergence bounds for the gradients of Sinkhorn iterates $\{\nabla \varphi^n, \nabla \psi^n\}_{n\in\nsets}$ to the gradients of the Schrödinger potentials $\nabla \varphistar$ and $\nabla \psistar$. 
Our second main contribution is to prove quantitative exponential convergence of Sinkhorn's plans towards the Schr\"odinger plan in Wasserstein distance and, under more stringent assumptions, also in relative entropy.  In contrast to our result, previous works only established linear convergence rates in the unbounded cost setting, see \cite{ghosal2022nutz}.

\medskip

\noindent\textbf{Organisation.} The rest of the paper is organized as follows. We start \Cref{sec:ass_and_res} introducing Sinkhorn's algorithm in more detail, then we state informally our main results in \Cref{sec:main:results} where we also explain our proof strategy and our main assumptions. We conclude the introductory material with \Cref{sec:overview} which is devoted to a review of the existing literature on the convergence of Sinkhorn's algorithm. Then in \Cref{sec:gio} we analyse the (weak) convexity propagation along Sinkhorn's algorithm. That allows to state and prove our main results in the strongly convex regime in \Cref{sec:proof:strong} and in the weakly convex regime in \Cref{sec:proof:weak}. Important tools that are employed in the weak setting and which are based on the coupling by reflection technique are postponed to \Cref{new:sec}. The contractive results proven in \Cref{new:sec} are stated in full generality since they are of independent interest. 
  Finally, in \Cref{technicalities} we have postponed the proof of some technical results employed in our proofs.

\medskip
\noindent
\textbf{Notation.}
The set $\rset^d$ is endowed with the standard Euclidean metric and we denote by $\abs{\cdot}$ and $\ps{\cdot}{\cdot}$ the corresponding norm and scalar product.  For $a \in\rset$, we denote by $a_+= 0 \vee a$ and $a_{-} = 0 \vee (-a)$.  We denote by $\mcb(\rset^d)$ the Borel $\sigma$-field of $\rset^d$, by $\cP(\rset^d)$ the space of probability measures defined on $(\rset^d,\mcb(\rset^d))$ and by $\cP_2(\rset^d)$ the subset of probability measures with finite second moment. 
 For any probability measure $\mu\in\cP(\bbRD)$ we denote with $M_k(\mu) = \int \abs{x}^k \rmd \mu(x)$ its $k^{th}$ moment and define its covariance matrix as $\mathrm{Cov}(\mu)\coloneqq \int xx^{\sf T}\De \mu-(\int x \De\mu)(\int x \De\mu)^{\sf T}$. 
For two distributions $\mu_1,\mu_2$ on $(\rset^d,\mathcal{B}(\rset^d))$, 
$\Pi(\mu_1,\mu_2)$ denotes the set of couplings between $\mu_1$ and $\mu_2$, \ie , 
  $\xi\in\Pi(\mu_1,\mu_2)$ if and only if $\xi$ is a probability measure on $\bbRD\times\bbRD$ and $\xi(\msa \times \rset^d) = \mu_1(\msa)$ and $\xi(\rset^d\times \msa) = \mu_2(\msa)$.  On $\bbRD\times\bbRD$ we consider the projection operators $\mathrm{proj}_x(a,b)=a$ and $\mathrm{proj}_y(a,b)=b$. For any couple of functions $f,g\colon\bbRD\to\rset$ let us denote by $f\oplus g\colon\rset^{2d}\to\rset$ the function defined as $f\oplus g(x,y)\coloneqq f(x)+g(y)$. For $f :\rset^d\to\rset^d$ measurable and $\mu \in\mcp(\rset^d)$, we denote by $f_{\sharp} \mu$ the pushforward measure of $\mu$ by $f$: $f_{\sharp} \mu(\msa) = \mu(f^{-1}(\msa))$ for any $\msa \in \mcb(\rset^d)$.
For any function $f\colon\bbRD\to\rset$, denote by $\norm{f}_\infty\coloneqq \sup_{x\in\bbRD}|f(x)|$ its supremum norm.
Denote by $\rm\mathrm{L}^1 (\mu)$ the set of functions integrable with respect to $\mu$. For two (non-negative) measures $\lambda_1,\lambda_2$, we define the relative entropy (or Kullback Leibler divergence) between $\lambda_1$ and $\lambda_2$ as  $\scrH(\lambda_1 \mid \lambda_2)= \int_{\rset^d}  \log(\De \lambda_1/\De \lambda_2)\De \lambda_1$ if $\lambda_1 \ll \lambda_2$ and $\int_{\rset^d}  [\log(\De \lambda_1/\De \lambda_2)]_{-}\De \lambda_1 < \plusinfty$, and $\scrH(\lambda_1 \mid \lambda_2)= +\infty$ otherwise. We denote by $\Leb$ the Lebesgue measure and  define $\Ent(\lambda_1) = \int  \log (\rmd \lambda_1 / \rmd \Leb) \rmd \lambda_1$ if  $\lambda_1 \ll \Leb$ and $+\infty$ otherwise. Finally, for $p\in\{1,2\}$ we denote with $\bfW_p$ the Wasserstein distance of order $p$ which is defined for any $\lambda_1,\lambda_2\in\cP(\bbRD)$ as $\bfW_p(\lambda_1,\lambda_2)\coloneqq (\inf_{\pi\in\Pi(\lambda_1,\lambda_2)}\int|x-y|^p\De\pi(x,y))^{\nicefrac1p}$.

\section{Sinkhorn's algorithm and its geometric convergence}\label{sec:ass_and_res}

To gain a deeper understanding and build intuition on how Sinkhorn's algorithms works, it is worth recalling some basic results on the Schr\"odinger problem: this interlude allows us to naturally introduce objects and concepts that will be used extensively throughout our convergence analysis. 
The Schr\"odinger problem (SP) associated with  \ref{EOT} , is an equivalent formulation of  \ref{EOT}  inspired by Schr\"odinger's seminal work \cite{Schr32} about the behaviour of Brownian particles conditionally to some observations; see \cite[Sec. 6]{LeoSch} for a more detailed discussion on the statistical mechanics motivation behind Schr\"odinger's question.

Let  $\mu,\,\nu$ be  two probability measures on $\bbRD$ satisfying $\Ent(\mu) + \Ent(\nu) < \plusinfty$, where $\Ent$ denotes the relative entropy with respect to the Lebesgue measure. Given a horizon 
$T >0$,
SP consists in solving the optimization problem
\be\label{SP}\tag{SP}
\text{minimize }  \scrH(\pi|\rmR_{0,T}) \text{ under the constraint } \pi\in\Pi(\mu,\nu)\eqsp,
\ee
where $\scrH(\pi|\rmR_{0,T})$ is the relative entropy between the coupling $\pi$ and $\rmR_{0,T}$ defined by
\begin{equation*}
\mathrm{R}_{0,T}(\De x\De y)=(2\pi T)^{-d/2}\,\exp\biggl(-\frac{|x-y|^2}{2T}\biggr)\De x\,\De y\eqsp.
\end{equation*}
 Indeed, the identity 
\bes
\begin{split}
T\scrH(\pi|\rmR_{0,T})-&T\Ent(\mu)-T\Ent(\nu)-\frac{dT}{2}\log(2\pi T)\\
=&\int \frac{|x-y|^2}{2}\De\pi(x,y)+T\scrH(\pi|\mu\otimes\nu)\, \text{ for any}\eqsp\pi\in\Pi(\mu,\nu)
\end{split}\ees
immediately implies the equivalence between \ref{SP} and     \ref{EOT}  .

Based on the formulation \eqref{SP}  it has been pointed out \cite{benamou2015iterative} that Sinkhorn's algorithm is a special case of the Bregman's iterated projection algorithm for relative entropy. Indeed, in the current setup Bregman's iterated projection algorithm produces two sequence of plans $(\pi^{n,n},\pi^{n+1,n})_{n\in\nsets}$ starting from a positive measure $\pi^{0,0}$ according to the following recursion:
\be
\label{eq:primal_pb}
\pi^{n+1,n}\coloneqq{\arg\min}_{\Pi(\mu,\star)} \scrH(\cdot|\pi^{n,n})\,, \qquad\pi^{n+1,n+1}\coloneqq {\arg\min}_{\Pi(\star,\nu)} \scrH(\cdot|\pi^{n+1,n})\,,
\ee
where $\Pi(\mu,\star)$ (resp. $\Pi(\star,\nu)$) is the set of probability measures $\pi$ on $\mathbb{R}^{2d}$ such that the first marginal is $\mu$, \ie, $(\mathrm{proj}_x)_{\sharp}\pi = \mu$  (resp. the second marginal is $\nu$, \ie, $(\mathrm{proj}_y)_{\sharp}\pi = \nu$).
It is relatively easy to show (see \cite[Section 6]{Marcel:notes}), starting from $\pi^{0,0}(\rmd x \rmd y) \propto \exp(-\psi^{0}(y) - \varphi^{0}(x)) \rmd x \rmd y$, that the  iterates in \eqref{eq:primal_pb} are related to Sinkhorn potentials through 
\begin{align}
  \label{eq:sp:mu}
\txts \pi^{n+1,n}(\De x \De y)& \txts \propto\exp(-\nicefrac{|x-y|^2}{2T}-\varphi^{n+1}(x)-\psi^n(y)) \rmd x \rmd y \eqsp,  \\ \label{eq:sp:nu}
\txts   \pi^{n+1,n+1}(\De x\De y) & \txts \propto\exp(-\nicefrac{|x-y|^2}{2T}-\varphi^{n+1}(x)-\psi^{n+1}(y))\rmd x \rmd y \eqsp.
\end{align}
In the sequel, we will refer to the couplings $(\pi^{n,n},\pi^{n+1,n})_{n\in\nsets}$ as Sinkhorn plans.

\subsection{Main results and proof strategy}\label{sec:main:results}
In order to prove the exponential convergence of Sinkhorn's algorithm, we have investigated how the gradients of Sinkhorn's iterates behaves along the algorithm. Particularly, we have relied on a  well known result, see \cite[Proposition 2]{pooladian2021entropic} or \cite{chewi2022entropic} for instance, that links gradients (and hessians) along Hamilton-Jacobi-Bellman equations with conditional expectations, which states that
\be\label{eq:expl:grad}
\begin{aligned}
\nabla \log P_T\rme^{-h}(x) =&\, T^{-1}\int (y-x)\, \pi_T^{x,h}(\De y)\eqsp,\\
\nabla^2 \log P_T\rme^{-h}(x) =&\, -T^{-1}\operatorname{Id}+ T^{-2}\,\mathrm{Cov}(\pi_T^{x,h})\eqsp,
\end{aligned}
\ee
where 
 $(x,A)\mapsto \pi_T^{x,h}(A)$ is defined as the Markov kernel on $\bbRD\times \cB(\bbRD)$ whose transition density w.r.t. Lebesgue  measure is proportional to $(x,y) \mapsto \exp(-h(y)-|x-y|^2/(2T))$, \ie, for any $x\in\rset^d$, $\pi^{x,h}_T$ is defined through
    \be\label{eq:cond_distr}
  \pi_T^{x,h}(\De y) \propto \exp\biggl(-\frac{|y-x|^2}{2T}-h(y)\biggr) \De y \eqsp.
\ee

The proof of this technical result is given for completeness in  \Cref{sec:proof-crefprop:cov}.

 \medskip
 
This suggests that, as soon as we aim to bound the difference between gradients in $p$-norm (for $p\in\{1,\,2\}$) 
\be\label{eq:rel:norm:wass:intro}
\begin{aligned}
\|\nabla \varphi^{n+1} - \nabla\varphistar\|^p_{\rmL^p(\mu)}=\|\nabla \log P_T e^{-\psi^n} - \nabla\log P_T e^{-\psistar}\|^p_{\rmL^p(\mu)}\\
\leq T^{-p}\,\int\bfW_p^p(\pi_T^{x,\psi^n},\pi_T^{x,\psistar})\eqsp \mu(\De x)\eqsp,
\end{aligned}\ee
we should investigate the Wasserstein distance between the conditional measures $\pi_T^{x,\psi^n}$ and $\pi_T^{x,\psistar}$. One way of estimating this distance is by relying on the (weak) convexity of their (negative) log-densities and therefore on the (weak) convexity of Sinkhorn's iterates and Schr\"odinger potentials (cf. \Cref{sec:gio}). This particularly suggests the geometric nature of the assumptions we impose on the marginals, that is we require that (weak) convexity holds for their (negative) log-densities.

For readers' convenience, we firstly state our assumptions and main results in the strongly log-concave regime and then generalise these in the weak setting.

\subsubsection{Strongly log-concave case}
Let us start by considering the strongly log-concave case, that is when the marginals $\mu,\,\nu$ (and their negative log-densities $U_\mu,\,U_\nu$, defined at~\eqref{log:marginals}) satisfy
\begin{assumptionA}\label{A:log:concave:doppio}
 There exist $\alpha_\nu,\,\alpha_\mu\in(0,+\infty)$ and $\beta_\mu,\,\beta_\nu\in(0,+\infty]$ such that 
    \be
   \alpha_\nu\leq \nabla^2 U_\nu\leq \beta_\nu\quad\text{ and }\quad\alpha_\mu\leq  \nabla^2 U_\mu\leq \beta_\mu\,,
    \ee
    where the above inequalities hold as quadratic forms, and we omit writing the identity matrix multiplied to the scalars $\alpha,\,\beta$.
\end{assumptionA}
Under this assumption the convexity/concavity of Schr\"odinger potentials has been deeply investigated \cite{chewi2022entropic, conforti2022quadratic}. In \Cref{sec:gio}, in a much further general setting, we study how this property propagates along Sinkhorn's algorithm and get the same known convexity estimates. By relying on these we prove our first main result.

    \begin{theorem}[Informal]\label{thm:W2}
    Assume \Cref{A:log:concave:doppio} and suppose $\varphi_0\equiv 0$. Then, for any 
    \be\label{T0:log:concave}
T>\frac{\beta_\mu\beta_\nu-\alpha_\mu\alpha_\nu}{\sqrt{\alpha_\mu\,\beta_\mu\,\alpha_\nu\,\beta_\nu\,(\alpha_\mu+\beta_\mu)(\alpha_\nu+\beta_\nu)}}
 \ee
    Sinkhorn's algorithm converges exponentially in $\rmL^2$, in Wasserstein $\bfW_2$ distance and in relative entropy $\scrH$ as well. More precisely, if we set 
\be\label{gamma:oo:log:concave}
\begin{aligned}
\gamma^\mu_\infty\coloneqq &\,2\biggl(\alpha_\mu+\sqrt{\alpha_\mu^2+4\alpha_\mu/(T^2\beta_\nu)}\biggr)^{-1}
\\
\gamma^\nu_\infty\coloneqq &\,2\biggl(\alpha_\nu+\sqrt{\alpha_\nu^2+4\alpha_\nu/(T^2\beta_\mu)}\biggr)^{-1}\,,
\end{aligned}
\ee
then for any $ T^{-2}\gamma_{\infty}^\mu\,\gamma_{\infty}^\nu < \lambda < 1$, there exists $C \geq 0$ such that for any $n \in\nsets$, 
\be\label{eq:W2:exp:conv:general}
\begin{aligned}
\|\nabla \varphi^{n} - \nabla\varphistar\|_{\rmL^2(\mu)} +
\|\nabla \psi^{n} - \nabla\psistar\|_{\rmL^2(\nu)}  \leq&\, C \lambda^n \|\nabla \psi^0 - \nabla\psistar\|_{\rmL^2(\nu)}    \eqsp,\\
\bfW_2(\pi^{n,n},\,\pi^\star) +
\bfW_2(\pi^{n+1,n},\,\pi^\star) \leq&\, C \lambda^n \|\nabla \psi^0 - \nabla\psistar\|_{\rmL^2(\nu)}    \eqsp,\\
\scrH(\pi^\star|\pi^{n,n}) +
 \scrH(\pistarT|\pi^{n+1,n})\leq&\, C^2 \lambda^{2n} \|\nabla \psi^0 - \nabla\psistar\|^2_{\rmL^2(\nu)} \,.
\end{aligned}
\ee
\end{theorem}

The above result follows from \Cref{thm:strong:conv}, which is proven in \Cref{sec:proof:strong}.

\begin{remark}
As $\beta_\mu=\beta_\nu=+\infty$, the exponential convergence condition \eqref{T0:log:concave} simply reads as $T>(\alpha_\mu\,\alpha_\nu)^{-1/2}$.  

Furthermore, let us point out here that in the isotropic Gaussian case, \ie, when considering Gaussian marginals $\mu=\mathcal{N}(m_\mu,\alpha_\mu\Id)$ and $\nu=\mathcal{N}(m_\nu,\alpha_\nu\Id)$ then \eqref{T0:log:concave} simply reads $T>0$\, that is we have shown the exponential convergence of Sinkhorn's algorithm for any regularising parameter $T>0$.
Let us further mention that the exponential convergence can actually be proven to hold for any $T>0$ when considering general (anisotropic) Gaussian marginals, \ie, for $\mu=\mathcal{N}(m_\mu,\Sigma_\mu)$ and $\nu=\mathcal{N}(m_\nu,\Sigma_\nu)$, by improving our propagation of convexity result. This will be done in a forthcoming work.
\end{remark}

\subsubsection{Weakly log-concave setting} The previous result can be proven for a much wider class of marginals. 
To state the assumptions we impose on marginal distributions, we introduce the integrated convexity profile
   $\kappa_U : \rsets_+ \to \rset$ of a differentiable function $U :\rset^d \to \rset$ as the  function
\bes
\kappa_U(r)\coloneqq \inf\biggl\{\frac{\langle\nabla U(x)-\nabla U(y),\,x-y\rangle}{|x-y|^2}\quad\colon\quad |x-y|=r\biggr\}\,.
\ees
Likewise, for a distribution $\zeta$ represented as a Gibbs measure as $\zeta(\rmd x) \propto \exp(-U) \rmd x$, we simply write $\kappa_{\zeta}$ for  $\kappa_{U}$ and referred to this function as to the integrated log-concavity profile of $\zeta$. The function $\kappa_U$ is often employed to quantify ergodicity of stochastic differential equations whose drift field is $-\nabla U$, see \cite{eberle2016reflection}. The term integrated convexity profile we coin here is motivated by the observation that for any $U\in\cC^2$ function we have  $\kappa_U(r) \geq \alpha$ if and only if for any $x,v \in\rset^d$, $\abs{v} = 1$, $\int_0^{r} \langle \nabla^2 U(x+h v)v,v \rangle \De h \geq \alpha r$. Henceforth, the strong convexity condition $\nabla^2U\geq\alpha$ implies $\kappa_U(r)\geq \alpha$. Conversely, if $U\in\cC^2$ and  $\kappa_U(r) \geq \alpha$ for all  $r>0$, then $U$ is strongly convex and $\nabla^2U\geq\alpha$.
The integrated concavity profile of $U$ is defined in a similar way as $\ell_U= -\kappa_{-U}$, and for $\zeta$ of the form $\zeta(\rmd x) \propto \exp(-U) \rmd x$ we set $\ellzeta = \ell_{U}$.
Our main results apply when the marginals $\mu,\nu$ satisfy the following property
for $\zeta \in \{\mu,\nu\}$,
\bes
\begin{split}
\liminf_{r\rightarrow +\infty} \kappazeta(r)>0 \eqsp, \quad
\liminf_{r\rightarrow 0} r\kappazeta(r)=0 \eqsp.
\end{split}
\ees

Below we are going to give a more precise and detailed assumption on marginals. In view of that, let  us consider two sets of functions that  will appear in our conditions:
\bes
\cG\coloneqq \left\{g\in\cC^2((0,+\infty),\,\rset_+)\, :\, \begin{aligned}&\,(r\mapsto r^{1/2}\,g(r^{1/2}))\text{ is non-decreasing and concave,}\\
&\,\lim_{r\downarrow0}r\,g(r)=0 \end{aligned}\right\}
\ees
  and its subset
\bes
\tmcg\coloneqq \biggl\{g\in\cG\eqsp\text{ bounded, s.t.}\quad\lim_{r\downarrow 0}g(r)=0\,,\eqsp\exists\,g{'}(0^+)<+\infty\,,\eqsp g{'}\geq 0\eqsp \text{ and }\eqsp 2g{''}+g\,g{'}\leq 0 \biggr\}\,.
\ees 
The above classes of functions are non-empty and in particular $\tmcg$ contains the function $r \mapsto 2\tanh(r/2)$.
Though it may not appear as the most natural at first sight, these choices will become clear in light of our proofs. Indeed, the sets $\cG$ and $\tmcg$ enjoy special invariance properties (see \Cref{gio2022:thm} and  \Cref{appo:lemma33}) under the mapping 
\be 
g \mapsto -\log P_T \exp(-g) \eqsp,
\ee 
upon which the proof of the lower bounds on the integrated convexity profiles of potentials (see \Cref{sec:gio}) are built.

We say that a potential $U : \rset^d\to \rset$ is asymptotically strongly convex  if  there exist $\alpha_{U} \in \rset_+^*$ and $\tilde{g}_{U} \in \tilde \cG$ such that 
\be\label{eq:condition_a_log_concave_def}
    \kappa_{U}(r)\geq \alpha_U-r^{-1}\,\tilde g_{U}(r)
\ee
holds for all $r\geq0$. We consider the set of asymptotically strongly log-concave probability measures
\be\label{eq:asympt_log_conc_def}
\mcpalcd=\{ \zeta(\De x) = \rme^{-U}\De x: \, U \in \cC^2(\rset^d), \, \text{$U$ asymptotically strongly convex} \}\,.
\ee
Note that as soon as $\zeta(\De x) = \exp(-U(x))\De x$ there exist $\beta_U\in(0,+\infty]$ and $g\in\cG$ such that 
\begin{equation}\label{eq:asymptot_log_conc_upper_bound}
\ell_{U}(r)\leq \beta_U+r^{-1}\,{g_U}(r)\eqsp 
\end{equation}
holds for all $r\geq0$. This is trivially true as we can choose $\beta_U=+\infty$. However, if the above holds for some $\beta_U<+
\infty$, we obtain better lower bounds on the integrated convexity profile of Sinkhorn's potentials and all contraction rates appearing in our main results are better than those obtained for $\beta_U=+\infty$.

\begin{remark}[Example of weakly log-concave measures]\label{rem:g_L}
Let us remark that the class of probability measures $\mcpalcd$ contains in particular probability measures $\zeta$ associated with potentials $U$ satisfying 
\be
\label{eq:rem_u}
    \kappa_U(r)\geq\begin{cases}
        \alpha \quad&\text{ if }r>R\\
        \alpha-C_U\quad&\text{ if }r\leq R\,,
    \end{cases}
    \ee
    for  $\alpha,C_U, R>0$. Indeed,  \cite[Proposition 5.1]{conforti2022quadratic} implies that $\kappa_{\zeta}(r)\geq \alpha-r^{-1}\eqsp {\tilde g}_L(r)$ where 
 ${\tilde g}_L\in\tmcg$ is given by 
    \begin{equation}\label{eq:tanh}
      {\tilde g}_L(r)\coloneqq2\,(L)^{1/2}\,\tanh(r\,L^{1/2}/2) \eqsp\text{ with }\quad L\coloneqq \inf\{\bar L\,\colon\,R^{-1}\,{\tilde g}_{\bar L}(R)\geq C_U \}\eqsp.
    \end{equation}

    Finally, it is worth mentioning that \eqref{eq:rem_u} holds if $U$ can be expressed as the sum of a strongly convex function and a Lipschitz function with second derivative bounded from below. This allows to cover not only potentials that are strongly convex outside a compact but also double-well and multiple-wells potentials such as $|x|^4-L|x|^2$.

\end{remark}

We have now all the concepts and  notations to introduce our assumptions.
\begin{assumptionA}\label{A:entropy}
  The marginals $\mu,\nu\in\cP(\bbRD)$ with log-densities $U_\mu,\,U_\nu$ (cf. \eqref{log:marginals}) 
belong to 
 the set $\mcpalcd$ defined at \eqref{eq:asympt_log_conc_def} and have finite relative entropy with respect to the  Lebesgue measure $\Leb$, $\Ent(\mu),\,\Ent(\nu)<+\infty$.
\end{assumptionA}

Under \Cref{A:entropy}, we denote by
$\alpha_{\mu},\beta_{\mu}$, $\tilde g_{\mu}$ and $ g_{\mu}$ (resp. $\alpha_{\nu},\beta_{\nu}$, $\tilde g_\nu$ and $g_{\nu}$) the constants and functions associated with $\mu$ (resp. $\nu$) such that  \eqref{eq:condition_a_log_concave_def} and \eqref{eq:asymptot_log_conc_upper_bound} hold for $U_{\mu}$ (resp. $U_{\nu}$).  
   We prove in \Cref{sec:gio} that the previous set of assumptions imply lower bounds on the integrated convexity profile of Sinkhorn and Schr\"odinger potentials. This result is at the core of our proof strategy and generalise the findings \cite{conforti2022quadratic}, that are valid for Schr\"odinger but not Sinkhorn potentials and restricted to the case when $g_{\mu}=g_{\nu}=0$ and $\tilde{g}_{\mu},\tilde{g}_{\nu}$ are of the form \eqref{eq:tanh}, see Remark \ref{rem:gen_gio} for more explanations.

Finally notice that a special case of \Cref{A:entropy} corresponds to the strongly log-concave case: indeed \Cref{A:log:concave:doppio} clearly implies the validity of \Cref{A:entropy} with $\tilde g_\mu$, $g_\mu$, $\tilde g_\nu$ and $g_\nu$ all null.

\medskip

Under \Cref{A:entropy} we can then generalise \Cref{thm:W2} as follows.

\begin{theorem}[Informal]
  \label{theo:1}
  Assume that \Cref{A:entropy} holds for the marginals $\mu,\nu$ specified by~\eqref{log:marginals}. In addition, suppose that $\varphi^0\equiv 0$.
Then, there exist $C \geq 0$, a time parameter $T_0\in[0,+\infty)$ (explicitly given at~\eqref{eq:suff:T}, see also~\eqref{eq:suff:T:simplified} for a simplified expression) and $\lambda \in (0,1)$ such that for $n \in\nset$ and $T>T_0$,
  \be\label{informal:convergence}\begin{aligned}
     \|\nabla \varphi^{n} - \nabla\varphistar\|_{\rmL^1(\mu)}  + \|\nabla \psi^{n} - \nabla\psistar\|_{\rmL^1(\nu)}  &\leq C \lambda^n\\
     \bfW_1(\pi^{n,n},\,\pi^\star)+\bfW_1(\pi^{n+1,n},\,\pi^\star)&\leq  C \lambda^n
  \end{aligned}\ee
  where $\{(\varphi^n,\psi^n)\}_{n\in\nset}$ and
  $\{(\pi^{n+1,n},\pi^{n,n})\}_{n\in\nset}$ are defined in \eqref{eq:sinkhorn_iterate} and \eqref{eq:sp:mu}-\eqref{eq:sp:nu} respectively.
\end{theorem}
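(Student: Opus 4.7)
The approach hinges on the explicit formula~\eqref{eq:expl:grad}, which allows one to express each Sinkhorn update as a conditional expectation. Indeed, combining~\eqref{eq:sinkhorn_iterate} with~\eqref{eq:expl:grad} yields
\[
\nabla \varphi^{n+1}(x) - \nabla \varphistar(x) = T^{-1}\int y\,\bigl(\pi_T^{x,\psi^n}(\De y) - \pi_T^{x,\psistar}(\De y)\bigr),
\]
and since the mean is $1$-Lipschitz in $\bfW_1$, one gets $|\nabla \varphi^{n+1}(x) - \nabla \varphistar(x)| \leq T^{-1}\bfW_1(\pi_T^{x,\psi^n},\pi_T^{x,\psistar})$; the symmetric bound holds for $\psi$. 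The initialization $\varphi^0\equiv 0$ forces $\psi^1=U_\nu$, which is asymptotically strongly log-concave by \Cref{A:entropy}, and the propagation-of-convexity results of \Cref{sec:gio} then guarantee that $\varphi^n,\psi^n,\varphistar,\psistar$ all admit asymptotically strictly positive integrated convexity profiles, uniformly in $n$. Since the negative log-density in $y$ of $\pi_T^{x,\psi}$ is $|y-x|^2/(2T)+\psi(y)$ up to an additive constant, its integrated convexity profile is bounded below by $T^{-1}+\kappa_\psi$. Feeding these uniform lower bounds into the coupling-by-reflection contractive estimates of \Cref{new:sec} produces a perturbation inequality of the form
\[
\bfW_1(\pi_T^{x,\psi^n},\pi_T^{x,\psistar}) \leq \rmC_T^{\nu}\int |\nabla \psi^n(y) - \nabla \psistar(y)|\,\pi_T^{x,\psistar}(\De y),
\]
with $\rmC_T^\nu$ an explicit constant depending on $T$ and the convexity profile of $\psistar$.

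Integrating this bound against $\mu(\De x)$ and using that $\pistar(\De x\,\De y)=\mu(\De x)\,\pi_T^{x,\psistar}(\De y)$ belongs to $\Pi(\mu,\nu)$, so that $\int \pi_T^{x,\psistar}(\De y)\,\mu(\De x)=\nu(\De y)$, yields the key one-step estimate
\[
\norm{\nabla \varphi^{n+1} - \nabla \varphistar}_{\rmL^1(\mu)} \leq T^{-1}\rmC_T^{\nu}\,\norm{\nabla \psi^n - \nabla \psistar}_{\rmL^1(\nu)}.
\]
The same reasoning applied to the $\psi$-update of~\eqref{eq:sinkhorn_iterate} produces $\norm{\nabla \psi^{n+1} - \nabla \psistar}_{\rmL^1(\nu)} \leq T^{-1}\rmC_T^{\mu}\,\norm{\nabla \varphi^{n+1} - \nabla \varphistar}_{\rmL^1(\mu)}$, and composing the two yields a geometric contraction with rate $\lambda:=T^{-2}\rmC_T^{\mu}\rmC_T^{\nu}$. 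The threshold $T_0$ in~\eqref{eq:suff:T} is precisely the value above which $\lambda<1$, and iterating the contraction delivers the $\rmL^1$-exponential convergence of the gradients.

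The Wasserstein bounds on the plans follow from the same pointwise estimate. Since $\pi^{n+1,n}$ and $\pistar$ share the first marginal $\mu$, gluing them through their $x$-conditionals yields $\bfW_1(\pi^{n+1,n},\pistar) \leq \int \bfW_1(\pi_T^{x,\psi^n},\pi_T^{x,\psistar})\,\mu(\De x) \leq \rmC_T^{\nu}\norm{\nabla \psi^n - \nabla \psistar}_{\rmL^1(\nu)}$, which decays geometrically by the previous step. A symmetric argument using the shared second marginal $\nu$ of $\pi^{n,n}$ and $\pistar$ together with the conditional measures $\pi_T^{y,\varphi^n},\pi_T^{y,\varphistar}$ gives the analogous decay for $\bfW_1(\pi^{n,n},\pistar)$. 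The main technical obstacle lies entirely in the perturbation estimate of the first paragraph with sharp enough constants: in the weakly log-concave regime Bakry-Emery arguments are unavailable, and the tanh-like corrections $\tilde g_\mu,\tilde g_\nu$ together with the upper-bound corrections $g_\mu,g_\nu$ must be carefully propagated through Sinkhorn's recursion in \Cref{sec:gio} and then converted into reflection-coupling contraction constants in \Cref{new:sec}. This interplay is what ultimately determines the explicit threshold~\eqref{eq:suff:T}.
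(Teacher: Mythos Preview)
Your proposal is correct and follows the paper's strategy exactly: the gradient formula~\eqref{eq:expl:grad}, the reflection-coupling perturbation bound (your displayed inequality is precisely \Cref{lemma:expl:sticky}), integration against $\mu$ via~\eqref{appo23} to obtain the one-step estimates~\eqref{it2.1}--\eqref{it2.2}, composition of the two half-steps to get contraction with rate $T^{-2}\gamma^\mu_\infty\gamma^\nu_\infty$, and the conditional gluing argument for the $\bfW_1$ bounds on the plans. The only slip is that $\varphi^0\equiv 0$ yields $\psi^0=U_\nu$ (up to an additive constant) rather than $\psi^1=U_\nu$, but this is inessential since \Cref{initial:assumption} guarantees the required lower bound on $\kappa_{\psi^1}$ after one iteration regardless of the initialization.
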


A more quantitative version of this result is given at \Cref{thm:integrated:convergence}. Let us just mention here that the expressions simplify when considering the \emph{convex outside a compact} setting.
\begin{remark}[Results for marginals satisfying \eqref{eq:rem_u}]\label{rem:g_L:2}
If for any $\frp\in\{\mu,\nu\}$ we let $\tilde g_\frp$ and $L_\frp\geq 0$  be defined as in \eqref{eq:tanh} in \Cref{rem:g_L}, \ie 
\bes
\tilde g_\frp(r)=2\,(L_\frp)^{1/2}\,\tanh(r\,L_\frp^{1/2}/2)
\ees
then Sinkhorn's algorithm converges exponentially fast (as in \Cref{theo:1}) for any 
\be\label{eq:suff:T:simplified}
    T^2>\frac{\cosh^4\biggl(\frac{2}{L_\mu^{1/2}}+\frac{4\,L_\mu^{1/2}}{\alpha_\mu}\biggr)}{\alpha_\mu+L_\mu}\eqsp\frac{\cosh^4\biggl(\frac{2}{L_\nu^{1/2}}+\frac{4\,L_\nu^{1/2}}{\alpha_\nu}\biggr)}{\alpha_\nu+L_\nu}\eqsp.
    \ee
Moreover the exponential convergence rate $\lambda$ appearing in \eqref{informal:convergence} in \Cref{theo:1} can be taken up to (excluded)
\bes
 \lambda >T^{-2}\, \gamma_\infty^\mu\gamma_\infty^\nu\eqsp,
\ees
where
\bes
\begin{aligned}
    \gamma_\infty^\mu= &\eqsp\frac{1}{\alpha_{\varphistar}+T^{-1}+L_\mu}\eqsp\cosh^4\biggl(2(L_\mu)^{1/2}\biggl(\frac{1}{L_\mu}+\frac{2}{\alpha_{\varphistar}+T^{-1}}\biggr)\biggr)\eqsp,\\
     \gamma_\infty^\nu= &\eqsp\frac{1}{\alpha_{\psistar}+T^{-1}+L_\nu}\eqsp\cosh^4\biggl(2(L_\nu)^{1/2}\biggl(\frac{1}{L_\nu}+\frac{2}{\alpha_{\psistar}+T^{-1}}\biggr)\biggr)\eqsp,
    \end{aligned}
    \ees
    with $\alpha_{\varphistar},\,\alpha_{\psistar}$ being the weak convexity profiles of the Schr\"odinger potentials (see \Cref{gio2022:thm} below).
\end{remark}

\medskip

It is worth mentioning that the convergence appearing in \Cref{theo:1} holds in $\bfW_1$ and not in relative entropy or in $\bfW_2$. This is due to the fact that our proof approach in the strongly log-concave setting relies on convexity functional inequalities and synchronous couplings. In the weakly log-concave setting this could not be straightforwardly applied and synchronous couplings should be replaced by reflection couplings  \cite{eberle2016reflection}. This explains why the convergence bounds appearing in \Cref{theo:1} appear with $\rmL^1$-norms and $\bfW_1$-distances. Nevertheless, a different strategy approach can be employed in order to prove convergence in relative entropies. Indeed by generalising the coupling by reflection approach employed in \Cref{theo:1} we are able to prove pointwise exponential convergence bounds for Sinkhorn's iterates $\varphi^n,\,\psi^n$ and then use the latter bounds for proving the exponential convergence of Sinkhorn's algorithm in (symmetric) relative entropies, though the rate of convergence $\lambda$, constant $C$ and threshold-condition $T>T_0$ clearly deteriorate. For sake of exposition these results are not presented here, but we refer the interested reader to \cite[Chapter 6]{greco2024thesis} where our approach is further generalised in order to prove the exponential convergence of the hessians along Sinkhorn's algorithm.

\medskip

   \subsubsection{Proof strategy}
For sake of clarity we sketch here our proof strategy for the case $p=1$, but the strategy for $p=2$ follows the same reasoning. 
   As we have already noticed in~\eqref{eq:rel:norm:wass:intro}, we should bound the Wasserstein distance between conditional measures since
   \bes
\begin{aligned}
\|\nabla \varphi^{n+1} - \nabla\varphistar\|_{\rmL^1(\mu)}
\leq T^{-1}\,\int\bfW_1(\pi_T^{x,\psi^n},\pi_T^{x,\psistar})\eqsp \mu(\De x)\eqsp.
\end{aligned}\ees
This can be done by combining together  the lower bounds on the integrated convexity profiles $\kappa_{\psi^n}$ (see \Cref{sec:gio}) and coupling techniques (see \Cref{new:sec}), since the conditional distribution $\pi_T^{x,h}$ (with $h\in\{\psi^n,\,\psistar\}$) can be seen as the invariant measure for the SDE 
\bes
\De Y_t=-\biggl(\frac{Y_t-x}{2T}+\frac12\nabla h(Y_t)\biggr)\De t+\De B_t\,.
\ees 
Then, we obtain in \Cref{lemma:expl:sticky} that for some $\gamma^{\nu}_n>0$, for any $x \in \mathbb{R}^d$,
\begin{equation*}
\bfW_1(\pi_T^{x,\psi^n},\pi_T^{x,\psistar})\leq \gamma^{\nu}_{n}\,\int|\nabla\psi^n-\nabla\psistar|(y)\eqsp\pi^{x,\psistar}_T(\De y) \, .
\end{equation*}
 Since $\mu(\De x)\otimes \pi^{x,\psistar}_T(\De y)=\pistarT(\De x\De y)\in\Pi(\mu,\nu)$, 
integrating this last estimate w.r.t.~$\mu$ finally yields to
\be\label{it2.1}
\int|\nabla \varphi^{n+1} - \nabla\varphistar|(x)\eqsp\mu(\De x) \leq \frac{\gamma^{\nu}_{n}}{T}\int |\nabla\psi^n-\nabla\psistar|(y)\nu(\De y) \eqsp.
\ee
Repeating the same argument but exchanging the roles of $\psi^n,\psistar$ and $\varphi^n,\varphistar$ we obtain
\be\label{it2.2}
\int|\nabla \psi^{n} - \nabla\psistar|(y)\eqsp\nu(\De y) \leq \frac{\gamma^{\mu}_{n-1}}{T}\int |\nabla\varphi^n-\nabla\varphistar|(x)\mu(\De x)
\ee
for some $\gamma^{\mu}_{n-1}>0$. Combining \eqref{it2.2} with \eqref{it2.1} yields
\be\label{appo:sabato31}
\|\nabla \varphi^{n+1} - \nabla\varphistar\|_{\rmL^1(\mu)}
\leq \frac{\gamma^{\nu}_{n}\gamma^{\mu}_{n-1}}{T^2} \|\nabla \varphi^{n} - \nabla\varphistar\|_{\rmL^1(\mu)}
\ee
which is a contraction provided $T^{-2}\gamma^{\nu}_{n}\gamma^{\mu}_{n-1}<1$. Therefore, provided this condition holds for $n$ large enough, iteratively applying~\eqref{appo:sabato31} allows us to establish the exponential convergence of the algorithm.

\subsection{Literature review}\label{sec:overview}

As already emphasised in the introduction, when the marginal distributions $\mu,\nu$ are discrete probability measures, exponential convergence rates for Sinkhorn's algorithm are well-known \cite{Franklin89hilbert,borwein1994entropy}.
In addition, its exponential convergence has also been established when dealing with  compact state spaces or bounded cost functions \cite{chen2016hilbertmetric,deligiannidis2021quantitative,berman2020sinkhorn}.

 When considering non compact spaces with possibly unbounded costs and unbounded marginals, the techniques developed to handle the discrete setting cannot apply as such and new ideas have emerged.  When it comes to results that allow for unbounded costs,  \cite{ruschendorf1995convergence} shows qualitative convergences of iterates in relative entropy and total variation for Sinkhorn plans. More recently, \cite{Marcel:stab:potentials} establishes qualitative convergence both on the primal and dual sides under mild assumptions. Concerning convergence rates,  \cite{leger2021gradient} gives an interpretation of Sinkhorn's algorithm as a block coordinate descent on the dual problem and obtains convergence of marginal distributions in relative entropy at a linear rate $n^{-1}$ under minimal assumptions. \cite{eckstein2021quantitative}  derives polynomial rates of convergence in Wasserstein distance assuming, among other things, that marginals admit exponential moments. Lastly,  \cite{ghosal2022nutz} improves existent polynomial convergence rates for optimal plans with respect to a symmetric relative entropy.

In contrast to these existing works, the main contribution of this paper is to establish exponential convergence bounds for the gradients of Sinkhorn iterates as well as for the optimal plans. To the best of our knowledge these findings are new both in their dual and primal formulation in that they represent the first exponential convergence results that holds for unbounded costs and marginals. They also are among the very few results that yield convergence of derivatives of potentials. On this subject, let us mention the recent work by the authors of the present paper and collaborators \cite{greco2023SinkhornTorus} where the state space is the $d$-dimensional torus and therefore deals with bounded cost functions, and the thesis \cite{greco2024thesis} where these results are generalised to the Euclidean unbounded space.  As  \cite{greco2024thesis,greco2023SinkhornTorus}, our proofs are mainly probabilistic, and differ from other proposed methodologies in that they rely on one-sided integrated semiconvexity estimates for potentials along Sinkhorn iterates. These estimates are by themselves a new result, that has potentially several further implications. Though the current approach and the one we proposed in \cite{greco2024thesis,greco2023SinkhornTorus} are both inspired by coupling methods and stochastic control, there is a fundamental difference. In \cite{greco2024thesis,greco2023SinkhornTorus} exponential convergence is achieved through Lipschitz estimates on potentials, and in the unbounded setting in \cite{greco2024thesis} both marginals are log-Lipschitz perturbation of the same log-concave reference measure.
In the current setup, we make assumptions on the integrated log-concavity profile of the marginals; these assumption are of geometric nature and not perturbative.  

We conclude this section reporting on results available for  multimarginal entropic optimal transport problem and the natural extension  of Sinkhorn's algorithm in this setting. For bounded costs and marginals, (or equivalently compact spaces)  \cite{carlier2020differential} shows well-posedness of the Schr\"odinger system and smooth dependence of Schr\"odinger potentials on the marginal inputs.  \cite{marinogerolin2020} managed to show qualitative convergence of Sinkhorn iterates to Schr\"odinger potentials in $\mathrm{L}^p$-norms using tools from   calculus of variations: their results require bounded costs but apply to multimarginal problems. These results have been subsequently improved by Carlier \cite{Carlier22multisink} who establishes exponential convergence.

Lastly, we should also mention that, after \cite{greco2023SinkhornTorus} and the results presented here appeared, very recently \cite{eckstein2023hilberts} has managed to extend the approach based on Hilbert's projective metric for unbounded settings in the general EOT problem setting, solely for marginals satisfying a light-tail condition. Though their result applies to general EOT problems for any regularising parameter $\varepsilon>0$, still in the quadratic cost setting they can't cover marginals satisfying \Cref{A:log:concave:doppio}, leaving out the landmark example of quadratic cost with Gaussian marginals (setting that on the contrary we cover for any regularising parameter $T>0$ here). Moreover, dealing with general costs prevents \cite{eckstein2023hilberts} from getting convergence results for the gradients of Sinkhorn's iterates, which is of particular interest since the gradients of Schr\"odinger potentials provide proxies for the Optimal Transport map (c.f.~\cite{lagg2022gradient}).

While this paper was under review, several new contributions has tackled the convergence rates of Sinkhorn’s algorithm. Particularly, in \cite{chizat2024sharper} the authors have shown in the bounded setting how geometric properties of marginals improve the convergence rates' dependence in $T$, from exponential to polynomial. The same polynomial dependence has been shown in \cite{chiarini2024semiconcavity} by the first and third authors of this paper (and coauthors) for a broader class of \ref{EOT} problems (including the landmark example of quadratic cost with unbounded marginals).  Lastly, in \cite{greco2025hessian} similar geometric ideas were developed in order to prove second order exponential convergence  results for Sinkhorn iterates. We refer the reader to \cite{chiarini2024semiconcavity, greco2025hessian} for more extensive comparison.

\section{Integrated semiconvexity propagation along Sinkhorn}\label{sec:gio}

In this section we establish lower bounds on the integrated convexity profile of Sinkhorn potentials. Before proceeding further, let us notice a few useful properties satisfied from the elements of $\tmcg$ and that naturally follow from its definition.
\begin{remark}\label{remark:propG}
The properties prescribed in the definition of $\tmcg$ in particular imply that its elements are concave functions on $(0,+\infty)$ and that $\sup_{r>0}g(r)/r<+\infty$ (or equivalently, that there exists some $G>0$ such that $g(r)\leq G\,r$, uniformly in $r>0$). Indeed, any $g\in\tmcg$ is clearly non-negative and non-decreasing, which combined with the differential inequality  $2g{''}+g\,g{'}\leq 0$, implies its concavity. Finally,  since $g(0^+)=0$ and $g\in\tmcg$ is bounded twice differentiable in $(0,+\infty)$ and $g{'}(0^+)$ exists and is finite,  we may also conclude that $\sup_{r>0}g(r)/r<+\infty$.
\end{remark}

Moreover, since in the following results the elements of $\tmcg$ will appear, let us rewrite here our standing assumption  by making explicit what means that the marginals are in  $\mcpalcd$. Henceforth let us rewrite \Cref{A:entropy} as
 \setcounter{assumptionAp}{1}
 \begin{assumptionAp}\label{A:kappatilde}
The two distributions $\mu,\nu\in\cP(\bbRD)$ specified by \eqref{log:marginals}  have finite relative entropy with respect to the  Lebesgue measure $\Leb$, $\Ent(\mu),\,\Ent(\nu)<+\infty$. Further assume that
    \begin{enumerate}
        \item[(i)] There exist $\alpha_\nu\in(0,+\infty)$ and $\beta_\mu\in(0,+\infty]$ such that 
    \bes
    \kappa_{U_\nu}(r)\geq \alpha_\nu-r^{-1}\,\tilde{g}_\nu(r)\quad\text{ and }\quad \ell_{U_\mu}(r)\leq \beta_\mu+r^{-1}\,g_\mu(r)\eqsp,
    \ees
    with $g_\mu\in\cG$ and $\tilde{g}_\nu\in\tmcg$;
    \item[(ii)] There exist $\alpha_\mu\in(0,+\infty)$ and $\beta_\nu\in(0,+\infty]$ such that 
    \bes
    \kappa_{U_\mu}(r)\geq \alpha_\mu-r^{-1}\,\tilde{g}_\mu(r)\quad\text{ and }\quad \ell_{U_\nu}(r)\leq \beta_\nu+r^{-1}\,g_\nu(r)\eqsp,
    \ees
     with $g_\nu\in\cG$ and $\tilde{g}_\mu\in\tmcg$.
    \end{enumerate}
\end{assumptionAp}

We have decided to split \Cref{A:entropy} here in two points because in this section our statements hold by assuming either one of the two, and whenever we write \textit{“assume \Cref{A:kappatilde}-$(i)$"} we mean that we assume the finite entropy condition for both marginals and the point $(i)$ (similarly for point $(ii)$). Lastly, we would like to mention here that what follows in this section holds under a weaker version of \Cref{A:kappatilde}, where the class $\tmcg$ is replaced with the broader class of function $g\in\cG$ bounded and such that $2g{''}+g\,g{'}\leq 0$. However, for sake of exposition, we state our main result under \Cref{A:kappatilde} and stick with the function class $\tmcg$.

 Let us introduce for any fixed $\beta>0$ and any $g\in\cG$ and $\tilde{g}\in\tmcg$, the following functions for $\alpha,\,s,\,u\geq 0$
    \be\label{def:F:G}
    \begin{aligned}
    F_\beta^{g,\tilde{g}}(\alpha,s)=&\,\beta \, s+\frac{s}{T(1+T\alpha)}+s^{1/2}\,g(s^{1/2})+ \biggl(\frac{s^{1/2}}{1+T\alpha}\biggr)\,\tilde{g}\biggl(\frac{s^{1/2}}{1+T\alpha}\biggr)\eqsp,\\
    G_\beta^{g,\tilde{g}}(\alpha,u)=&\,\inf\{s\geq 0\eqsp\colon\eqsp F_\beta^{g,\tilde{g}}(\alpha,s)\geq u\}\eqsp,
\end{aligned}
\ee  
with the convention $ G_\beta^{g,\tilde{g}}(\alpha,u)\equiv0$ whenever $\beta=+\infty$.

Then, the main result of this section can be stated as follows.
\begin{theorem}\label{gio2022:thm}
If \Cref{A:kappatilde}-$(i)$ holds and if  
\be\label{kappa:psi0}
\kappa_{\psi^0}(r)\geq \alpha_\nu-T^{-1}-r^{-1}\tilde{g}_\nu(r) \eqsp,
\ee
then there exists a monotone increasing sequence $(\alpha_{\nu,n})_{n\in\N}\subseteq (\alpha_\nu-T^{-1},\alpha_\nu-T^{-1}+(\beta_\mu\,T^2)^{-1}]$ such that for any $n\geq 1$ and $r>0$ it holds
\be\label{eq_bound_gio_teo:psi}
\ell_{\varphi^n}(r) \leq r^{-2}\,F_{\beta_\mu}^{g_\mu,\tilde{g}_\nu}(\alpha_{\nu,n},r^2)-T^{-1}\quad\text{ and }\quad\kappa_{\psi^n}(r)\geq \alpha_{\nu,n}-r^{-1}\tilde{g}_\nu(r)\eqsp,
\ee
with $F_{\beta_\mu}^{g_\mu,\tilde{g}_\nu}$ defined as in \eqref{def:F:G}. Moreover, the sequence can be explicitly built by setting
\be\label{def:sistem:alpha:psi}
\begin{cases}
\alpha_{\nu,0}\coloneqq\alpha_\nu-T^{-1}\eqsp,\\
\alpha_{\nu,n+1}\coloneqq \alpha_\nu-T^{-1}+\frac{G_{\beta_\mu}^{g_\mu,\tilde{g}_\nu}(\alpha_{\nu,n},2)}{2\eqsp T^2} \eqsp,\quad n\in\N\eqsp,
\end{cases}
\ee
 $G_{\beta_\mu}^{g_\mu,\tilde{g}_\nu}$ given in \eqref{def:F:G}.
Finally, $(\alpha_{\nu,n})_{n\in\N}$ converges to $\alpha_{\psistar}\in(\alpha_\nu-T^{-1},\alpha_\nu-T^{-1}+(\beta_\mu\,T^2)^{-1}]$, fixed point solutions of \eqref{def:sistem:alpha:psi} and for any $r >0$,
\be\label{eq_bound_gio_teo:psi:limit}
\ell_{\varphistar}(r) \leq r^{-2}\,F_{\beta_\mu}^{g_\mu,\tilde{g}_\nu}(\alpha_{\psistar},r^2)-T^{-1}\quad\text{ and }\quad\kappa_{\psistar}(r) > \alpha_{\psistar}-r^{-1}\tilde{g}_\nu(r)\eqsp,
\ee
where $\varphistar$ and $\psistar$ are the Schr\"odinger potentials introduced in \eqref{eq:sch_potentials}.

Similarly, under \Cref{A:kappatilde}-$(ii)$ there exists a monotone increasing sequence $(\alpha_{\mu,n})_{n\in\N}\subseteq(\alpha_\mu-T^{-1},\alpha_\mu-T^{-1}+(\beta_\nu\,T^2)^{-1}]$ such that for any $n\geq 1$ and $r>0$ it holds
\be\label{eq_bound_gio_teo:varphi}
\ell_{\psi^n}(r) \leq r^{-2}\,F_{\beta_\nu}^{g_\nu,\tilde{g}_\mu}(\alpha_{\mu,n},r^2)-T^{-1}\quad\text{ and }\quad\kappa_{\varphi^n}(r)\geq \alpha_{\mu,n}-r^{-1}\tilde{g}_\mu(r)\eqsp,
\ee
with $F_{\beta_\nu}^{g_\nu,\tilde{g}_\mu}$ defined as in \eqref{def:F:G} and
\be\label{def:sistem:alpha:varphi}
\begin{cases}
\alpha_{\mu,1}\coloneqq\alpha_\mu-T^{-1}\eqsp,\\
\alpha_{\mu,n+1}\coloneqq \alpha_\mu-T^{-1}+\frac{G_{\beta_\nu}^{g_\nu,\tilde{g}_\mu}(\alpha_{\mu,n},2)}{2\eqsp T^2} \eqsp,\quad n\in\N\eqsp,
\end{cases}
\ee
with $G_{\beta_\nu}^{g_\nu,\tilde{g}_\mu}$ defined as in \eqref{def:F:G}.
Finally, $(\alpha_{\mu,n})_{n\in\N}$ converges to $\alpha_{\varphistar}\in(\alpha_\mu-T^{-1},\alpha_\mu-T^{-1}+(\beta_\nu\,T^2)^{-1}]$, fixed point solutions of \eqref{def:sistem:alpha:varphi} and for any $r >0$,
\begin{equation}\label{eq_bound_gio_teo:varphi:limit}
\ell_{\psistar}(r) \leq r^{-2}\,F_{\beta_\nu}^{g_\nu,\tilde{g}_\mu}(\alpha_{\varphistar},r^2)-T^{-1}\quad\text{ and }\quad\kappa_{\varphistar}(r) > \alpha_{\varphistar}-r^{-1}\tilde{g}_\mu(r)\eqsp.
\end{equation}
\end{theorem}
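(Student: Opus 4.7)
The proof will proceed by induction on $n$, and by the symmetry of \Cref{A:kappatilde}-(i) and (ii), I will only treat the first set of claims (on $\ell_{\varphi^n},\kappa_{\psi^n}$ and the sequence $(\alpha_{\nu,n})_{n\in\N}$); the bounds for $\ell_{\psi^n},\kappa_{\varphi^n},(\alpha_{\mu,n})_{n\in\N}$ follow from an identical argument with the roles of $\mu$ and $\nu$ interchanged. The key technical ingredient I plan to establish first is a \emph{transformation lemma} that describes how the nonlinear map $h\mapsto-\log P_T \rme^{-h}$ acts on the integrated convexity and concavity profiles: if $\kappa_h(r)\geq \alpha-r^{-1}\hat g(r)$ for some $\hat g\in\hat{\mcg}$ and some $\alpha>-T^{-1}$, then
\[
\ell_{\log P_T\rme^{-h}}(r)\;\leq\;-T^{-1}+\frac{1}{T(1+T\alpha)}+\frac{r^{-1}\hat g(r)}{(1+T\alpha)^2},
\]
while an upper bound $\ell_h(r)\leq \beta+r^{-1}g(r)$ with $g\in\mcg$ produces a matching lower bound for $\kappa_{-\log P_T\rme^{-h}}$. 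Both bounds follow from the representation \eqref{eq:expl:grad} linking first and second derivatives of $\log P_T\rme^{-h}$ to the mean and covariance of $\pi_T^{x,h}$, combined with coupling estimates for the diffusion \eqref{eq:SDE:conditional} whose invariant measure is exactly $\pi_T^{x,h}$. The structural conditions $2\hat g''+\hat g\hat g'\leq 0$ in the definition of $\hat{\mcg}$ and the concavity/monotonicity conditions in $\mcg$ are precisely what make the reflection-coupling construction propagate through the map, explaining the choices \eqref{eq:hyp_mcg}.

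The induction is then the following. The base case $n=0$ is exactly \eqref{kappa:psi0} with $\alpha_{\nu,0}=\alpha_\nu-T^{-1}$. For the inductive step, assuming $\kappa_{\psi^n}(r)\geq \alpha_{\nu,n}-r^{-1}\hat g_\nu(r)$, I proceed in two sub-steps that mirror the two Sinkhorn updates. First, from $\varphi^{n+1}=U_\mu+\log P_T\rme^{-\psi^n}$, subadditivity of $\ell$ under sums combined with the upper bound $\ell_{U_\mu}(r)\leq\beta_\mu+r^{-1}g_\mu(r)$ and the transformation lemma applied to $\psi^n$ yields exactly
\[
\ell_{\varphi^{n+1}}(r)\leq r^{-2}F_{\beta_\mu}^{g_\mu,\hat g_\nu}(\alpha_{\nu,n},r^2)-T^{-1},
\]
matching the definition \eqref{def:F:G}. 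Second, from $\psi^{n+1}=U_\nu+\log P_T\rme^{-\varphi^{n+1}}$, subadditivity of $\kappa$ together with $\kappa_{U_\nu}(r)\geq \alpha_\nu-r^{-1}\hat g_\nu(r)$ and the transformation lemma applied to the concavity bound on $\varphi^{n+1}$ just obtained produces a lower bound of the desired form $\alpha_{\nu,n+1}-r^{-1}\hat g_\nu(r)$. Inverting the function $s\mapsto F_{\beta_\mu}^{g_\mu,\hat g_\nu}(\alpha_{\nu,n},s)$ at the level $2$ is precisely what introduces $G_{\beta_\mu}^{g_\mu,\hat g_\nu}(\alpha_{\nu,n},2)/(2T^2)$ and so the recursion \eqref{def:sistem:alpha:psi} for $\alpha_{\nu,n+1}$.

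Monotonicity of $(\alpha_{\nu,n})_{n\in\N}$ will follow from the monotonicity of $G_{\beta_\mu}^{g_\mu,\hat g_\nu}(\cdot,2)$ in its first argument, inherited from the monotonicity of $F_{\beta_\mu}^{g_\mu,\hat g_\nu}(\cdot,s)$; the uniform upper bound $\alpha_\nu-T^{-1}+(\beta_\mu T^2)^{-1}$ comes from the elementary estimate $G_{\beta_\mu}^{g_\mu,\hat g_\nu}(\alpha,2)\leq 2/\beta_\mu$ obtained by noting $F_{\beta_\mu}^{g_\mu,\hat g_\nu}(\alpha,s)\geq\beta_\mu s$. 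Hence $\alpha_{\nu,n}\nearrow \alpha_{\psistar}$ with $\alpha_{\psistar}$ a fixed point of \eqref{def:sistem:alpha:psi}. The limiting bounds \eqref{eq_bound_gio_teo:psi:limit} for the Schr\"odinger potentials follow by passing to the limit in $n$, using convergence of Sinkhorn iterates $\varphi^n\to\varphistar$, $\psi^n\to\psistar$ in a sense strong enough to preserve integrated convexity bounds (pointwise convergence of gradients suffices). The strict inequality $\kappa_{\psistar}(r)>\alpha_{\psistar}-r^{-1}\hat g_\nu(r)$ at the fixed point is obtained by noting that the transformation lemma is \emph{strictly} improving whenever the covariance of $\pi_T^{x,\varphistar}$ is non-degenerate, which always holds. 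The main obstacle, and where I expect the bulk of the technical work to lie, is the transformation lemma itself in its integrated form: the pointwise analogue is a direct consequence of Brascamp--Lieb, but for weakly convex $h$ where $\kappa_h$ can be negative at small scales, one must use a reflection coupling of \eqref{eq:SDE:conditional} in the spirit of \cite{eberle2016reflection}, and the detailed bookkeeping to show that the resulting contraction rate retains precisely the functional form $\alpha-r^{-1}\hat g(r)$ after the transformation is the delicate part.
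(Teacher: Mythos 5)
Your overall inductive scheme matches the paper's: alternate between a convexity-to-concavity step ($\kappa_{\psi^n}$ lower bound $\Rightarrow$ $\ell_{\varphi^{n+1}}$ upper bound) and a concavity-to-convexity step ($\ell_{\varphi^{n+1}}$ upper bound $\Rightarrow$ $\kappa_{\psi^{n+1}}$ lower bound), record the improvement in a scalar recursion given by $F_\beta^{g,\hat g}$ and its inverse $G_\beta^{g,\hat g}(\cdot,2)$, and control the limit using monotonicity and boundedness of the recursion. Your explicit formula for the convexity-to-concavity step matches \Cref{appo:lemma31}, the superadditivity of $\kappa$ (and subadditivity of $\ell$) under sums plays the role you describe, and the bound $G_{\beta_\mu}^{g_\mu,\hat g_\nu}(\alpha,2)\leq 2/\beta_\mu$ is exactly how the paper controls the range of $(\alpha_{\nu,n})_n$.

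However, there is a genuine gap in the concavity-to-convexity direction of your ``transformation lemma.'' You assert that both directions follow from \eqref{eq:expl:grad} ``combined with coupling estimates for the diffusion \eqref{eq:SDE:conditional},'' and that the conditions on $\hat{\mcg}$ and $\mcg$ ``are precisely what make the reflection-coupling construction propagate through the map.'' Only the convexity-to-concavity direction is driven by the coupling argument (via \Cref{thm:HJB}, \ie\ stability of $\cF_{\hat g}$ under the HJB flow, whose proof uses reflection coupling and whose validity rests on $2\hat g''+\hat g\hat g'\leq 0$). The other direction cannot be obtained by reflection coupling: given an \emph{upper} bound $\ell_{\varphi^{n+1}}(r)\leq -T^{-1}+r^{-2}F_{\beta_\mu}^{g_\mu,\hat g_\nu}(\alpha_{\nu,n},r^2)$, you need a \emph{lower} bound on the covariance of $\pi_T^{x,\varphi^{n+1}}$, \ie\ a lower bound on the spread of the measure, which is an anti-concentration estimate — the opposite of what coupling contraction gives you. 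The paper's \Cref{appo:lemma33} instead proceeds by (i) the law of total variance $\mathrm{Var}(X_1)\geq\PE[\mathrm{Var}(X_1\mid X_2,\dots,X_d)]$, (ii) the identity $1=\frac12\int\langle\nabla V^{y,z}(x)-\nabla V^{y,z}(\hat x),x-\hat x\rangle\,\pi\otimes\pi$ combined with the $\ell$-upper bound, and (iii) Jensen's inequality using concavity of $s\mapsto F_{\beta_\mu}^{g_\mu,\hat g_\nu}(\alpha,s)$, which is precisely what the \eqref{eq:hyp_mcg}-conditions in $\mcg$ buy you. So the monotonicity/concavity conditions on $\mcg$ feed into the Jensen step, not into a reflection coupling, and without this variance/Jensen argument the inductive step does not close.

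A secondary but important issue: you obtain the limiting bounds \eqref{eq_bound_gio_teo:psi:limit} by passing to the limit in $n$ using convergence $\varphi^n\to\varphistar$, $\psi^n\to\psistar$. This is at best a detour (you would have to invoke a qualitative convergence result for Sinkhorn before establishing the theorem whose purpose is to feed a quantitative convergence proof) and at worst circular. The paper avoids this entirely: it treats $(\varphistar,\psistar)$ as a \emph{constant} Sinkhorn iterate, runs the same induction with the same sequence $(\alpha_{\nu,n})_n$, and lets $n\to\infty$ in the resulting bound on the fixed pair. That also explains why the strict inequality $\alpha_{\psistar}>\alpha_\nu-T^{-1}$ comes from $\alpha_\nu-T^{-1}$ not being an accumulation point of the fixed points of \eqref{general:fixed:point} (item (iii) of \Cref{appo:lemma2}), rather than from a covariance-non-degeneracy argument as you suggest.
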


\begin{remark}\label{rem:gen_gio}
The above result is an extension of \cite[Theorem 1.2]{conforti2022quadratic}
    where the author just provides   the limit-bounds 
    \eqref{eq_bound_gio_teo:psi:limit} and \eqref{eq_bound_gio_teo:varphi:limit} in the case when $g^{\mu}=g^{\nu}\equiv0$ and $\tilde{g}_\mu,\tilde{g}_{\nu}$ take the form \eqref{eq:tanh}. In the above result we show that the iterative proof given there can be actually employed when proving the estimates \eqref{eq_bound_gio_teo:psi} and \eqref{eq_bound_gio_teo:varphi} along Sinkhorn's algorithm. 
    
    Let us also mention that our result encompasses \cite[Theorem 4]{chewi2022entropic} when considering 
    $\tilde{g}_\nu\equiv 0$ and $g_\mu\equiv0$ in \Cref{A:kappatilde}-$(i)$.
\end{remark}

We provide a proof of the above theorem at the end of this section. Let us mention here that we will show in \Cref{initial:assumption} that Assumption \eqref{kappa:psi0} on $\psi^0$ can be essentially dropped; here we simply observe that it is met for a regular enough initial condition, e.g., for  $\psi^0=U_\nu$. 

Let us also point out that the above theorem guarantees existence and uniqueness for the strong solution of the SDE
\be\label{again:SDE}
\De Y_t=-\biggl(\frac{Y_t-x}{2T}+\frac12\nabla h(Y_t)\biggr)\De t+\De B_t\,.
\ee
for any choice of $h=\psistar,\,\psi^n,\,\varphistar,\,\varphi^n$. Indeed from \Cref{gio2022:thm} we immediately deduce 
\begin{corollary}\label{cor:lyap}
    Under the assumptions of \Cref{gio2022:thm}, for any even $p\geq 2$  the potential $V_p(y)=1+|y|^p$ is a Lyapunov function for \eqref{again:SDE} with  $h\in\{\psistar,\,\psi^n,\,\varphistar,\,\varphi^n\}$. As a consequence, existence and uniqueness of strong solutions holds for these SDEs.
\end{corollary}
\begin{proof}
We only focus on $h=\psistar$ since the other cases are similar. As a direct consequence of \Cref{gio2022:thm} we know that  
\bes
\left\langle \nabla\psistar(y),\,\frac{y}{|y|}\right\rangle\geq \alpha_{\psistar}\,|y|-\tilde{g}_\nu(|y|)-|\nabla \psistar(0)|\eqsp,
\ees
Then for any fixed  $x\in\bbRD$, since $\tilde{g}_\nu\in\tmcg$ is bounded, it holds for any $y\in\bbRD$
    \bes\begin{split}
-\frac12\,\left\langle T^{-1}(y-x)+\nabla \psistar(y),\,y\right\rangle\leq -\frac{\alpha_{\psistar}+T^{-1}}{2}\,|y|^2+\frac{\|\tilde{g}_\nu\|_{\infty}+|\nabla \psistar(0)|+T^{-1}|x|}{2}|y|\eqsp,
\end{split}
\ees
and hence there exist $A,B>0$ such that uniformly in $y\in\bbRD$ it holds
\bes
-\frac12\,\left\langle T^{-1}(y-x)+\nabla \psistar(y),\,y\right\rangle\leq B-A\,|y|^2\eqsp.
\ees
At this stage, \cite[Lemma 4.2]{mattingly2002} guarantees that for any even $p\geq 2$ the potential $V_p(y)=1+|y|^p$ is a Lyapunov function for the diffusion \eqref{again:SDE}.

More precisely it holds a geometric drift condition, \ie, for any  $A_{\psistar}\in (0,p\gamma)$ there exists a finite constant $B_{\psistar}=B_{\psistar}(A_{\psistar},p)$ such that for any $y\in\bbRD$
\be\label{geom:drif:h}
\cL_{\psistar} V_p(y)\leq -A_{\psistar} V_p(y)+B_{\psistar}\eqsp,
\ee
where above $\cL_{\psistar}\coloneqq  \Delta/2-\frac12\langle T^{-1}(y-x)+\nabla \psistar(y),\,\nabla\rangle$ denotes the generator associated to the SDE \eqref{again:SDE}.
Finally, existence and uniqueness of strong solutions of \eqref{again:SDE} follows from  \cite[Theorem 2.1]{tweedie96} (see also \cite[Section 2]{meyn1993tweedie}).
\end{proof}

The proof of \Cref{gio2022:thm} will be based on  a propagation of integrated-convexity along Hamilton-Jacobi-Bellman (HJB) equations observed in \cite{conforti2022quadratic}, based on coupling by reflection techniques, which reads as follows
\begin{theorem}[Theorem 2.1 in \cite{conforti2022quadratic}]\label{thm:HJB}
    For any fixed function $\tilde{g}\in\tmcg$, consider the class of functions
    \bes
    \cF_{\tilde{g}}\coloneqq\{h\in\rmC^1(\bbRD)\eqsp\colon\eqsp\kappa_h(r)\geq- r^{-1}\,\tilde{g}(r)\quad\forall\,r>0\}\eqsp.
    \ees
    Then, the class $\cF_{\tilde{g}}$ is stable under the action of the HJB flow, \ie,
    \bes
    h\in\cF_{\tilde{g}}\eqsp\Rightarrow\eqsp -\log P_{T-t}\exp(-h)\in\cF_{\tilde{g}}\quad\forall \eqsp 0\leq t\leq T\eqsp.
    \ees
\end{theorem}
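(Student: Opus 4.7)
My approach is to combine the probabilistic representation of the HJB flow with a coupling-by-reflection argument, in which the defining inequality $2\hat g'' + \hat g \hat g' \leq 0$ of $\cmcg$ provides the structural closure. Fix $h \in \cF_{\hat g}$ and set $u(s, x) \coloneqq -\log P_{T-s}\exp(-h)(x)$ for $s \in [0, T]$; applying \eqref{eq:expl:grad} with $T-s$ in place of $T$ yields
\[
\nabla u(s, x) = (T-s)^{-1}\biggl(x - \int y\, \pi^{x,h}_{T-s}(\rmd y)\biggr),
\]
where $\pi^{x,h}_{T-s}$ is the unique invariant distribution of the SDE
\[
\rmd Y_\tau = -\biggl(\frac{Y_\tau-x}{2(T-s)} + \frac{1}{2}\nabla h(Y_\tau)\biggr)\rmd \tau + \rmd B_\tau .
\]
Consequently, for any coupling $(Z^x, Z^y)$ of $(\pi^{x,h}_{T-s}, \pi^{y,h}_{T-s})$,
\[
\langle \nabla u(s, x) - \nabla u(s, y),\, x-y\rangle = (T-s)^{-1}\bigl(|x-y|^2 - \bbE\langle Z^x - Z^y,\, x-y\rangle\bigr),
\]
so by Cauchy--Schwarz the target $\kappa_{u(s,\cdot)}(r) \geq -r^{-1}\hat g(r)$ reduces to proving the coupling estimate $\bbE|Z^x - Z^y| \leq |x-y| + (T-s)\,\hat g(|x-y|)$.

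I would realize $(Z^x, Z^y)$ as the stationary law of the reflection-coupled SDEs above started at $x$ and $y$. Using $\kappa_h(r)\geq -r^{-1}\hat g(r)$ from $h\in\cF_{\hat g}$, an It\^o decomposition for the distance $R_\tau = |Y^x_\tau - Y^y_\tau|$ gives a one-dimensional stochastic differential inequality whose drift is dominated by $-R_\tau/(2(T-s)) + \tfrac12 \hat g(R_\tau)$, on top of a $2\,\rmd W_\tau$ martingale term with $W$ a one-dimensional Brownian motion. Following Eberle's coupling-by-reflection toolbox, I would then construct an auxiliary concave non-decreasing function $f:[0,\infty)\to[0,\infty)$ with $f(0)=0$ as the solution of an ODE whose coefficients depend on $\hat g$, apply It\^o to $f(R_\tau)$ so as to control $\bbE[f(R_\infty)]$, and convert the resulting stationary bound into the coupling estimate on $\bbE|Z^x - Z^y|$ required in the first step. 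The defining inequality $2\hat g'' + \hat g \hat g' \leq 0$ of $\cmcg$ is precisely what renders $\hat g$ a stationary point of the induced flow on convexity profiles, so that $r \mapsto -r \hat g(r)$ is reproduced rather than strictly degraded along the HJB evolution, which is what closes the estimates.

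The main obstacle is the precise compatibility between $f$ and the structural ODE $2\hat g'' + \hat g \hat g' \leq 0$: one must rigorously construct $f$ on the unbounded domain, justify It\^o's formula up to the coupling time for a possibly unbounded $\nabla h$, and verify that the coupling output indeed yields the sharp profile $-\hat g(r)$ rather than a strictly weaker bound. An equivalent PDE-level route, useful as a sanity check, would be to apply a parabolic maximum principle to $W(s,x,y) := \langle \nabla u(s,x) - \nabla u(s,y), x-y\rangle + |x-y|\hat g(|x-y|)$ (which is non-negative at $s=T$ by assumption on $h$) and verify that the reaction term in the evolution of $W$ has the correct sign on $\{W=0\}$---a computation that, once carried out, reduces precisely to the inequality $2\hat g'' + \hat g \hat g' \leq 0$ characterising $\cmcg$.
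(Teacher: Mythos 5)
The paper does not actually prove this theorem: immediately after stating it, the authors write that they omit the proof because it is Theorem 2.1 of \cite{conforti2022quadratic}, and that the extension from $\hat g$ of $\tanh$ form to general $\hat g\in\cmcg$ is automatic because that proof only uses the differential inequality $2\hat g''+\hat g\hat g'\leq 0$. So there is no in-text proof to compare against, only the citation and the surrounding remark that the argument is a propagation-of-convexity result along the HJB flow built on coupling by reflection.

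Your plan is in the right general spirit, but the specific reduction has a genuine gap. After \eqref{eq:expl:grad} and Cauchy--Schwarz you reduce to the estimate $\bbE|Z^x-Z^y|\leq |x-y|+(T-s)\hat g(|x-y|)$ for the \emph{reflection-coupled} stationary pair with marginals $\pi^{x,h}_{T-s}$ and $\pi^{y,h}_{T-s}$. Since any coupling gives $\bbE|Z^x-Z^y|\geq \bfW_1(\pi^{x,h}_{T-s},\pi^{y,h}_{T-s})$, this requires the reflection coupling to be $\bfW_1$-optimal with no slack. It is not. Already in $d=1$, take $\hat g\equiv\delta$ (a legitimate element of $\cmcg$) and $h(y)=-\tfrac{\delta}{2}|y|$, so that $\kappa_h(r)=-\delta/r$ with equality; for $x\to+\infty$, $y\to-\infty$ with $r=|x-y|$ fixed, the conditional laws are Gaussians of the same variance with means $x+(T-s)\delta/2$ and $y-(T-s)\delta/2$, so $\bfW_1(\pi^{x,h}_{T-s},\pi^{y,h}_{T-s})\to r+(T-s)\delta=r+(T-s)\hat g(r)$, which saturates your target \emph{exactly}. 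Any coupling that is not exactly optimal — and the noisy reflection coupling is not — therefore overshoots, and the inequality you need fails. Compounding this, the plan of passing through a concave auxiliary $f$ and then converting $\bbE[f(R)]$ back to $\bbE[R]$ costs a multiplicative factor: this is precisely what happens in \Cref{teo:general} of this paper, where the prefactor $(\tilde g_\frp'(0)/\tilde g_\frp'(R))^2\geq 1$ appears. That loss is fatal here, because it would turn $\kappa_{u(s,\cdot)}(r)\geq -r^{-1}\hat g(r)$ into $\geq -Cr^{-1}\hat g(r)$ with $C>1$, which does not land back in $\cF_{\hat g}$ and would break the closure of \Cref{appo:lemma31} and \Cref{appo:lemma33}.

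The slack enters at the Cauchy--Schwarz step: what is actually needed is $\langle \bbE[Z^x]-\bbE[Z^y],\,x-y\rangle\leq r^2+(T-s)r\hat g(r)$, a scalar projection estimate, not a full-vector bound $\bbE|Z^x-Z^y|$ for a specific coupling. This cannot be attacked as a static $\bfW_1$ estimate on invariant measures with the sharp constant you need. The route you describe as a sanity check — a parabolic comparison for $W(s,x,y)=\langle\nabla u(s,x)-\nabla u(s,y),x-y\rangle + |x-y|\hat g(|x-y|)$ along the backward HJB flow, with the reaction-term sign check reducing to $2\hat g''+\hat g\hat g'\leq 0$ — is in fact the argument to carry out: it propagates the profile infinitesimally in $s$ and is consistent both with the ``propagation along HJB'' framing of the cited result and with the paper's own remark that only the differential inequality is needed.
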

We omit the proof of the above result since it runs exactly as stated in \cite{conforti2022quadratic}. There it is proven when $\tilde{g}$ is of the form \eqref{eq:tanh}. However same proofs allow to reach the conclusion for any function $\tilde{g}\in\tmcg$
since it only requires $\tilde{g}$ to satisfy the differential inequality 
\bes
2(\tilde{g}){''}+\tilde{g}\,(\tilde{g}){'}\leq 0\eqsp,
\ees
which is an equality in the special case considered there $\tilde{g}(r)=\tanh(r)$.

As a first consequence of the previous theorem we may immediately deduce the following  integrated propagation \emph{convexity-to-concavity} result.

\begin{lemma}\label{appo:lemma31}
  If \Cref{A:kappatilde}-$(i)$ holds true, $\alpha_{\nu,n}>-T^{-1}$ and if for any $r>0$
\be\label{another:assumption}
\kappa_{\psi^n}(r)\geq \alpha_{\nu,n}-r^{-1}\,\tilde{g}_\nu(r)\eqsp,
\ee
    then 
    \bes
    \ell_{\varphi^{n+1}}(r)\leq \beta_\mu+g_\mu(r)-\frac{\alpha_{\nu,n}}{1+T\alpha_{\nu,n}}+\frac{\tilde{g}_\nu\biggl(\frac{r}{1+T\alpha_{\nu,n}}\biggr)}{r(1+T\alpha_{\nu,n})}=-T^{-1}+r^{-2}\,F_{\beta_\mu}^{g_\mu,\tilde{g}_\nu}(\alpha_{\nu,n},r^2)\eqsp.
    \ees

    Similarly  if \Cref{A:kappatilde}-$(ii)$ holds, $\alpha_{\mu,n}>-T^{-1}$ and if for any $r>0$
    \bes
\kappa_{\varphi^n}(r)\geq \alpha_{\mu,n}-r^{-1}\,\tilde{g}_\mu(r)\eqsp,
\ees
    then 
     \bes
    \ell_{\psi^{n}}(r)\leq \beta_\nu+r^{-1}\,g_\nu(r)-\frac{\alpha_{\mu,n}}{1+T\alpha_{\mu,n}}+\frac{\tilde{g}_\mu\biggl(\frac{r}{1+T\alpha_{\mu,n}}\biggr)}{r(1+T\alpha_{\mu,n})}=-T^{-1}+r^{-2}\,F_{\beta_\nu}^{g_\nu,\tilde{g}_\mu}(\alpha_{\mu,n},r^2)\eqsp.
    \ees
\end{lemma}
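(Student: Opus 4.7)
The plan is to combine the Sinkhorn recursion $\varphi^{n+1} = U_\mu + \log P_T \exp(-\psi^n)$ with the sub-additivity of the integrated concavity profile, namely $\ell_{\varphi^{n+1}}(r) \leq \ell_{U_\mu}(r) + \ell_{\log P_T\exp(-\psi^n)}(r)$. \Cref{A:kappatilde}-(i) immediately bounds the first term by $\beta_\mu + r^{-1}g_\mu(r)$, so the entire work lies in controlling the HJB piece $\ell_{\log P_T\exp(-\psi^n)}(r)$.

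To handle it, I would decompose $\psi^n$ according to the given hypothesis. Setting $\tilde\psi^n(y) := \psi^n(y) - \frac{\alpha_{\nu,n}}{2}|y|^2$, the assumption $\kappa_{\psi^n}(r)\geq \alpha_{\nu,n}-r^{-1}\hat g_\nu(r)$ translates exactly into $\tilde\psi^n \in \cF_{\hat g_\nu}$, the invariant class of \Cref{thm:HJB}. The condition $\alpha_{\nu,n}>-T^{-1}$ is precisely what keeps the quadratic exponent in $y$ strictly convex, so completing the square in the Gaussian factor of $P_T\exp(-\psi^n)$ yields
\begin{equation*}
\log P_T\exp(-\psi^n)(x) \;=\; -\frac{\alpha_{\nu,n}|x|^2}{2(1+T\alpha_{\nu,n})} \,+\, \log \tilde P_{T'}\exp(-\tilde\psi^n)(\tilde x) \,+\, c,
\end{equation*}
where $T' := T/(1+T\alpha_{\nu,n})$, $\tilde x := x/(1+T\alpha_{\nu,n})$, $\tilde P_{T'}$ denotes the Brownian semigroup at time $T'$, and $c$ is a normalising constant.

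The first summand is a pure quadratic, contributing exactly $-\alpha_{\nu,n}/(1+T\alpha_{\nu,n})$ to $\ell$. To the second summand, I apply \Cref{thm:HJB} at time $T'$, obtaining $-\log\tilde P_{T'}\exp(-\tilde\psi^n)\in\cF_{\hat g_\nu}$, i.e.~$\kappa_{-\log\tilde P_{T'}\exp(-\tilde\psi^n)}(\rho)\geq -\rho^{-1}\hat g_\nu(\rho)$. Pulling this estimate back from $\tilde x$ to $x$, the chain rule produces a factor $(1+T\alpha_{\nu,n})^{-2}$ on the second-derivative-type quantities, which is exactly where the denominator $(1+T\alpha_{\nu,n})^2$ in the claim originates. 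Adding the quadratic contribution, the pulled-back HJB perturbation and the bound on $\ell_{U_\mu}$, and invoking the identity $-T^{-1}+\frac{1}{T(1+T\alpha_{\nu,n})}= -\frac{\alpha_{\nu,n}}{1+T\alpha_{\nu,n}}$ together with the definition of $F_{\beta_\mu}^{g_\mu,\hat g_\nu}$, collapses the whole expression into $-T^{-1}+r^{-2}F_{\beta_\mu}^{g_\mu,\hat g_\nu}(\alpha_{\nu,n},r^2)$. The statement under \Cref{A:kappatilde}-(ii) follows by the entirely symmetric argument starting from $\psi^{n} = U_\nu + \log P_T\exp(-\varphi^{n})$, swapping the roles of $\mu$ and $\nu$.

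The main technical step I expect to be delicate is the bookkeeping of the rescaling: after pulling back via $\tilde x = x/(1+T\alpha_{\nu,n})$, the perturbative term naturally involves $\hat g_\nu$ evaluated at the rescaled radius $r/(1+T\alpha_{\nu,n})$, and it needs to be rewritten in the form $r^{-1}\hat g_\nu(r)/(1+T\alpha_{\nu,n})^2$ appearing in the conclusion. The structural properties built into $\hat \cG$—namely, concavity of $r\mapsto r^{1/2}\hat g_\nu(r^{1/2})$, boundedness of $\hat g_\nu$, and the differential inequality $2\hat g_\nu''+\hat g_\nu\hat g_\nu'\leq 0$—are the tools that make this identification go through, and that ensure the resulting bound stays compatible with the iterative structure used afterwards in \Cref{gio2022:thm}.
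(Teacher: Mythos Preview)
Your approach is exactly the paper's: subtract the quadratic $\frac{\alpha_{\nu,n}}{2}|\cdot|^2$ from $\psi^n$, complete the square in the heat kernel, apply \Cref{thm:HJB} to the shifted potential at the rescaled time $T/(1+T\alpha_{\nu,n})$, and pull back through the dilation $x\mapsto x/(1+T\alpha_{\nu,n})$. The paper's proof is equally terse at the final rescaling step (it simply says ``combining it with \Cref{A:kappatilde}-(i)''), so the bookkeeping concern you flag is precisely where the paper also leaves the details implicit.
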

\begin{proof}
    Let us firstly notice that our assumption \eqref{another:assumption} is equivalent to stating that
    \bes
    \bar{\psi}^n\coloneqq \psi^n-\frac{\alpha_{\nu,n}}{2}\,|\cdot|^2\,\in \cF_{\tilde{g}_\nu}\eqsp,
    \ees
    and therefore \Cref{thm:HJB} implies that $-\log P_T\exp(-\bar{\psi}^n)\in\cF_{\tilde{g}_\nu}$. 
    Moreover, since $\varphi^{n+1}$ is defined via \eqref{eq:sinkhorn_iterate}, let us preliminary notice that it can be written in terms of $-\log P_T\exp(-\bar{\psi}^n)$. Indeed, by completing the square we immediately see that
    \bes
    \begin{split}
        -\log P_T&\exp(-\psi^n)(x)-\frac{d}{2}\log(2\pi T)=-\log\int \exp\biggl(-\frac{|x-y|^2}{2T}-\frac{\alpha_{\nu,n}}{2}|y|^2-\bar{\psi}^n(y)\biggr)\eqsp\De y\\
        =&\frac{\alpha_{\nu,n}\,|x|^2}{2(1+T\alpha_{\nu,n})}-\log\int \exp\biggl(-\frac{1+T\alpha_{\nu,n}}{2T}\,|y-(1+T\alpha_{\nu,n})^{-1}x|^2-\bar{\psi}^n(y)\biggr)\eqsp\De y\\
        =&\frac{\alpha_{\nu,n}\,|x|^2}{2(1+T\alpha_{\nu,n})}-\log P_{T/(1+T\alpha_{\nu,n})}\exp(-\bar{\psi}^n)((1+T\alpha_{\nu,n})^{-1}x)-\frac{d}{2}\log\frac{2\pi T}{1+T\alpha_{\nu,n}}\,.
    \end{split}
    \ees
    Therefore we have
    \bes
    \begin{split}
    \nabla\varphi^{n+1}(x)=&\,\nabla U_\mu(x)+\nabla\log P_T e^{-\psi^n}(x)\\
    =&\nabla U_\mu(x)-\frac{\alpha_{\nu,n}\,x}{1+T\alpha_{\nu,n}}-\frac1{1+T\alpha_{\nu,n}}\,(\nabla h) ((1+T\alpha_{\nu,n})^{-1}x)\eqsp,
    \end{split}
    \ees
    where we have set $h(y)\coloneqq -\log P_{T/(1+T\alpha_{\nu,n})}\exp(-\bar{\psi}^n)(y) $ for notations' sake.
     Now, if we fix $r>0$ and take $x, y\in\bbRD$ with $|x-y|=r$, we clearly have
     \bes
     \begin{aligned}
     \langle\nabla \varphi^{n+1}(x)-\nabla \varphi^{n+1}(y),\,x-y\rangle=\langle\nabla U_\mu(x)-\nabla U_\mu(y),\,x-y\rangle-\frac{\alpha_{\nu,n}\,r^2}{1+T\alpha_{\nu,n}}\\
     -\frac{\langle(\nabla h)((1+T\alpha_{\nu,n})^{-1}x)-(\nabla h)((1+T\alpha_{\nu,n})^{-1}y),\,x-y\rangle}{1+T\alpha_{\nu,n}}\\
     \leq \beta_\mu\,r^2+r\,g_\mu(r)-\frac{\alpha_{\nu,n}\,r^2}{1+T\alpha_{\nu,n}} +\frac{r}{1+T\alpha_{\nu,n}} \tilde{g}_\nu((1+T\alpha_{\nu,n})^{-1}r)
     \end{aligned}\ees
     where the last step follows from \Cref{A:kappatilde}-$(i)$ and since $h\coloneqq -\log P_{T/(1+T\alpha_{\nu,n})}\exp(-\bar{\psi}^n)\in\cF_{\tilde{g}_\nu}$.
     Since $r>0$ and $x,\,y\in\bbRD$ were arbitrary, this concludes the proof of the first part of the statement. 
    The second part of the statement follows the same lines.
\end{proof}

\begin{lemma}\label{appo:lemma2}
    Fix $\beta\in(0,+\infty]$ and two functions $g\in\cG$ and $\tilde{g}\in\tmcg$. 
Then the following properties hold true.
    \begin{enumerate}[wide, labelwidth=!, labelindent=0pt]
        \item For any $\alpha>-T^{-1}$ the function $s\mapsto F_{\beta}^{g,\tilde{g}}(\alpha,s)$ is concave and increasing on $[0,+\infty)$.
        \item The function $\alpha\mapsto G_{\beta}^{g,\tilde{g}}(\alpha,2)$ is positive and non-decreasing over $(-T^{-1},+\infty)$.
        \item For any given $a_0>0$, the fixed-point problem
        \be\label{general:fixed:point}
        \alpha=a_0-T^{-1}+\frac{G_\beta^{g,\tilde{g}}(\alpha,2)}{2\eqsp T^2} 
        \ee
        admits at least one solution on $(a_0-T^{-1},\,a_0-T^{-1}+(\beta\,T^2)^{-1}]$ and, as soon as $\beta<+\infty$, $a_0-T^{-1}$ does not belong to the closure of the set of solutions of \eqref{general:fixed:point}.
    \end{enumerate}
\end{lemma}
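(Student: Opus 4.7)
The plan is to treat items (1)--(3) in order, with (1) and (2) being structural bookkeeping and (3) reducing to a one-dimensional intermediate-value argument.

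For item (1), I would decompose $F_\beta^{g,\hat g}(\alpha,\cdot)$ into its four summands. Under $\alpha>-T^{-1}$ the prefactors $1/(T(1+T\alpha))$ and $1/(1+T\alpha)^2$ are strictly positive, so the two affine-in-$s$ terms $\beta s$ and $s/(T(1+T\alpha))$ are strictly increasing (hence concave). Applying the defining property \eqref{eq:hyp_mcg} of $\cG\supseteq\cmcg$ to $g$ and $\hat g$ with the substitution $r=s$ shows that $s\mapsto s^{1/2}g(s^{1/2})$ and $s\mapsto s^{1/2}\hat g(s^{1/2})$ are non-decreasing and concave on $[0,+\infty)$, with vanishing value at $s=0$ thanks to $\lim_{r\downarrow 0}rg(r)=0$. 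Summing preserves concavity, and strict monotonicity is inherited from the linear pieces. The $\beta=+\infty$ case collapses under the stated convention to a trivial statement.

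For item (2), I would first observe that for fixed $s\geq 0$ the map $\alpha\mapsto F_\beta^{g,\hat g}(\alpha,s)$ is strictly decreasing on $(-T^{-1},+\infty)$, because both $1/(T(1+T\alpha))$ and $1/(1+T\alpha)^2$ are. Coupled with the strict monotonicity in $s$ from (1), inverting the relation $F(\alpha,G(\alpha,2))=2$ yields that $G(\cdot,2)$ is non-decreasing in $\alpha$. Positivity of $G(\alpha,2)$ follows from $F(\alpha,0)=0<2$ and $F(\alpha,s)\to+\infty$ as $s\to+\infty$ (at least linearly when $\beta<+\infty$), so by continuity there is a unique positive $s^\star$ with $F(\alpha,s^\star)=2$. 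I would also record that joint continuity of $F$ in $(\alpha,s)$ together with its strict monotonicity in $s$ yields continuity of $\alpha\mapsto G(\alpha,2)$, which I need in the next step.

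For item (3), I would set $\Phi(\alpha):=a_0-T^{-1}+G_\beta^{g,\hat g}(\alpha,2)/(2T^2)$ and apply the IVT to $h(\alpha):=\Phi(\alpha)-\alpha$ on the compact interval $I:=[a_0-T^{-1},\,a_0-T^{-1}+(\beta T^2)^{-1}]$. The bound $G_\beta^{g,\hat g}(\alpha,2)\leq 2/\beta$ comes from $\beta\, G(\alpha,2)\leq F(\alpha,G(\alpha,2))=2$, whence $\Phi$ maps $I$ into $(a_0-T^{-1},\,a_0-T^{-1}+(\beta T^2)^{-1}]$. Therefore $h(a_0-T^{-1})=G(a_0-T^{-1},2)/(2T^2)>0$ by item (2), while $h(a_0-T^{-1}+(\beta T^2)^{-1})\leq 0$. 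Continuity of $h$ then produces a zero in $I$, yielding a solution of \eqref{general:fixed:point} that is necessarily strictly above $a_0-T^{-1}$ since $G>0$. For the final claim, when $\beta<+\infty$ the monotonicity of $G$ from (2) forces every solution $\alpha$ of \eqref{general:fixed:point} to satisfy $\alpha=\Phi(\alpha)\geq a_0-T^{-1}+G(a_0-T^{-1},2)/(2T^2)$, and this uniform gap (the lower bound is strictly larger than $a_0-T^{-1}$ by the positivity from (2)) excludes $a_0-T^{-1}$ from the closure of the solution set.

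The main (mild) obstacle is making the continuity of $\alpha\mapsto G(\alpha,2)$ rigorous without circular reasoning with part (2); I would resolve it by a bare-hands $\varepsilon$-argument: strict monotonicity of $F(\alpha_0,\cdot)$ at $s^\star:=G(\alpha_0,2)$ gives $F(\alpha_0,s^\star-\varepsilon)<2<F(\alpha_0,s^\star+\varepsilon)$, and continuity of $F$ in $\alpha$ propagates these strict inequalities to a neighborhood of $\alpha_0$, pinning $G(\alpha,2)$ inside $(s^\star-\varepsilon,s^\star+\varepsilon)$.
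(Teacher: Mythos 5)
Your proof is correct and tracks the paper's argument quite closely: item (1) by decomposing $F$ into summands whose monotonicity/concavity is inherited from the definition of $\cG$, item (2) by monotonicity of $F$ in each variable, item (3) by an intermediate-value argument exploiting the a priori bound $G_\beta^{g,\hat g}(\alpha,2)\le 2/\beta$. The only point where you genuinely diverge is the final claim of (3), that $a_0-T^{-1}$ is not an accumulation point of the solution set: the paper argues by continuity of $G$ (if $a_0-T^{-1}$ were a limit of solutions, continuity would force it to be a solution, contradicting $H(a_0-T^{-1})\neq 0$), whereas you derive the uniform lower bound $\alpha\ge a_0-T^{-1}+G(a_0-T^{-1},2)/(2T^2)$ directly from the monotonicity and positivity of $G$. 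Your route is arguably tighter since it produces an explicit quantitative gap and sidesteps any reliance on continuity of $G$, whose verification you carefully supply anyway via the $\varepsilon$-argument --- a step the paper merely asserts. Two small cosmetic remarks: in item (1) the paper spells out the second-derivative computation for $u\mapsto u^{1/2}h(u^{1/2})$, while you correctly observe this is already packaged into the defining property \eqref{eq:hyp_mcg} of $\cG$; and your parenthetical ``at least linearly when $\beta<+\infty$'' in item (2) is unnecessary, since the term $s/(T(1+T\alpha))$ already guarantees $F(\alpha,s)\to+\infty$ regardless of $\beta$.
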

\begin{proof}\,

\begin{enumerate}[wide, labelwidth=!, labelindent=0pt]
    \item Since   $\alpha>-T^{-1}$ and $r\mapsto r\,\tilde{g}(r)$ and $r\mapsto r\,g(r)$ are non-decreasing, we see that $r\mapsto \sqrt{r}\,g(\sqrt{r})$ and  $r\mapsto \nicefrac{\sqrt{r}}{1+T\alpha}\,\tilde{g}(\nicefrac{\sqrt{r}}{1+T\alpha})$  are too. This further implies that $s\mapsto F_\beta^{g,\tilde{g}}(\alpha,s)$ is an increasing function since we have
    \bes
    \frac{\De}{\De s}F_\beta^{g,\tilde{g}}(\alpha,s)\geq \beta+\frac1{T(1+T\alpha)} >0\,.
    \ees
    The concavity of $s\mapsto F_\beta^{g,\tilde{g}}(\alpha,s)$ is straightforward since sum of linear and (non-negative scalar multiples of) concave functions (since $g,\,\tilde{g}\in\cG$).

\item The proof is by contradiction.  Notice that $G_\beta^{g,\tilde{g}}(\cdot,2)$ is continuous on $(-T^{-1},+\infty)$ and assume that is not a positive function, which implies that there exists some $\alpha>-T^{-1}$ such that $G_\beta^{g,\tilde{g}}(\alpha,2)=0$ and hence by definition that there exists a sequence $(s_n)_{n\in\N}$ converging to zero and such that $F_\beta^{g,\tilde{g}}(\alpha,s)\geq 2$, which is clearly impossible since $\lim_{s\downarrow 0}F_{\beta}^{g,\tilde{g}}(\alpha,s)=0$. Hence $G_\beta^{g,\tilde{g}}(\cdot,2)$ is a positive function. The monotonicity of  $G^{g_\mu,\tilde{g}_\nu}_{\beta_\mu}(\cdot,2)$ follows from the fact that $F_{\beta}^{g,\tilde{g}}(\alpha,s)$ is increasing in $s$ and decreasing in $\alpha\in(-T^{-1},+\infty)$, which implies for any $\alpha'\geq\alpha$ and $u\geq 0$
\bes
\{s\eqsp\colon\eqsp F_{\beta}^{g,\tilde{g}}(\alpha',s)\geq u\}\subseteq \{s\eqsp\colon\eqsp F_\beta^{g,\tilde{g}}(\alpha,s)\geq u\}\eqsp.
\ees

\item 
Consider the map associated to the fixed-point problem \eqref{general:fixed:point}, \ie, the continuous function $H\colon [a_0-T^{-1},+\infty)\to\mathbb{R}$ defined for $\alpha\in(-T^{-1},+\infty)$ as
\bes
H(\alpha)\coloneqq \alpha- a_0+T^{-1}-\frac{G_\beta^{g,\tilde{g}}(\alpha,2)}{2\eqsp T^2} \eqsp.
\ees
Let us now prove that
\be\label{detail}
H(a_0-T^{-1})<0\quad\text{ and }\quad \lim_{\alpha\to+\infty} H(\alpha)=+\infty\eqsp.
\ee
The first inequality follows from the positivity of  $G_\beta^{g,\tilde{g}}(a_0-T^{-1},2)$, which has already been established in part $2$.
In order to prove the second statement it is enough noticing that $G_\beta^{g,\tilde{g}}(\cdot,2)$ is bounded, which is immediate since $g,\,\tilde{g}\geq 0$ implies for any $\alpha>-T^{-1}$ and $s> 0$ that $F_\beta^{g,\tilde{g}}(\alpha,s)\geq \beta\,s$ and hence that
\be\label{G:bounded}
 G_\beta^{g,\tilde{g}}(\alpha,2)\leq 2/\beta\eqsp.
\ee
From \eqref{detail} and the continuity of $G_\beta^{g,\tilde{g}}(\cdot,2)$ we finally deduce the existence of some $\bar \alpha\in(a_0-T^{-1},+\infty)$ such that
 $H(\bar\alpha)=0$, \ie, a fixed point for \eqref{general:fixed:point}. As a consequence \eqref{G:bounded} further implies $\bar\alpha \leq a_0-T^{-1}+(\beta\,T^2)^{-1}$. Finally $a_0-T^{-1}$ does not belong to the closure of the set of fixed-points solutions, because if this was the case then the continuity of $G_\beta^{g,\tilde{g}}(\cdot,2)$ would have implied $H(a_0-T^{-1})=0$, clearly in contrast with \eqref{detail}.
\end{enumerate}
\end{proof}

 As a corollary of the previous lemma we have already proven the following
 \begin{corollary}
        If \Cref{A:kappatilde}-$(i)$ holds true, then there exists at least one solution $\alpha^\star_\nu$ on $(\alpha_\nu-T^{-1},\alpha_\nu-T^{-1}+(\beta_\mu\,T^2)^{-1}]$ to the fixed point associated to \eqref{def:sistem:alpha:psi}. Moreover, if $\beta_\mu$ is finite then $\alpha_\nu-T^{-1}$ is not an accumulation point for the set of solutions. 

      Similarly if \Cref{A:kappatilde}-$(ii)$ holds true, there exists at least one solution $\alpha^\star_\mu$ on $(\alpha_\mu-T^{-1},\alpha_\mu-T^{-1}+(\beta_\nu\,T^2)^{-1}]$ to the fixed point associated to \eqref{def:sistem:alpha:varphi}. Moreover, if $\beta_\nu$ is finite then $\alpha_\mu-T^{-1}$ is not an accumulation point for the set of solutions. 
 \end{corollary}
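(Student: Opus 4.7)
The plan is to observe that the statement is essentially a direct specialization of the general fixed-point result in \Cref{appo:lemma2}(3), so no new machinery is required; the task is only to match the parameters correctly and invoke it twice.

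For the first part, I would set $a_0 = \alpha_\nu$, $\beta = \beta_\mu$, $g = g_\mu$, and $\hat{g} = \hat{g}_\nu$ in the abstract fixed-point equation \eqref{general:fixed:point}. Under \Cref{A:kappatilde}-$(i)$ one has $\alpha_\nu>0$, $\beta_\mu\in(0,+\infty]$, $g_\mu\in\cG$ and $\hat{g}_\nu\in\cmcg$, so the hypotheses of \Cref{appo:lemma2}(3) are met. With these substitutions the abstract recursion coincides verbatim with the defining relation \eqref{def:sistem:alpha:psi}, so the conclusion of \Cref{appo:lemma2}(3) immediately provides a solution $\alpha_\nu^\star\in(\alpha_\nu-T^{-1},\,\alpha_\nu-T^{-1}+(\beta_\mu T^2)^{-1}]$, together with the statement that $\alpha_\nu-T^{-1}$ does not accumulate solutions when $\beta_\mu<+\infty$.

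The second part is handled symmetrically: I would choose $a_0=\alpha_\mu$, $\beta=\beta_\nu$, $g=g_\nu$, and $\hat{g}=\hat{g}_\mu$, so that \eqref{general:fixed:point} becomes \eqref{def:sistem:alpha:varphi}, and \Cref{A:kappatilde}-$(ii)$ guarantees that all assumptions of \Cref{appo:lemma2}(3) are satisfied. A second invocation of that lemma then yields $\alpha_\mu^\star\in(\alpha_\mu-T^{-1},\,\alpha_\mu-T^{-1}+(\beta_\nu T^2)^{-1}]$ and the corresponding non-accumulation statement when $\beta_\nu<+\infty$. There is essentially no obstacle: the only point to check is that \Cref{A:kappatilde} delivers exactly the regularity ($g\in\cG$, $\hat{g}\in\cmcg$, $\beta\in(0,+\infty]$, $\alpha>0$) demanded by \Cref{appo:lemma2}, which is immediate from the definitions. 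Accordingly, this corollary is a bookkeeping step whose role is to record the two specific fixed-point existence results that will be used in the proof of \Cref{gio2022:thm}.
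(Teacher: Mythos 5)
Your proof is correct and matches the paper's intent exactly: the corollary is stated in the paper immediately after \Cref{appo:lemma2} with the remark that it follows ``as a corollary of the previous lemma,'' and your parameter matching ($a_0=\alpha_\nu$, $\beta=\beta_\mu$, $g=g_\mu$, $\hat g=\hat g_\nu$ for the first part, and symmetrically for the second) is precisely the specialization the authors have in mind.
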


The next result is the counterpart to \Cref{appo:lemma31} and it shows that we do also have an integrated propagation \emph{concavity-to-convexity}.

\begin{lemma}\label{appo:lemma33}
      If \Cref{A:kappatilde}-$(i)$ holds, $\alpha_{\nu,n}>-T^{-1}$ and if
    \be\label{upper:ell:varphi:n}
    \ell_{\varphi^{n+1}}(r)\leq -T^{-1}+r^{-2}\,F_{\beta_\mu}^{g_\mu,\tilde{g}_\nu}(\alpha_{\nu,n},r^2)\eqsp,
    \ee
    then $\alpha_{\nu,n+1}>-T^{-1}$ and for any $r>0$
\bes
\kappa_{\psi^{n+1}}(r)\geq \alpha_{\nu, n+1}-r^{-1}\,\tilde{g}_\nu(r)\eqsp.
\ees

  Similarly if \Cref{A:kappatilde}-$(ii)$ holds, $\alpha_{\mu,n}>-T^{-1}$ and  if
   \bes
    \ell_{\psi^{n}}(r)\leq -T^{-1}+r^{-2}\,F_{\beta_\nu}^{g_\nu,\tilde{g}_\mu}(\alpha_{\mu,n},r^2)\eqsp,
    \ees
    then $\alpha_{\mu,n+1}>-T^{-1}$ and for any $r>0$
    \bes
\kappa_{\varphi^{n+1}}(r)\geq \alpha_{\mu,n+1}-r^{-1}\,\tilde{g}_\mu(r)\eqsp.
\ees
\end{lemma}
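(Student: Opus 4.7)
The proof mirrors that of \Cref{appo:lemma31} in reverse direction, propagating an upper bound on the integrated concavity of $\varphi^{n+1}$ to a lower bound on the integrated convexity of $\psi^{n+1}$. I treat the case under \Cref{A:kappatilde}-$(i)$; the case \Cref{A:kappatilde}-$(ii)$ is analogous by swapping the roles of $\mu$ and $\nu$. Starting from $\psi^{n+1} = U_\nu + \log P_T \exp(-\varphi^{n+1})$ and using the subadditivity of $\kappa_{\cdot}$ under sums of functions together with the bound $\kappa_{U_\nu}(r) \geq \alpha_\nu - r^{-1}\hat g_\nu(r)$ from \Cref{A:kappatilde}-$(i)$, the problem reduces to showing the uniform-in-$r$ lower bound
\[
\kappa_{\log P_T \exp(-\varphi^{n+1})}(r) \geq -T^{-1} + \frac{G_{\beta_\mu}^{g_\mu,\hat g_\nu}(\alpha_{\nu,n},2)}{2T^2}.
\]

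Mimicking the rescaling trick used in \Cref{appo:lemma31}, I decompose $\varphi^{n+1}(x) = \frac{\alpha}{2}|x|^2 + \tilde\varphi(x)$ for a suitable constant $\alpha > -T^{-1}$ and set $T' = T/(1+T\alpha)$; completing the square inside $\log P_T \exp(-\varphi^{n+1})(y)$ yields
\[
\log P_T \exp(-\varphi^{n+1})(y) = -\frac{\alpha|y|^2}{2(1+T\alpha)} + \log P_{T'} \exp(-\tilde\varphi)\bigl(y/(1+T\alpha)\bigr) + \textrm{const.},
\]
so that $\kappa_{\log P_T\exp(-\varphi^{n+1})}(r) = -\alpha/(1+T\alpha) + (1+T\alpha)^{-2} \kappa_{\log P_{T'}\exp(-\tilde\varphi)}\bigl(r/(1+T\alpha)\bigr)$. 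Correspondingly, the hypothesis $\ell_{\varphi^{n+1}}(r) \leq -T^{-1} + r^{-2}F_{\beta_\mu}^{g_\mu,\hat g_\nu}(\alpha_{\nu,n}, r^2)$ translates into a suitable upper bound on $\ell_{\tilde\varphi}$ built from $g_\mu$ and $\hat g_\nu$, and matters are reduced to proving a lower bound on $\kappa_{\log P_{T'}\exp(-\tilde\varphi)}$.

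To establish this bound, I would invoke the identity~\eqref{eq:expl:grad} $\nabla^2 \log P_{T'}\exp(-\tilde\varphi)(y) = -(T')^{-1}\mathrm{Id} + (T')^{-2}\mathrm{Cov}(\pi_{T'}^{y,\tilde\varphi})$, so that the task amounts to a pointwise covariance lower bound $\mathrm{Cov}(\pi_{T'}^{y,\tilde\varphi}) \geq (G_{\beta_\mu}^{g_\mu,\hat g_\nu}(\alpha_{\nu,n},2)/2)\,\mathrm{Id}$, uniformly in $y$. I would prove this by representing $\pi_{T'}^{y,\tilde\varphi}$ as the invariant measure of the overdamped Langevin SDE~\eqref{eq:SDE:conditional} with $h=\tilde\varphi$ and synchronously coupling two such diffusions with parameters $y_1, y_2$; stationarity of $E[|X_\infty^{(1)}-X_\infty^{(2)}|^2]$ produces an identity into which the upper bound on $\ell_{\tilde\varphi}$ can be injected, and combining it with Jensen's inequality (using concavity of $s \mapsto F_{\beta}^{g,\hat g}(\alpha, s)$ established in \Cref{appo:lemma2}(1)) and with the characterisation of $G$ as the monotone inverse of $F$ yields the claim. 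Finally, the strict positivity $\alpha_{\nu,n+1} > -T^{-1}$ follows at once from $\alpha_\nu > 0$ (by \Cref{A:kappatilde}-$(i)$) and the strict positivity of $G_{\beta_\mu}^{g_\mu,\hat g_\nu}(\alpha_{\nu,n},2)$ (by \Cref{appo:lemma2}(2)). The main technical obstacle is extracting, from the one-sided integrated semiconcavity bound on $\tilde\varphi$, the correct two-sided lower bound on the covariance of $\pi_{T'}^{y,\tilde\varphi}$: this is a variant of a reverse Brascamp--Lieb-type estimate that has to be tailored to the precise structure of $F$ and $G$.
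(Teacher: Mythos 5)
Your reduction to a covariance lower bound for $\pi_{T'}^{y,\tilde\varphi}$ is the right target, and you correctly identify that concavity of $s\mapsto F_\beta^{g,\hat g}(\alpha,s)$ plus Jensen and the inverse relation between $F$ and $G$ must be the engine. However, the step you flag as "the main technical obstacle" is not a side issue to be filled in later: it is the whole content of the lemma, and the coupling route you sketch does not deliver it. Synchronously coupling two copies of the Langevin SDE does not preserve stationarity of the \emph{joint} law, so $\mathbb{E}[|X^{(1)}_t-X^{(2)}_t|^2]$ is not constant in $t$ and no identity falls out; and if you instead work with the genuine $d$-dimensional stationarity identity $\int \langle\nabla V,x\rangle\,\rho(\rmd x)=d$ (i.e.\ $\mathbb{E}[\langle\nabla V(X)-\nabla V(\hat X),X-\hat X\rangle]=2d$ for i.i.d.\ $X,\hat X\sim\rho$), injecting the integrated semiconcavity bound and Jensen only yields a lower bound on $\Tr \mathrm{Cov}(\rho)$. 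That is insufficient: you need a uniform directional lower bound $\langle v,\mathrm{Cov}(\rho)\,v\rangle\geq \tfrac12 G_{\beta_\mu}^{g_\mu,\hat g_\nu}(\alpha_{\nu,n},2)$ for every unit $v$, which does not follow from a trace bound.

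The paper bypasses this with two moves you are missing. First, it does not need your rescaling of $\varphi^{n+1}$: it works directly with $\hat\psi^{n+1}(y)\coloneqq T\psi^{n+1}(y)-TU_\nu(y)+\tfrac12|y|^2$, for which \eqref{eq:expl:grad} gives $\nabla^2\hat\psi^{n+1}(y)=T^{-1}\mathrm{Cov}(\pi_T^{y,\varphi^{n+1}})$ on the nose, so no completion of squares or auxiliary parameter $\alpha$ is required. Second — and this is the key idea — it obtains the directional covariance bound by the law of total variance: $\mathrm{Var}(X_1)\geq \mathbb{E}[\mathrm{Var}(X_1\mid X_2,\dots,X_d)]$, then applies the \emph{one-dimensional} integration-by-parts identity $1=\tfrac12\int(\partial_x V^{y,z}(x)-\partial_x V^{y,z}(\hat x))(x-\hat x)\,\pi(\rmd x|z)\pi(\rmd\hat x|z)$ to the conditional law (where $\partial_x V^{y,z}$ is a full directional derivative, so the integrated semiconcavity bound applies), and only then uses Jensen with the concavity of $F$ to get $\mathrm{Var}(X_1\mid X_2,\dots,X_d)\geq\tfrac12 G_{\beta_\mu}^{g_\mu,\hat g_\nu}(\alpha_{\nu,n},2)$ uniformly in the conditioning value. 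Finally, orthonormal invariance of \eqref{upper:ell:varphi:n} upgrades the bound for $X_1$ to a bound for $\mathrm{Var}(\langle v,X\rangle)$ for every unit $v$, giving the matrix inequality. Without the conditional-variance reduction to one dimension (and the replacement of a coupling argument by a genuine 1D integration-by-parts identity), your proof does not close.
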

\begin{proof}
    We are going to prove only the first part of the lemma since the second can be proven by an analogous reasoning. Firstly, let us consider the function
    \be\label{eq:def:hat:psi:n}
    \hat{\psi}^{n+1}(y)\coloneqq T\psi^{n+1}(y)-TU_\nu(y)+\frac{|y|^2}{2}\eqsp.
    \ee
    Then \eqref{eq:expl:grad} implies that its Hessian is given for any $y\in\bbRD$ by
    \be\label{eq:covariance}
    \nabla^2\hat{\psi}^{n+1}(y)=\frac1T\,\mathrm{Cov}(\pi_T^{y,\varphi^{n+1}})\eqsp,
    \ee
    where we recall from the definition of $\pi^{y,\varphi^{n+1}}_T$  \eqref{eq:cond_distr} that
     \bes
    \pi_T^{y,\varphi^{n+1}}(\De x)\propto\exp\biggl(-\frac{|x-y|^2}{2T}-\varphi^{n+1}(x)\biggr)\,\De x\eqsp.
    \ees
    Moreover, if we set for notations' sake
    $V^{y,n+1}\coloneqq -\log(\De \pi_T^{y,\varphi^{n+1}}/\De\Leb)$,
    then our assumption implies 
    \be\label{eq:appo:ell:conditional}
    \ell_{ V^{y,n+1}}(r)\leq r^{-2}\,F_{\beta_\mu}^{g_\mu,\tilde{g}_\nu}(\alpha_{\nu,n},r^2)\quad\forall\,r>0\eqsp.
    \ee

In order to prove the desired bound for $\kappa_{\psi^{n+1}}$, we will first establish a lower bound for the Hessian $\nabla^2\hat{\psi}^{n+1}$, \ie, a lower bound for the covariance matrix \eqref{eq:covariance}. In view of that, let us consider for any fixed $y\in\bbRD$, the variance $\mathrm{Var}_{X\sim \pi_T^{y,\varphi^{n+1}}}(X_1)$ where $X_i$ denotes the $i^{th}$ scalar component of the random vector $X\sim \pi_T^{y,\varphi^{n+1}}$. Next, observe that
\be\label{dettaglio}
\mathrm{Var}_{X\sim \pi_T^{y,\varphi^{n+1}}}(X_1)\geq \mathbb{E}_{X\sim \pi_T^{y,\varphi^{n+1}}}[\mathrm{Var}_{X\sim \pi_T^{y,\varphi^{n+1}}}(X_1| X_2,\,\dots,\,X_d)]\eqsp,
\ee
and notice that for any given $z=(z_2,\,\dots,\,z_d)\in\rset^{d-1}$ it holds
\bes
\mathrm{Var}_{X\sim \pi_T^{y,\varphi^{n+1}}}(X_1| X_2=z_d,\,\dots,\,X_d=z_d)=\frac12\,\int_{\rset^2}|x-\hat{x}|^2\eqsp\pi_T^{y,\varphi^{n+1}}(\De x|z)\,\pi_T^{y,\varphi^{n+1}}(\De \hat{x}|z)
\ees
where  $(y,z,A)\mapsto \pi_T^{y,h}(A|z)$ is the Markov kernel on $\bbRD\times\rset^{d-1}\times\mathcal{B}(\rset)$ whose transition density w.r.t.\ Lebesgue measure is proportional to $\exp(-V^{y,n+1}(x,z))$. If we set $V^{y,z}(x)\coloneqq V^{y,n+1}(x,z)$ we then have, uniformly in $z\in\rset^{d-1}$,
\bes\begin{split}
1=&\frac12\int (\partial_x V^{y,z}(x)-\partial_x V^{y,z}(\hat{x}))(x-\hat{x})\eqsp\pi_T^{y,\varphi^{n+1}}(\De x|z)\,\pi_T^{y,\varphi^{n+1}}(\De \hat{x}|z)\\
=&\frac12 \int\langle \nabla V^{y,n+1}(x,z)-\nabla V^{y,n+1}(\hat{x}),\eqsp(x,z)-(\hat{x},z)\rangle\eqsp\pi_T^{y,\varphi^{n+1}}(\De x|z)\,\pi_T^{y,\varphi^{n+1}}(\De \hat{x}|z)\\
\overset{\eqref{eq:appo:ell:conditional}}{\leq}&\, \frac12\int F_{\beta_\mu}^{g_\mu,\tilde{g}_\nu}(\alpha_{\nu,n},|x-\hat{x}|^2)\eqsp\pi_T^{y,\varphi^{n+1}}(\De x|z)\,\pi_T^{y,\varphi^{n+1}}(\De \hat{x}|z)\\
\leq&\, F_{\beta_\mu}^{g_\mu,\tilde{g}_\nu}\biggl(\alpha_{\nu,n},2\,\mathrm{Var}_{X\sim \pi_T^{y,\varphi^{n+1}}}(X_1| X_2=z_d,\,\dots,\,X_d=z_d)\biggr)\eqsp,
\end{split}\ees
where the last step follows from the concavity of 
$s\mapsto F_{\beta_\mu}^{g_\mu,\tilde{g}_\nu}(\alpha_{\nu,n},s)$ (cf. \Cref{appo:lemma2}) and Jensen's inequality. By combining the above with \eqref{dettaglio}, since  $\alpha_{\nu,n}>-T^{-1}$ and $F_{\beta_\mu}^{g_\mu,\tilde{g}_\nu}(\alpha_{\nu,n},\cdot)$ is increasing (cf. \Cref{appo:lemma2}), we deduce by definition of $G_{\beta_mu}^{g_\mu,\tilde{g}_\nu}$ that
 \bes
\mathrm{Var}_{X\sim \pi_T^{y,\varphi^{n+1}}}(X_1)\geq \frac12\,G_{\beta_\mu}^{g_\mu,\tilde{g}_\nu}(\alpha_{\nu,n},2)\eqsp.
 \ees

Since the definition $\ell_U$ is invariant under orthonormal transformation, for any orthonormal matrix $O$ the functions $\varphi^{n+1}(O\,\cdot)$ satisfy the condition \eqref{upper:ell:varphi:n} too. The previous bound and this observation leads to 
\bes
\mathrm{Var}_{X\sim \pi_T^{y,\varphi^{n+1}}}(\langle v,\,X\rangle)\geq  \frac12\,G_{\beta_\mu}^{g_\mu,\tilde{g}_\nu}(\alpha_{\nu,n},2)\quad \forall\,y,\,v\in\bbRD\eqsp\text{s.t. }|v|=1\eqsp,
\ees
and hence from \eqref{eq:covariance} we finally deduce
\be\label{eq:lower:hess_psi_hat}
\langle v,\,\nabla^2\hat{\psi}^{n+1}(y)\,v\rangle\geq G_{\beta_\mu}^{g_\mu,\tilde{g}_\nu}(\alpha_{\nu,n},2)\,\frac{|v|^2}{2T}\quad \forall\,y,\,v\in\bbRD\eqsp.
\ee
By recalling \eqref{eq:def:hat:psi:n} and \eqref{def:sistem:alpha:psi}, the above bound concludes our proof.    
\end{proof}

\begin{remark}[A first trivial lower bound]\label{initial:assumption}
Under \Cref{A:kappatilde}, the above discussion already provides a first trivial lower bound for $\kappa_{\psi^n}$. Indeed  \eqref{eq:covariance} tells us that $\hat{\psi}^{n+1}$ is a convex function for any $n\geq 0$, which combined with \eqref{eq:def:hat:psi:n} yields to for $n\geq 0$
\bes
\kappa_{\psi^{n+1}}(r)\geq \alpha_\nu-T^{-1}-r^{-1}\tilde{g}_\nu(r)\eqsp.
\ees
Therefore in \Cref{gio2022:thm} we could consider any initialization $\psi^0$ without prescriptions on its behaviour and run an iteration of Sinkhorn in order to get $\alpha_{\nu,1}\geq \alpha_\nu-T^{-1}$. At this point once can proceed again with the proof of \Cref{gio2022:thm} with the same (but shifted by $-1$) sequence of parameters $(\alpha_{\nu,n})_{n\in\N}$.

Let us notice that the same discussion holds for the sequence of $\varphi^n$, which yields to for $n\geq 0$
\bes
\kappa_{\varphi^{n+1}}\geq \alpha_\mu-T^{-1}-r^{-1}\tilde{g}_\mu(r)\eqsp,
\ees
which for $n=0$ gives for free the base step of the iteration \eqref{def:sistem:alpha:varphi}.

Finally, let us remark here that since the potentials couple $(\varphistar,\psistar)$ can be thought as a constant sequence of Sinkhorn iterates, the above discussion proves also that
\bes
\kappa_{\psistar}(r)\geq \alpha_\nu-T^{-1}-r^{-1}\tilde{g}_\nu(r)\quad\text{ and }\quad\kappa_{\varphistar}\geq \alpha_\mu-T^{-1}-r^{-1}\tilde{g}_\mu(r)\eqsp.
\ees
   \end{remark}

Before proving the main result of this section, let us comment on the differences between
\Cref{appo:lemma31} and \Cref{appo:lemma33}. Both results show how applying the heat semigroup improves (weak) convexity and concavity properties. Actually, since the function $u_s\coloneqq -\log P_{T-s}\exp(-h)$ solves the HJB equation
\bes\begin{cases}
\partial_s u_s+\frac12\Delta_s u_s-\frac12|\nabla u_s|^2=0\\
u_T=h\,,
\end{cases}
\ees
both can be seen as backpropagation results for (weak) convexity and concavity along HJB. Particularly, the core of \Cref{appo:lemma31} is proving that the (weak) convexity of $\psi^n$  back-propagates to $-\log P_{T}\exp(-\psi^n)$, from which we then deduce the (weak) concavity of $\varphi^{n+1}$.  In the strongly log-concave setting this result was already known (see for instance the proof of \cite[Theorem 4]{chewi2022entropic}) and it can be easily deduced from Brascamp-Lieb inequality (see e.g. \cite[Lemma 2]{chewi2022entropic}). Indeed if $\nabla^2\psi^n\geq \alpha_{\nu,n}$ then 
\bes
\nabla^2\biggl(-\log \frac{\De\pi_T^{x,\psi^n}}{\De\Leb}\biggr)= T^{-1}+\nabla^2\psi^n\geq T^{-1}+\alpha_{\nu,n}\eqsp,
\ees
which combined with \eqref{eq:expl:grad} and Brascamp-Lieb inequality yields to
\bes
-\nabla^2 \log P_T\rme^{-\psi^n}(x) = T^{-1}\operatorname{Id}- T^{-2}\,\mathrm{Cov}(\pi_T^{x,\psi^n})\geq T^{-1} - \frac{T^{-1}}{1+T\,\alpha_{\nu,n}}=\frac{\alpha_{\nu,n}}{1+T\,\alpha_{\nu,n}}\eqsp.
\ees
This is exactly what our computations in the proof of \Cref{appo:lemma31} yields to in the strongly log-concave case. Therefore \Cref{appo:lemma31} (combined with \Cref{thm:HJB}) can be considered as a generalisation of the Brascamp-Lieb argument to the weak setting, via a coupling by reflection approach (which stems at the core of the proof of \Cref{thm:HJB}, see \cite[Theorem 2.1]{conforti2022quadratic}).

Similarly, the core of \Cref{appo:lemma33} is proving that the (weak) concavity of $\varphi^{n+1}$  back-propagates to $-\log P_{T}\exp(-\varphi^{n+1})$, from which we then deduce the (weak) convexity of $\psi^{n+1}$. In the strongly log-convex setting, this reduces to applying the Cram\'er-Rao inequality (see Lemma 2 and the proof of Theorem 4 in~\cite{chewi2022entropic}). Indeed in this setting the assumption of \Cref{appo:lemma33} can be rewritten as $\nabla^2\varphi^{n+1}\leq \beta_{n+1}-T^{-1}$ for some $\beta_{n+1}>0$ , which implies 
\bes
\nabla^2\biggl(-\log \frac{\De\pi_T^{y,\varphi^{n+1}}}{\De\Leb}\biggr)= T^{-1}+\nabla^2\varphi^{n+1}\leq \beta_{n+1}\eqsp.
\ees
The latter, combined with \eqref{eq:expl:grad} and the Cram\'er-Rao inequality yields to
\bes
-\nabla^2\log P_T\rme^{-\varphi^{n+1}}(y)=T^{-1}- T^{-2}\,\mathrm{Cov}(\pi_T^{x,\psi^n})\leq T^{-1}-T^{-2}\beta_{n+1}^{-1}\eqsp.
\ees
This last bound implies $\nabla^2\hat{\psi}^{n+1}\geq (T\beta_{n+1})^{-1}$ which coincides with what we have shown in~\eqref{eq:lower:hess_psi_hat}, in this particular setting. Therefore  \Cref{appo:lemma33} can be seen as a generalisation of Cram\'er-Rao inequality to the weak setting.

\medskip 

By consecutively applying \Cref{appo:lemma31} and \Cref{appo:lemma33} we are finally able to prove how  lower-bounds for  integrated convexity profiles propagate along Sinkhorn.

\begin{proof}[Proof of \Cref{gio2022:thm}]
   Let us start showing the first statement. Consider the sequence $(\alpha_{\nu,n})_{n\in\N}$ defined in \eqref{def:sistem:alpha:psi}. We will prove our statement by induction. The case $n=0$ is met under the assumption $\kappa_{\psi^0}(r)\geq\alpha_\nu-T^{1}-r^{-1}\tilde{g}_\nu(r)$. 
The inductive step follows by applying consecutively \Cref{appo:lemma33} and \Cref{appo:lemma31}. As a direct consequence of item $(ii)$ in \Cref{appo:lemma2} we deduce that the sequence $(\alpha_{\nu,n})_{n\in\N}$ is non-decreasing and hence $\alpha_{\nu,n}\geq \alpha_{\nu,0}=\alpha_\nu-T^{-1}$. Since $G_{\beta_\mu}^{g_\mu,\tilde{g}_\nu}$ is continuous and $\alpha_\nu-T^{-1}$ is not an accumulation point for the set of solutions of \eqref{general:fixed:point} (cf. item $(iii)$ in \Cref{appo:lemma2}), we deduce that $\alpha_{\nu,n}> \alpha_{\nu,0}=\alpha_\nu-T^{-1}$ for $n\geq 1$ and that the same holds for its limit $\alpha_{\psistar}$. The upper bound on $\alpha_{\nu,n}$ comes for free from \eqref{def:sistem:alpha:psi} and the upper bound \eqref{G:bounded}. 
The proof of \eqref{eq_bound_gio_teo:psi:limit} is obtained in the same way by considering the (constant) Sinkhorn iterates $(\varphistar,\psistar)$ with the same sequence of $(\alpha_{\nu,n})_{n\in\N}$.

 The proof of the second statement  is completely analogous and for this reason we omit it. The only difference here relies in proving that the base case $n=1$ holds true, but this has been already proven in the discussion of \Cref{initial:assumption}.
\end{proof}

\section{Proof of the main results}

As we have already done in \Cref{sec:main:results}, we will split the discussion in two cases: firstly we address the strongly log-concave case and then we generalise the results to the weakly log-concave setting.

\subsection{Exponential convergence for strongly log-concave marginals}\label{sec:proof:strong}

 Let us firstly observe that \Cref{gio2022:thm} in this particular setting simply reads as 

\begin{theorem}\label{gio2022:thm:concave}
Assume \Cref{A:log:concave:doppio}. Then there exist two monotone increasing sequences $(\alpha_{\mu,n})_{n\in\N}\subseteq(\alpha_\mu-T^{-1},\alpha_\mu-T^{-1}+(\beta_\nu\,T^2)^{-1}]$ and $(\alpha_{\nu,n})_{n\in\N}\subseteq(\alpha_\nu-T^{-1},\alpha_\nu-T^{-1}+(\beta_\mu\,T^2)^{-1}]$ such that for any $n\geq 1$  for any $n\in\N$ it holds $r>0$ it holds
\bes
\kappa_{\varphi^n}(r)\geq \alpha_{\mu,n}\quad\text{ and }\quad\kappa_{\psi^n}(r)\geq \alpha_{\nu,n}\eqsp.
\ees
These two sequences are defined as
\bes
\begin{cases}
\alpha_{\mu,0}\coloneqq\alpha_\mu-T^{-1}\eqsp,\\
\alpha_{\mu,n+1}\coloneqq \alpha_\mu-T^{-1}+\biggl(T^2\,\beta_\nu+(\alpha_{\mu,n}+T^{-1})^{-1}\biggr)^{-1}\eqsp,\quad n\in\N\eqsp,
\end{cases}
\ees
and
\bes
\begin{cases}
\alpha_{\nu,0}\coloneqq\alpha_\nu-T^{-1}\eqsp,\\
\alpha_{\nu,n+1}\coloneqq \alpha_\nu-T^{-1}+\biggl(T^2\,\beta_\mu+(\alpha_{\nu,n}+T^{-1})^{-1}\biggr)^{-1}\eqsp,\quad n\in\N\eqsp.
\end{cases}
\ees
Moreover, both sequences converge respectively to
\be\label{iden:limit:concave}\begin{aligned}
\alpha_{\varphistar}\coloneqq &\,\frac12\biggl(\alpha_\mu+\sqrt{\alpha_\mu^2+4\alpha_\mu/(T^2\beta_\nu)}\biggr)-T^{-1}
\eqsp,\\
\alpha_{\psistar}\coloneqq&\, \frac12\biggl(\alpha_\nu+\sqrt{\alpha_\nu^2+4\alpha_\nu/(T^2\beta_\mu)}\biggr)-T^{-1}\eqsp,
\end{aligned}
\ee
and for any $r >0$ it holds 
\bes
\kappa_{\varphistar}(r)\geq \alpha_{\varphistar}\quad\text{ and }\quad\kappa_{\psistar}(r) > \alpha_{\psistar}\eqsp,
\ees
where $\varphistar$ and $\psistar$ are the Schr\"odinger potentials introduced in \eqref{eq:sch_potentials}.
\end{theorem}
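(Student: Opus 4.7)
The plan is to derive \Cref{gio2022:thm:concave} as a direct specialization of \Cref{gio2022:thm} to the setting where all auxiliary functions $g_\mu, g_\nu, \hat{g}_\mu, \hat{g}_\nu$ vanish identically. The first step is to verify that \Cref{A:log:concave:doppio} implies \Cref{A:entropitilde} and \Cref{A:kappatilde} with this choice: the Hessian bounds $\alpha_\zeta \leq \nabla^2 U_\zeta \leq \beta_\zeta$ directly yield $\kappa_{U_\zeta}(r) \geq \alpha_\zeta$ and $\ell_{U_\zeta}(r) \leq \beta_\zeta$ for $\zeta \in \{\mu,\nu\}$, while finite Lebesgue entropy follows from strong log-concavity by Gaussian comparison. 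The initial-condition requirement \eqref{kappa:psi0} of the general theorem need not be imposed as an extra hypothesis: by \Cref{initial:assumption}, one Sinkhorn iteration always produces $\kappa_{\psi^1}(r) \geq \alpha_\nu - T^{-1}$, so \Cref{gio2022:thm} applies after a harmless index shift; alternatively, the natural choice $\psi^0 = U_\nu$ already satisfies \eqref{kappa:psi0}.

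Next, I would specialize the auxiliary functions $F_\beta^{g,\hat g}$ and $G_\beta^{g,\hat g}$ from \eqref{def:F:G}. With zero $g$'s,
\[
F_\beta(\alpha,s) \;=\; s\left(\beta + \frac{1}{T(1+T\alpha)}\right),
\]
which is linear in $s$; inverting $F_\beta(\alpha,s) = 2$ gives the explicit form $G_\beta(\alpha,2) = 2\bigl(\beta + \tfrac{1}{T(1+T\alpha)}\bigr)^{-1}$. Substituting into the general recursion \eqref{def:sistem:alpha:psi} and using the identity $\tfrac{1}{T(1+T\alpha)} = \tfrac{1}{T^2(\alpha + T^{-1})}$ yields after elementary algebra the closed-form recursion
\[
\alpha_{\nu,n+1} \;=\; \alpha_\nu - T^{-1} + \Big(T^2\beta_\mu + (\alpha_{\nu,n} + T^{-1})^{-1}\Big)^{-1},
\]
with the symmetric formula for $(\alpha_{\mu,n})$. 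Monotonicity, strict positivity of $\alpha_{\nu,n} - (\alpha_\nu - T^{-1})$ for $n \geq 1$, and the upper bounds $\alpha_{\nu,n} \leq \alpha_\nu - T^{-1} + (\beta_\mu T^2)^{-1}$ then follow directly from \Cref{gio2022:thm} together with \Cref{appo:lemma2}.

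Finally, I would identify the limits by solving the fixed-point equations. Setting $\gamma = \alpha + T^{-1}$, the equation $\alpha = \alpha_\nu - T^{-1} + (T^2\beta_\mu + 1/\gamma)^{-1}$ rearranges to the quadratic
\[
T^2\beta_\mu \gamma^2 - T^2\beta_\mu \alpha_\nu \gamma - \alpha_\nu \;=\; 0,
\]
whose unique positive root is $\gamma = \tfrac{1}{2}\!\left(\alpha_\nu + \sqrt{\alpha_\nu^2 + 4\alpha_\nu/(T^2\beta_\mu)}\right)$; substituting back $\alpha = \gamma - T^{-1}$ matches exactly the closed form claimed for $\alpha_{\psistar}$ in \eqref{iden:limit:concave}. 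The expression for $\alpha_{\varphistar}$ follows by exchanging the roles of $\mu,\nu$, and the strict inequality $\kappa_{\psistar}(r) > \alpha_{\psistar}$ at the limit is inherited verbatim from \eqref{eq_bound_gio_teo:psi:limit}. No substantial obstacle arises in this specialization: the only genuine computations are the algebraic simplification of $F$ and $G$ and the solution of a single quadratic equation.
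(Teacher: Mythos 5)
Your proposal is correct and follows essentially the same approach as the paper: specialize \Cref{gio2022:thm} to the case $\tilde g_\mu = \tilde g_\nu = g_\mu = g_\nu \equiv 0$, obtain the explicit recursion via the resulting closed form of $F$ and $G$, and identify the limits by solving the quadratic fixed-point equation for $\gamma = \alpha + T^{-1}$. You give slightly more explicit detail than the paper's proof (e.g., algebraically deriving $G_\beta(\alpha,2)$, writing out the quadratic, and invoking \Cref{initial:assumption} to dispatch the initialization hypothesis~\eqref{kappa:psi0}), but the argument and its key computation are the same.
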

\begin{proof}
This is a particular instance of \Cref{gio2022:thm} when ${\tilde g}_\mu={\tilde g}_\nu={g}_\mu={g}_\nu\equiv 0$. The only statement that does not follow from that theorem is the identification of the limit values $\alpha_{\varphistar}$ and $\alpha_{\psistar}$ in \eqref{iden:limit:concave}. We will only prove the first one since the second identity can be proven in the same way. From \Cref{gio2022:thm} we already know that $\alpha_{\mu,n}\uparrow \alpha_{\varphistar}\in(\alpha_\mu-T^{-1},\alpha_\mu-T^{-1}+(\beta_\nu\,T^2)^{-1}]$. Consider the shifted sequence $\theta^\mu_n\coloneqq \alpha_{\mu,n}+T^{-1}$. Clearly $\theta^\mu_n>0$, $\theta^\mu_n\uparrow \theta^\mu_\infty\coloneqq \alpha_{\varphistar}+T^{-1}$ and the latter limit value can be seen as a fixed point for the iteration
\bes
\theta^\mu_{n+1}= \alpha_\nu+(T^2\,\beta_\nu+(\theta^\mu_n)^{-1})^{-1}\eqsp.
\ees
A straightforward computation shows that the there are just two possible fixed point solutions, namely
\bes
\frac12\biggl(\alpha_\mu-\sqrt{\alpha_\mu^2+4\alpha_\mu/(T^2\beta_\nu)}\biggr)
\eqsp\text{ and }\eqsp\frac12\biggl(\alpha_\mu+\sqrt{\alpha_\mu^2+4\alpha_\mu/(T^2\beta_\nu)}\biggr)\eqsp.
\ees
Since one solution is negative, whereas $(\theta_n^\mu)_{n\in\N}$ is a positive increasing sequence, we immediately deduce that $\theta_\infty^\mu$ equals the largest (and positive) fixed point. This proves \eqref{iden:limit:concave}.
\end{proof}

From the previous result we immediately deduce the
explicit expressions for the rates of convergence appearing in \eqref{it2.1} and \eqref{it2.2} in our proof strategy.

\begin{lemma}\label{lemma:contrazione:strong}
Assume \Cref{A:log:concave:doppio}. Then, for any $x \in\rset^d$, it holds
\be\label{eq:strong:contrazione}\begin{aligned}
\bfW^2_2(\pi_T^{x,\psi^n},\pi_T^{x,\psistar})\leq&\, (\gamma_n^\nu)^2 \int \abs{\nabla \psi^{n} - \nabla \psistar}^2 \rmd \pi_T^{x,\psistar}\,,\\
 \bfW_2^2(\pi_T^{x,\varphi^{n+1}},\pi_T^{x,\varphistar})\leq &\,(\gamma_{n+1}^\mu)^2 \int \abs{\nabla \varphi^{n+1} - \nabla \varphistar}^2 \rmd \pi_T^{x,\varphistar}\,,
\end{aligned}
\ee
with
\bes
\gamma_n^\nu=(\theta_n^\nu)^{-1}=(\alpha_{\nu,n}+T^{-1})^{-1}\quad\text{ and }\quad
\gamma_n^\mu=(\theta_n^\mu)^{-1}=(\alpha_{\mu,n}+T^{-1})^{-1}.
\ees
\end{lemma}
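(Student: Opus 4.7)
The plan is to deduce the estimate from the Bakry--\'Emery log-Sobolev and Talagrand $T_2$ transport inequalities for strongly log-concave measures, the strong convexity itself being supplied by \Cref{gio2022:thm:concave}. I present the argument for the first inequality of \eqref{eq:strong:contrazione}; the second is identical after swapping $(\mu,\varphi^{n+1},\varphistar)$ with $(\nu,\psi^n,\psistar)$ and $\theta^\nu_n:=\alpha_{\nu,n}+T^{-1}$ with $\theta^\mu_{n+1}:=\alpha_{\mu,n+1}+T^{-1}$.

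First I observe that $\pi_T^{x,h}\propto \rme^{-V^{x,h}}$ with $V^{x,h}(y):=\abs{y-x}^2/(2T)+h(y)$. Under \Cref{A:log:concave:doppio}, \Cref{gio2022:thm:concave} gives $\kappa_{\psi^n}\geq\alpha_{\nu,n}$, so that $V^{x,\psi^n}$ is uniformly $\theta^\nu_n$-strongly convex. By the Bakry--\'Emery criterion, the measure $\pi_T^{x,\psi^n}$ then satisfies a logarithmic Sobolev inequality with constant $\theta^\nu_n$, and by the Otto--Villani theorem it also satisfies Talagrand's $T_2$ inequality with the same constant. Chaining these two yields, for any probability measure $\eta\ll\pi_T^{x,\psi^n}$,
\begin{equation*}
\bfW_2^2(\pi_T^{x,\psi^n},\eta)\leq\frac{2}{\theta^\nu_n}\scrH(\eta\mid\pi_T^{x,\psi^n})\leq\frac{1}{(\theta^\nu_n)^2}\int\Bigl|\nabla\log\tfrac{\rmd\eta}{\rmd\pi_T^{x,\psi^n}}\Bigr|^2\,\rmd\eta.
\end{equation*}

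Specialising to $\eta:=\pi_T^{x,\psistar}$, a direct computation from the explicit densities gives the log-density ratio $\nabla\log(\rmd\pi_T^{x,\psistar}/\rmd\pi_T^{x,\psi^n})=\nabla V^{x,\psi^n}-\nabla V^{x,\psistar}=\nabla\psi^n-\nabla\psistar$, whence the displayed chain delivers the claimed bound with $\gamma^\nu_n=(\theta^\nu_n)^{-1}$. The only technicality worth checking is finiteness of the Fisher-information integral; this is immediate because \Cref{A:log:concave:doppio} together with \Cref{gio2022:thm:concave} forces both $\psi^n$ and $\psistar$ to have gradients with at-most-linear growth, while the strongly log-concave measure $\pi_T^{x,\psistar}$ is sub-Gaussian. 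The main conceptual obstacle, namely propagation of the strong convexity of $V^{x,\psi^n}$ uniformly along the Sinkhorn iterates, is precisely what \Cref{gio2022:thm:concave} provides, so no further analytic input is required.
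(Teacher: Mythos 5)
Your proof is correct, and it coincides with the alternative route the paper itself notes at the end of its own proof of \Cref{lemma:contrazione:strong}, namely combining Talagrand's $T_2$ inequality with the Log-Sobolev inequality, both available since \Cref{gio2022:thm:concave} makes $V^{x,\psi^n}(y)=|y-x|^2/(2T)+\psi^n(y)$ uniformly $\theta^\nu_n$-strongly convex. The constants track exactly as you write: $T_2$ gives $\bfW_2^2(\pi_T^{x,\psi^n},\eta)\leq\tfrac{2}{\theta^\nu_n}\,\scrH(\eta\mid\pi_T^{x,\psi^n})$, LSI gives $\scrH(\eta\mid\pi_T^{x,\psi^n})\leq\tfrac{1}{2\theta^\nu_n}\int|\nabla\log(\rmd\eta/\rmd\pi_T^{x,\psi^n})|^2\,\rmd\eta$, and with $\eta=\pi_T^{x,\psistar}$ the log-density ratio reduces to $\nabla\psi^n-\nabla\psistar$, yielding the stated bound with $(\gamma^\nu_n)^2=(\theta^\nu_n)^{-2}$.

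The paper's primary argument, by contrast, is a synchronous coupling of the two overdamped Langevin diffusions whose stationary laws are $\pi_T^{x,\psi^n}$ and $\pi_T^{x,\psistar}$: strong convexity of $V^{x,\psi^n}$ makes the difference process contract at rate $\theta^\nu_n$, and the drift mismatch $\nabla\psi^n-\nabla\psistar$ evaluated along the $\psistar$-trajectory produces the right-hand side after Gronwall. The authors lead with coupling rather than functional inequalities because coupling is what survives in the weakly log-concave regime of \Cref{sec:proof:weak} and \Cref{new:sec}, where LSI and $T_2$ are no longer available and synchronous couplings are replaced by reflection couplings (at the cost of downgrading $\bfW_2$ to $\bfW_1$); the lemma you proved here is essentially the strongly convex warm-up for the reflection-coupling statement \Cref{lemma:expl:sticky}. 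In the strongly convex setting both routes are complete and deliver identical constants. Two minor remarks on your writeup: first, you do not actually need Otto--Villani, since strong $\theta^\nu_n$-log-concavity of $\pi_T^{x,\psi^n}$ implies $T_2$ directly (this is precisely \cite[Corollary 9.3.2]{bakry2013analysis}, the reference the paper cites), so LSI and $T_2$ can be taken independently; second, saying the measure satisfies LSI "with constant $\theta^\nu_n$" clashes with the paper's convention, which calls $\gamma^\nu_n=(\theta^\nu_n)^{-1}$ the LSI constant --- your displayed inequalities are nevertheless the correct ones.
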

\begin{proof}
 We will prove only the first contraction, since the second can be proven in the same way. \Cref{gio2022:thm:concave} implies the strict  convexity of the (negative) log-densities of $\pi_T^{x,\psi^n}$ and $\pi_T^{x,\psistar}$. Then by recalling that these two probabilities can be seen as the invariant measures for the SDEs 
 \be\label{sistema:invarianti}\begin{cases}
\De X_t=-\biggl(\frac{X_t-x}{2T}+\frac12\nabla \psi^n(X_t)\biggr)\De t+\De B_t\quad \text{with }X_0\sim\pi_T^{x,\psi^n}\,,\\
\De Y_t=-\biggl(\frac{Y_t-x}{2T}+\frac12\nabla \psistar(Y_t)\biggr)\De t+\De B_t\quad \text{with }Y_0\sim\pi_T^{x,\psistar}\,,
 \end{cases}\ee
 the contraction estimate simply follows from a straightforward application of synchronous coupling. 

Let us mention here that \eqref{eq:strong:contrazione} can also be proven by combining Talagrand transport inequality \cite[Corollary 9.3.2]{bakry2013analysis} with Log-Sobolev inequality \cite[Corollary 5.7.2]{bakry2013analysis}.
\end{proof}

Next, notice that from \Cref{gio2022:thm:concave} we can further deduce that the contraction rates obtained above clearly satisfy
\bes
 \begin{cases}
\gamma_0^\mu\coloneqq\alpha_\mu^{-1}\\
\gamma^\mu_{k+1}\coloneqq (\alpha_\mu+(T^2\,\beta_\nu+\gamma^\mu_k)^{-1})^{-1} 
\end{cases}\eqsp\text{ and }\quad \begin{cases}
\gamma_0^\nu\coloneqq\alpha_\nu^{-1}\\
\gamma^\nu_{k+1}\coloneqq (\alpha_\nu+(T^2\,\beta_\mu+\gamma^\nu_k)^{-1})^{-1} \eqsp,
\end{cases}
\ees
and hence are  increasing sequences which converge to the limits 
\bes
\gamma^\mu_\infty\coloneqq 2\biggl(\alpha_\mu+\sqrt{\alpha_\mu^2+4\alpha_\mu/(T^2\beta_\nu)}\biggr)^{-1}
\quad\text{ and }\quad\gamma^\nu_\infty\coloneqq 2\biggl(\alpha_\nu+\sqrt{\alpha_\nu^2+4\alpha_\nu/(T^2\beta_\mu)}\biggr)^{-1}\eqsp.
\ees

Our first main result will then follow by concatenating the previous contraction estimates, as we explain in the following proof.
\begin{theorem}\label{thm:strong:conv}
     Assume \Cref{A:log:concave:doppio}. Then for any $n\in\N$ it holds 
      \be\label{eq:integrated:thm:strong}
\begin{aligned}
\|\nabla \varphi^{n} - \nabla\varphistar\|_{\rmL^2(\mu)} \leq &\eqsp\frac{T}{\gamma_{n}^\mu}\eqsp\prod_{k=0}^{n-1}\frac{\gamma_{k+1}^\mu\eqsp\gamma_k^\nu}{T^{2}}\eqsp \|\nabla \psi^0 - \nabla\psistar\|_{\rmL^2(\nu)}  \eqsp,\\
\|\nabla \psi^{n} - \nabla\psistar\|_{\rmL^2(\nu)} \leq&\eqsp\prod_{k=0}^{n-1}\frac{\gamma_{k+1}^\mu\eqsp\gamma_k^\nu}{T^{2}}\eqsp \|\nabla \psi^0 - \nabla\psistar\|_{\rmL^2(\nu)}   \eqsp,
\end{aligned}
\ee
and 
\be\label{eq:W2:thm}
\begin{aligned}
\bfW_2(\pi^{n,n},\,\pi^\star)+\bfW_2(\pi^{n+1,n},\,\pi^\star)\leq &\eqsp (T+\gamma_n^\nu)\eqsp\prod_{k=0}^{n-1}\frac{\gamma_k^\mu\eqsp\gamma_k^\nu}{T^{2}}\eqsp \ \|\nabla \psi^0 - \nabla\psistar\|_{\rmL^2(\nu)}   \eqsp.
\end{aligned}
\ee
As a consequence, the following entropic bounds hold
\be\label{eq:thm:entropy}
\begin{aligned}
\scrH(\pi^\star|\pi^{n,n})+\scrH(\pi^\star|\pi^{n+1,n})\leq &\biggl(\frac{T^2}{2\,\gamma_{n}^\mu}+\frac{\gamma^\nu_n}{2}\biggr)\eqsp\prod_{k=0}^{n-1}\frac{(\gamma_{k+1}^\mu\eqsp\gamma_k^\nu)^2}{T^{4}}\eqsp \|\nabla \psi^0 - \nabla\psistar\|_{\rmL^2(\nu)}^2 \eqsp.
\end{aligned}
\ee
Finally, for any 
    \be\label{T0:log:concave:again}
T>\frac{\beta_\mu\beta_\nu-\alpha_\mu\alpha_\nu}{\sqrt{\alpha_\mu\,\beta_\mu\,\alpha_\nu\,\beta_\nu\,(\alpha_\mu+\beta_\mu)(\alpha_\nu+\beta_\nu)}}
 \ee
    Sinkhorn's algorithm converges exponentially fast (in $\rmL^2,\,\bfW_2$ and $\scrH$ as in \eqref{eq:W2:exp:conv:general}) with exponential rate of convergence $\lambda\in(\nicefrac{\gamma_{\infty}^\mu\gamma_{\infty}^\nu}{T^2}, 1)$.
\end{theorem} 
\begin{proof}
 Recall that from~\eqref{eq:expl:grad} (which is proven in \Cref{prop:cov} in \Cref{sec:proof-crefprop:cov} under \Cref{A:entropy}, henceforth it is valid also under the stronger \Cref{A:log:concave:doppio}) we have
\bes
\begin{aligned}
\|\nabla \varphi^{n+1} - \nabla\varphistar\|^2_{\rmL^2(\mu)}=\|\nabla \log P_T e^{-\psi^n} - \nabla\log P_T e^{-\psistar}\|^2_{\rmL^2(\mu)}\\
\leq T^{-2}\,\int\bfW_2^2(\pi_T^{x,\psi^n},\pi_T^{x,\psistar})\eqsp \mu(\De x)\eqsp.
\end{aligned}\ees
Combining this bound with the contraction estimate \eqref{eq:strong:contrazione} gives 
\bes
\begin{aligned}
\|\nabla \varphi^{n+1} - \nabla\varphistar\|^2_{\rmL^2(\mu)}\leq \biggl(\frac{\gamma^{\nu}_{n}}{T}\biggr)^2\int |\nabla\psi^n-\nabla\psistar|^2(y)\pi_T^{x,\psistar}(\De y)\mu(\De x) \\
=\biggl(\frac{\gamma^{\nu}_{n}}{T}\biggr)^2\|\nabla\psi^n-\nabla\psistar\|^2_{\rmL^2(\nu)}\eqsp,
\end{aligned}\ees
where the last step holds since $\mu(\De x)\otimes \pi^{x,\psistar}_T(\De y)=\pistarT(\De x\De y)\in\Pi(\mu,\nu)$. Therefore we have shown that 
\bes
\|\nabla \varphi^{n+1} - \nabla\varphistar\|_{\rmL^2(\mu)}\leq 
\frac{\gamma^{\nu}_{n}}{T}\,\|\nabla\psi^n-\nabla\psistar\|_{\rmL^2(\nu)}\eqsp.
\ees
Repeating the same argument but exchanging the roles of $\psi^n,\psistar$ and $\varphi^n,\varphistar$ we obtain
\bes
\|\nabla \psi^{n} - \nabla\psistar\|_{\rmL^2(\nu)}\leq \frac{\gamma^{\mu}_{n-1}}{T}\,\| \nabla\varphi^n-\nabla\varphistar\|_{\rmL^2(\mu)}\eqsp.
\ees
Combining these last two bounds yields to \bes
\|\nabla \varphi^{n+1} - \nabla\varphistar\|_{\rmL^2(\mu)}
\leq \frac{\gamma^{\nu}_{n}\gamma^{\mu}_{n-1}}{T^2} \|\nabla \varphi^{n} - \nabla\varphistar\|_{\rmL^2(\mu)}\eqsp,
\ees
which iterated over $n$ gives the first estimate of~\eqref{eq:integrated:thm:strong}. The second one can be proven in the same way.
 The Wasserstein bound \eqref{eq:W2:thm} follows from \Cref{lemma:contrazione:strong} and \eqref{eq:integrated:thm:strong}.
    
    The entropic bound can be deduced as follows.
     Since $\pi^{n+1,n}\in\Pi(\mu,\star)$, the additive property of the relative entropy yields to
    \bes
    \scrH(\pistarT|\pi^{n+1,n})=\int \scrH( \pi_T^{x,\psistar}|\pi_T^{x,\psi^n})\,\mu(\De x)\,,
    \ees
    where $\pi_T^{x,\psi^n}$ and $\pi_T^{x,\psistar}$ are the conditional probability measures, \ie, such that  $\pi^{n+1,n}(\De x\De y)= \mu(\De x)\pi_T^{x,\psi^n}(\De y)$ and $\pistarT(\De x\De y)=\mu(\De x) \pi_T^{x,\psistar}(\De y)$ and whose  densities read as  
 \bes
    \pi^{x,h}_T(\De y)\propto \exp\biggl(-\frac{|y-x|^2}{2T}-h(y)\biggr) \De y \quad\text{ for }h\in\{\psistar,\,\psi^n\}\eqsp.
    \ees
    Next, notice that \Cref{gio2022:thm:concave} implies the log-concavity $\pi_T^{x,\psi^n}$ since 
    \be\label{eq:hess:log:density}
    \nabla^2(-\log\pi_T^{x,\psi^n}(\De y))=T^{-1}+\nabla^2\psi^n\geq T^{-1}+\alpha_{\nu,n}=(\gamma^\nu_n)^{-1}\,,
    \ee
    and therefore this measure satisfies the Log-Sobolev inequality with constant $\gamma^\nu_n$ \cite[Corollary 5.7.2]{bakry2013analysis}. This implies that
    \bes
    \begin{split}
\scrH(\pi_T^{x,\psistar}|\pi_T^{x,\psi^n})\leq \frac{\gamma^\nu_n}{2}\, \biggl\|\nabla\log\frac{\De \pi_T^{X,\psistar}}{\De \pi_T^{X,\psi^n}}\biggr\|^2_{\rmL^2(\pi_T^{x,\psistar})}
    =\frac{\gamma^\nu_n}{2}\,\|\nabla\psi^n-\nabla\psistar\|^2_{\rmL^2(\pi_T^{x,\psistar})}\,,
    \end{split}
    \ees
and therefore we have shown that 
\bes
\begin{aligned}
\scrH(\pistarT|\pi^{n+1,n})\leq &\,\frac{\gamma^\nu_n}{2}\,\int\|\nabla\psi^n-\nabla\psistar\|^2_{\rmL^2(\pi_T^{X,\psistar})}\,\mu(\De x)\\
=&\,\frac{\gamma^\nu_n}{2}\,\int \int |\nabla\psi^n-\nabla\psistar|^2(y)\,\De \pi_T^{x,\psistar}(\De y)\mu(\De x)\\
=&\,\frac{\gamma^\nu_n}{2}\,\int |\nabla\psi^n-\nabla\psistar|^2(y)\pistarT(\De x\De y)\\
=&\,\frac{\gamma^\nu_n}{2}\,\int |\nabla\psi^n-\nabla\psistar|^2\De\nu\\
=&\,\frac{\gamma^\nu_n}{2}\,\|\nabla\psi^n-\nabla\psistar\|^2_{\rmL^2(\nu)}\,.
\end{aligned}
\ees
 This proves 
 \bes
\scrH(\pi^\star|\pi^{n,n})\leq \frac{T^2}{2\,\gamma_{n}^\mu}\eqsp\prod_{k=0}^{n-1}\frac{(\gamma_{k+1}^\mu\eqsp\gamma_k^\nu)^2}{T^{4}}\eqsp \|\nabla \psi^0 - \nabla\psistar\|_{\rmL^2(\nu)}^2 \eqsp.
\ees
By reasoning in the same fashion we can prove that 
\bes
\scrH(\pi^\star|\pi^{n+1,n})\leq\frac{\gamma^\nu_n}{2}\eqsp\prod_{k=0}^{n-1}\frac{(\gamma_{k+1}^\mu\eqsp\gamma_k^\nu)^2}{T^{4}}\eqsp \|\nabla \psi^0 - \nabla\psistar\|^2_{\rmL^2(\nu)}   \eqsp,
\ees
which yields to \eqref{eq:thm:entropy}.

   Finally, the exponential convergence condition \eqref{T0:log:concave:again} holds if $ T^{-2}\gamma_{\infty}^\mu\,\gamma_{\infty}^\nu < 1$. This is equivalent to $T>\theta\,\gamma_\infty^\mu$ and $T>\theta^{-1}\,\gamma_\infty^\nu$ for some $\theta>0$,  \ie,  that for any $\theta\in(0,\infty)$ it holds 
\be\label{low:ineq}
\begin{aligned}
T>\theta\,\alpha_\mu^{-1}-\theta^{-1}\beta_\nu^{-1}\quad&\Leftrightarrow\quad T>\theta \,\gamma_\infty^\mu\eqsp,\\
T>\theta^{-1}\alpha_\nu^{-1}-\theta\,\beta_\mu^{-1}\quad&\Leftrightarrow\quad T>\theta^{-1}\,\gamma_\infty^\nu\eqsp.
\end{aligned}\ee
Therefore if 
\bes
\begin{split}
T>&\inf_{\theta\in(0,\infty)}\max\{\theta\,\alpha_\mu^{-1}-\theta^{-1}\beta_\nu^{-1},\,\theta^{-1}\alpha_\nu^{-1}-\theta\,\beta_\mu^{-1}\}
=\frac{\alpha_\mu^{-1}\alpha_\nu^{-1}-\beta_\mu^{-1}\beta_\nu^{-1}}{\sqrt{(\alpha_\mu^{-1}+\beta_\mu^{-1})(\alpha_\nu^{-1}+\beta_\nu^{-1})}}\\
=&\frac{\beta_\mu\beta_\nu-\alpha_\mu\alpha_\nu}{\sqrt{\alpha_\mu\,\beta_\mu\,\alpha_\nu\,\beta_\nu\,(\alpha_\mu+\beta_\mu)(\alpha_\nu+\beta_\nu)}}\eqsp,
\end{split}
\ees
then we are guaranteed that $T^2>\gamma_\infty^\mu\,\gamma_\infty^\nu$, whence the exponential convergence of Sinkhorn's algorithm. 
\end{proof}

\subsection{Exponential convergence for weakly log-concave marginals}\label{sec:proof:weak}

Now we generalise the previous results and adapt our proof strategy to the weakly log-concave setting. For exposition's sake we have postponed some results and derivations to \Cref{new:sec}.

Let us start with contraction estimates akin \Cref{lemma:contrazione:strong}.

\begin{lemma}\label{lemma:expl:sticky}
Assume \Cref{A:entropy}. Then, for any $x \in\rset^d$, it holds
\be\label{ineq:W1:psi}
\bfW_1(\pi_T^{x,\psi^n},\pi_T^{x,\psistar})\leq \gamma_n^\nu \int \abs{\nabla \psi^{n} - \nabla \psistar} \rmd \pi_T^{x,\psistar},
\ee
and similarly
\be\label{ineq:W1:phi}
\bfW_1(\pi_T^{x,\varphi^{n+1}},\pi_T^{x,\varphistar})\leq \gamma_{n+1}^\mu \int \abs{\nabla \varphi^{n+1} - \nabla \varphistar} \rmd \pi_T^{x,\varphistar},
\ee
where for any $\frp\in\{\mu,\nu\}$, and for any $n\in\N$ the contraction rate $\gamma^\frp_n$ is defined as
    \be\label{eq:def:rate:1}
    \gamma^\frp_n\coloneqq \frac{\tilde g_\frp{'}(0)^2}{\tilde g_\frp{'}\biggl(\norm{\tilde g_\frp}_\infty\biggl(\frac{1}{\tilde g_\frp{'}(0)}+\frac{2}{\alpha_{\frp,n}+T^{-1}}\biggr)\biggr)^2}\frac{1}{\alpha_{\frp,n}+T^{-1}+\tilde g_\frp{'}(0)}\eqsp
    \ee
    where  $(\alpha_{\mu,n})_{n\in\N}\subseteq (\alpha_\mu-T^{-1},\alpha_\mu-T^{-1}+(\beta_\nu\,T^2)^{-1}]$ and $(\alpha_{\nu,n})_{n\in\N}\subseteq (\alpha_\nu-T^{-1},\alpha_\nu-T^{-1}+(\beta_\mu\,T^2)^{-1}]$ are monotone increasing sequences, built in \Cref{gio2022:thm}.
\end{lemma}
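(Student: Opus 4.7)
The starting observation is that both $\pi_T^{x,\psi^n}$ and $\pi_T^{x,\psistar}$ are the unique invariant measures of the Langevin-type SDE \eqref{eq:SDE:conditional} associated, respectively, with the confining potentials
\begin{equation*}
V_n(y) = \frac{|y-x|^2}{2T}+\psi^n(y), \qquad V_\star(y) = \frac{|y-x|^2}{2T}+\psistar(y).
\end{equation*}
Existence and uniqueness of the corresponding strong solutions is guaranteed by \Cref{cor:lyap}. Invoking \Cref{gio2022:thm} and the monotonicity of the sequence $(\alpha_{\nu,n})_{n\in\N}$, both potentials share the same integrated convexity profile lower bound
\begin{equation*}
\kappa_{V_n}(r)\wedge \kappa_{V_\star}(r) \geq (\alpha_{\nu,n}+T^{-1})- r^{-1}\tilde g_\nu(r),
\end{equation*}
with $\tilde g_\nu\in\tmcg$. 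Since the two drift fields differ only by $\tfrac{1}{2}(\nabla\psi^n-\nabla\psistar)$, the setting falls exactly in the abstract framework of \Cref{new:sec}.

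The plan is then to appeal to the general $\bfW_1$-contraction result stated in \Cref{new:sec}, which is a perturbation estimate for the invariant measures of two Langevin SDEs that share the same diffusion coefficient and whose confining potentials admit a common asymptotic convexity profile of the form $\alpha-r^{-1}\tilde g(r)$ with $\tilde g\in\tmcg$. That result is obtained by running a synchronous/reflection coupling together with a concave Kantorovich distance function $f$ in the spirit of \cite{eberle2016reflection}. Taking as reference SDE the one associated with $\psistar$ and measuring the drift defect against $\pi_T^{x,\psistar}$ produces exactly \eqref{ineq:W1:psi}.

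The heart of the argument — and the step I expect to be technically most involved — is the explicit identification of the contraction constant $\gamma_n^\nu$ given by \eqref{eq:def:rate:1}. Following the Eberle construction, one chooses $f$ as a twice-differentiable concave majorant built from a primitive of $\tilde g_\nu$ and a cut-off at an effective non-contraction radius $M=\|\tilde g_\nu\|_\infty \bigl(1/\tilde g_\nu'(0)+2/(\alpha_{\nu,n}+T^{-1})\bigr)$. The factor $(\alpha_{\nu,n}+T^{-1}+\tilde g_\nu'(0))^{-1}$ captures the exponential contraction rate produced by the coupling, whereas the ratio $\tilde g_\nu'(0)^2/\tilde g_\nu'(M)^2$ quantifies the distortion between $f$ and the Euclidean distance on the range $[0,M]$ where concavity is active, hence translating the contraction obtained for $\int f(|y_1-y_2|)$ into one for $\bfW_1$. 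Optimising these quantities gives precisely \eqref{eq:def:rate:1}.

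Inequality \eqref{ineq:W1:phi} then follows by the completely symmetric argument: swap the roles of $(\mu,\nu)$, of the potentials $(\psi,\varphi)$ and of the profile functions $(\tilde g_\nu,\tilde g_\mu)$, and apply the second half of \Cref{gio2022:thm} to obtain the analogous convexity estimate for the potentials $y\mapsto |y-x|^2/(2T)+\varphi^{n+1}(y)$ and $y\mapsto |y-x|^2/(2T)+\varphistar(y)$.
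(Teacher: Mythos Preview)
Your proposal is correct and follows essentially the same route as the paper: apply the general perturbation estimate of \Cref{teo:general} in \Cref{new:sec} with $\frp=\pi_T^{x,\psi^n}$, $\frq=\pi_T^{x,\psistar}$, so that $\alpha_\frp=\alpha_{\nu,n}+T^{-1}$ and the drift defect integrated against $\frq$ is exactly $\tfrac12|\nabla\psi^n-\nabla\psistar|$; the constant $\gamma_n^\nu$ then drops out from the reflection-coupling/concave-distance computation with cutoff $R=\|\tilde g_\nu\|_\infty(1/\tilde g_\nu'(0)+2/(\alpha_{\nu,n}+T^{-1}))$, precisely as you describe. One clarification: in the paper's convention the measure carrying the integrated convexity profile is $\frp=\pi_T^{x,\psi^n}$ (not the one associated with $\psistar$), while $\frq=\pi_T^{x,\psistar}$ only needs coercivity---your formulas are consistent with this, but your phrase ``reference SDE the one associated with $\psistar$'' could be misread.
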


This result can be proven mimicking the ideas employed in \Cref{lemma:contrazione:strong}, this time considering a reflection coupling instead of the synchronous coupling in order to deal with the weak convexity of the energies associated to the invariant SDEs \eqref{sistema:invarianti}. For exposition's sake we defer the proof of the above lemma to \Cref{new:sec}.

\medskip

 Recall that the monotone sequences $(\alpha_{\mu,n})_{n\in\N}\subseteq (\alpha_\mu-T^{-1},\alpha_\mu-T^{-1}+(\beta_\nu\,T^2)^{-1}]$ and $(\alpha_{\nu,n})_{n\in\N}\subseteq (\alpha_\nu-T^{-1},\alpha_\nu-T^{-1}+(\beta_\mu\,T^2)^{-1}]$  built in \Cref{gio2022:thm} converges respectively to $\alpha_\varphistar$ and $\alpha_\psistar$. Therefore the contraction rates sequences $(\gamma^\mu_n)_{n\in\N}$ and $(\gamma^\nu_n)_{n\in\N}$ defined above converge respectively to the asymptotic rates 
 \bes
     \begin{aligned}
    \gamma^\mu_\infty\coloneqq &\eqsp\frac{\tilde g_\mu{'}(0)^2}{\tilde g_\mu{'}\biggl(\norm{\tilde g_\mu}_\infty\biggl(\frac{1}{\tilde g_\mu{'}(0)}+\frac{2}{\alpha_{\varphistar}+T^{-1}}\biggr)\biggr)^2}\frac{1}{\alpha_{\varphistar}+T^{-1}+\tilde g_\mu{'}(0)}\eqsp,\\
    \gamma^\nu_\infty\coloneqq &\eqsp\frac{\tilde g_\nu{'}(0)^2}{\tilde g_\nu{'}\biggl(\norm{\tilde g_\nu}_\infty\biggl(\frac{1}{\tilde g_\nu{'}(0)}+\frac{2}{\alpha_{\psistar}+T^{-1}}\biggr)\biggr)^2}\frac{1}{\alpha_{\psistar}+T^{-1}+\tilde g_\nu{'}(0)}\eqsp.
    \end{aligned}
    \ees

By concatenating the previous contraction estimates, and by combining them with our proof strategy we are finally able to prove our main result in the weakly log-concave setting.

\begin{theorem}\label{thm:integrated:convergence} 
Assume that \Cref{A:entropy} holds. Then for any $n\in\N$ it holds 
     \be\label{eq:integrated:thm}
\begin{aligned}
\|\nabla \varphi^{n} - \nabla\varphistar\|_{\rmL^1(\mu)} \leq &\eqsp\frac{T}{\gamma_{n}^\mu}\eqsp\prod_{k=0}^{n-1}\frac{\gamma_{k+1}^\mu\eqsp\gamma_k^\nu}{T^{2}}\eqsp \|\nabla \psi^0 - \nabla\psistar\|_{\rmL^1(\nu)}  \eqsp,\\
\|\nabla \psi^{n} - \nabla\psistar\|_{\rmL^1(\nu)} \leq&\eqsp\prod_{k=0}^{n-1}\frac{\gamma_{k+1}^\mu\eqsp\gamma_k^\nu}{T^{2}}\eqsp \|\nabla \psi^0 - \nabla\psistar\|_{\rmL^1(\nu)}   \eqsp,
\end{aligned}
\ee
and
\be\label{eq:W1:thm}
\begin{aligned}
\bfW_1(\pi^{n,n},\,\pi^\star)+\bfW_1(\pi^{n+1,n},\,\pi^\star)\leq &\eqsp (T+\gamma_n^\nu)\eqsp\prod_{k=0}^{n-1}\frac{\gamma_{k+1}^\mu\eqsp\gamma_k^\nu}{T^{2}}\eqsp \ \|\nabla \psi^0 - \nabla\psistar\|_{\rmL^1(\nu)}   \eqsp,
\end{aligned}
\ee
 where $(\gamma_k^\mu)_{k\in\N}$ and $(\gamma_k^\nu)_{k\in\N}$ are non-negative non-increasing sequences given in \eqref{eq:def:rate:1}. As a consequence, for any
  \be\label{eq:suff:T}
    T^2>\frac{\tilde g_\mu{'}(0)\,(\alpha_\mu+\tilde g_\mu{'}(0))^{-1}}{\tilde g_\mu{'}\biggl(\norm{\tilde g_\mu}_\infty\biggl(\frac{1}{\tilde g_\mu{'}(0)}+\frac{2}{\alpha_\mu}\biggr)\biggr)^2}\eqsp \frac{\tilde g_\nu{'}(0)\,(\alpha_\nu+\tilde g_\nu{'}(0))^{-1}}{\tilde g_\nu{'}\biggl(\norm{\tilde g_\nu}_\infty\biggl(\frac{1}{\tilde g_\nu{'}(0)}+\frac{2}{\alpha_\nu}\biggr)\biggr)^2}
    \eqsp.
    \ee
   Sinkhorn's algorithm converges exponentially fast (in $\rmL^1$ and $\bfW_1$ as in~\eqref{informal:convergence}) with exponential rate of convergence  $\lambda\in(\nicefrac{\gamma_{\infty}^\mu\gamma_{\infty}^\nu}{T^2},1)$.
\end{theorem}
\begin{proof}
    The proof of this result follows the same line portrayed in the proof of \Cref{thm:strong:conv}. The condition \eqref{eq:suff:T} is a sufficient condition for the exponential convergence of Sinkhorn's algorithm which holds
     as soon as $T^2> \gamma_\infty^\mu\,\gamma_\infty^\nu$.
\end{proof}

\begin{appendix}
\section{Wasserstein distance w.r.t\ a measure with log-concave profile}\label{new:sec}
In this section we consider two probability measures $\frp,\,\frq\in\cP(\bbRD)$  that can be again written with log-densities as
\bes
\frp(\De x)=\exp(-U_\frp(x))\De x\,,\qquad\frq(\De x)=\exp(-U_\frq(x))\De x\eqsp.
\ees

Throughout this section we will assume that $\frp\in\mcpalcd$ that we recall here means that $U_\frp$ has an integrated convex profile, \ie, that there exist some $\alpha_\frp>0$ and ${\tilde g}_\frp\in\tmcg$ such that
        \bes
\kappa_{U_\frp}(r)\geq \alpha_\frp-r^{-1}\,{\tilde g}_\frp(r)\quad \forall r>0\eqsp.
\ees
As far as regards $\frq\in\cP(\bbRD)$ we will solely assume $U_\frq\in\cC^1(\bbRD)$ to be a coercive, \ie, that there exist $\gamma_\frq>0$ and $R_\frq\geq 0$ such that
        \be\label{eq:def.Uq:coercive}
        -\frac12\langle \nabla U_\frq(x),x\rangle\leq -\gamma_\frq\,|x|^2\quad\forall\, |x|\geq R_\frq\eqsp. 
        \ee

Let us also emphasize here  that the convexity of integrated profile assumption is stronger than the coercivity, since the former implies
\bes
-\frac12\langle \nabla U_\frp(x),x\rangle\leq -\frac{\alpha_\frp}{2}\,|x|^2+\frac{|\nabla U_\frp(0)|+\| {\tilde g}_\frp\|_{\infty}}{2}|x|\eqsp,
\ees
and hence that the coercive condition holds
\bes
-\frac12\langle \nabla U_\frp(x),x\rangle\leq -\gamma_\frp |x|^2\quad\forall\, |x|\geq R_\frp
\ees
for some $\gamma_\frp>0$ and $R_\frp>0$. 
Notice that $\frp$ and $\frq$ can be seen as invariant measures of the corresponding SDEs
\be\label{SDE:p}
\begin{cases}\De X_t=-\frac12\nabla U_\frp(X_t)\,\De t+\De B_t\eqsp,\\
\De Y_t=-\frac12\nabla U_\frq(Y_t)\,\De t+\De B_t\eqsp,
\end{cases}\ee
which admit unique strong solutions, in view of the coercivity of the corresponding drifts and owing to  \cite[Theorem 2.1]{tweedie96}. Finally let $(P_t^\frp)_{t\geq 0}$ and $(P_t^\frq)_{t\geq 0}$ denote the corresponding Markov semigroups associated to the above SDEs. Since $\frp$ and $\frq$ are invariant measures we clearly have $\frp P^\frp_t=\frp$ and $\frq P_t^\frq=\frq$ for any $t\geq 0$.

\medskip

The main result of this section is showing how the Wasserstein distance between $\frp$ and $\frq$ can be bounded w.r.t. the integrated difference of the drifts appearing in \eqref{SDE:p}.

\begin{theorem}\label{teo:general}
Assume $\frp\in\mcpalcd$ and $U_\frq$ to be coercive (as in~\eqref{eq:def.Uq:coercive}). Then it holds
    \bes
    \begin{split}
    \bfW_1(\frp,\eqsp\frq)\leq& \biggl(\frac{\tilde g_\frp{'}(0)}{\tilde g_\frp{'}(R)}\biggr)^2\frac{1}{\alpha_\frp+\tilde g_\frp{'}(0)}\eqsp\int|\nabla U_\frp-\nabla U_\frq|\De\frq\\
    =&\biggl(\frac{\tilde g_\frp{'}(0)}{\tilde g_\frp{'}(R)}\biggr)^2\frac{1}{\alpha_\frp+\tilde g_\frp{'}(0)}\eqsp\int\left|\nabla \log\frac{\De\frp}{\De\frq}\right|\De\frq\eqsp,
    \end{split}
    \ees
    with $R\coloneqq \norm{\tilde g_\frp}_\infty((\tilde g_\frp{'}(0))^{-1}+2/\alpha_\frp)$. 
\end{theorem}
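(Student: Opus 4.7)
The plan is to prove this via a reflection-coupling argument in the spirit of Eberle, adapted to the case where the two diffusions have \emph{different} drifts rather than merely different initial distributions. First I would construct, on a common filtered probability space, the coupling $(X_t, Y_t)_{t \geq 0}$ of the SDEs in \eqref{SDE:p} by driving $Y$ with a Brownian motion $W$ and $X$ with $(I - 2 e_t e_t^{\top}) W$, where $e_t = (X_t - Y_t)/|X_t - Y_t|$, up to the coupling time (afterwards I would use synchronous coupling). I would initialize $X_0 = Y_0 \sim \frq$; under Assumption AO, $\frq$ is invariant for $P_t^{\frq}$ so $Y_t \sim \frq$ for all $t \geq 0$, while the integrated convexity profile of $U_\frp$ implies that $P_t^\frp$ is ergodic and $\mathrm{Law}(X_t) \Rightarrow \frp$ as $t \to +\infty$.

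Next, I would derive an SDE for the inter-particle distance $r_t := |X_t - Y_t|$. A standard Itô computation under reflection coupling shows that on $\{r_t > 0\}$ one has $dr_t = -\tfrac{1}{2 r_t}\langle X_t - Y_t, \nabla U_\frp(X_t) - \nabla U_\frq(Y_t)\rangle\, dt - 2\, dB_t$ for a 1D Brownian motion $B_t$, the Itô correction vanishing because the reflection-coupling diffusion is degenerate along $e_t$. Splitting $\nabla U_\frp(X_t) - \nabla U_\frq(Y_t) = [\nabla U_\frp(X_t) - \nabla U_\frp(Y_t)] + [\nabla U_\frp(Y_t) - \nabla U_\frq(Y_t)]$, applying the definition of $\kappa_{U_\frp}$ together with the lower bound from Assumption AO, and using Cauchy-Schwarz on the error term yields
\begin{equation*}
dr_t \leq \Bigl[-\tfrac{\alpha_\frp}{2}\, r_t + \tfrac12\, \tilde g_\frp(r_t) + \tfrac12 |\nabla U_\frp(Y_t) - \nabla U_\frq(Y_t)| \Bigr] dt - 2\, dB_t.
\end{equation*}

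The heart of the argument is the construction of a concave, non-decreasing, $\rmC^2$-smooth function $f \colon \rset_+ \to \rset_+$ with $f(0) = 0$ satisfying
\begin{equation*}
2 f''(r) + f'(r)\Bigl(\tfrac12 \tilde g_\frp(r) - \tfrac{\alpha_\frp}{2}\, r\Bigr) \leq -c\, f(r), \qquad r \geq 0,
\end{equation*}
with sharp constants $c = (\alpha_\frp + \tilde g_\frp{'}(0))/2$ and $f'(0)/\inf_{r \geq 0} f'(r) = (\tilde g_\frp{'}(0)/\tilde g_\frp{'}(R))^2$. This is the step I expect to be the main obstacle: it requires choosing $f'$ as a decreasing function that behaves like $\tilde g_\frp{'}$ on $[0,R]$ (where $-\alpha_\frp r + \tilde g_\frp(r)$ may fail to be contractive) and is constant on $[R, \infty)$ (where $-\alpha_\frp r + \tilde g_\frp(r) \leq -\alpha_\frp r /2$ thanks to the definition of $R$), with matching at $r = R$ solved explicitly. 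The defining properties of $\tmcg$ — namely that $\tilde g_\frp$ is concave, non-decreasing, sublinear near the origin, and satisfies $2 \tilde g_\frp{''} + \tilde g_\frp\, \tilde g_\frp{'} \leq 0$ — are exactly what is needed to turn the ansatz into a genuine differential inequality.

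Given such $f$, Itô's formula gives $d f(r_t) \leq [-c f(r_t) + \tfrac12 f'(0)\, |\nabla U_\frp(Y_t) - \nabla U_\frq(Y_t)|] dt + dM_t$ with $M_t$ a local martingale (using $f'(r_t) \leq f'(0)$ by concavity). Multiplying by $e^{ct}$, taking expectations, exploiting $f(r_0) = 0$ and $Y_s \sim \frq$, dividing by $e^{ct}$ and letting $t \to +\infty$ yields $\limsup_{t\to\infty} \bfE[f(r_t)] \leq \tfrac{f'(0)}{2c} \int |\nabla U_\frp - \nabla U_\frq|\, \rmd \frq$. Since $f$ is concave with $f(0) = 0$ and $f'(r) \geq f'(R)$ for all $r \geq 0$, one has $r \leq f(r)/f'(R)$, so $\bfE|X_t - Y_t| \leq \bfE[f(r_t)]/f'(R)$. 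Combined with $\mathrm{Law}(X_t) \to \frp$, $\mathrm{Law}(Y_t) = \frq$, and the Kantorovich-Rubinstein bound $\bfW_1(\frp,\frq) \leq \liminf_{t \to \infty} \bfE |X_t - Y_t|$, this gives exactly the claimed inequality, with $\int |\nabla U_\frp - \nabla U_\frq|\, \rmd \frq = \int |\nabla \log(\rmd \frp/\rmd \frq)|\, \rmd \frq$ to match the second form.
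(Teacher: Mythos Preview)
Your approach is correct and reaches the same constant, but it is genuinely different from the paper's argument. The paper does \emph{not} couple the two SDEs with different drifts directly. Instead it uses a triangle inequality in the modified Wasserstein distance $\bfW_{f_\frp}$: writing $\bfW_{f_\frp}(\frp,\frq)\leq \bfW_{f_\frp}(\frp P_t^\frp,\frq P_t^\frp)+\bfW_{f_\frp}(\frq P_t^\frp,\frq)$, the first term is bounded by $e^{-\lambda_\frp t}\bfW_{f_\frp}(\frp,\frq)$ via reflection coupling of two copies of the \emph{same} SDE (drift $-\tfrac12\nabla U_\frp$) with different initial laws, while the second term is bounded by $C\,t\int|\nabla U_\frp-\nabla U_\frq|\,\rmd\frq$ via \emph{synchronous} coupling of the two SDEs started from the same point. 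Rearranging and sending $t\downarrow 0$ gives the result. Your single-coupling approach (reflection between the two different drifts, $t\to\infty$, ergodicity) is more streamlined, but the paper's two-step decomposition cleanly separates the contraction mechanism from the drift perturbation and sidesteps the technical nuisance of defining the reflection when $r_0=0$; you should either use Eberle's approximate reflection construction or, more simply, start $X_0\sim\frp$, $Y_0\sim\frq$ and use invariance of both marginals rather than ergodicity.

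One small correction: with the Lyapunov function you describe ($f'=\tilde g_\frp{'}$ on $[0,R]$ and constant beyond, which is exactly the paper's $f_\frp$), the differential inequality holds with rate $c=\lambda_\frp=\tfrac{\tilde g_\frp{'}(R)}{2}\bigl(\tfrac{\alpha_\frp}{\tilde g_\frp{'}(0)}+1\bigr)$ and ratio $f'(0)/\inf f'=\tilde g_\frp{'}(0)/\tilde g_\frp{'}(R)$, not the values $c=(\alpha_\frp+\tilde g_\frp{'}(0))/2$ and squared ratio you state. The two errors happen to cancel in the product $\tfrac{1}{2c}\cdot\tfrac{f'(0)}{\inf f'}$, so your final constant is correct, but the intermediate claims are not achievable with this $f$.
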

\begin{proof}
Firstly,  let us consider the function
\bes
f(r)\coloneqq \begin{cases}
    \tilde g_\frp(r)\quad&\text{ if }r\leq R\eqsp,\\
    \tilde g_\frp(R)+\tilde g_\frp{'}(R)\,(r-R)\quad&\text{ otherwise.}
\end{cases}
\ees
Notice that $f(0)=0$ and that $f\in\cC^1((0,+\infty),\rset_+)\cap\cC^2((0,R)\cup(R,+\infty),\rset_+)$ with $f{''}$ having a jump discontinuity in $r=R$. Moreover, $f$ is non-decreasing and concave and equivalent to the identity \ie, for any $r>0$ it holds
\be\label{eq:equiv:f}
\tilde g_\frp{'}(R)\,r\leq f(r)\leq \tilde g_\frp{'}(0)\,r\eqsp,\quad\text{ since }\eqsp \tilde g_\frp{'}(R)\leq f{'}(r)\leq \tilde g_\frp{'}(0)\eqsp.
\ee

Notice that, since $\tilde g_\frp\in\tilde \cG$, for any $r\in(0,R)$ it holds
\bes
\begin{aligned}
2\eqsp f^{''}(r)-\frac{r_t\eqsp f^{'}(r)}{2}\kappa_{U_\frp}(r)\leq&\, 2\eqsp \tilde g_\frp^{''}(r) -\frac{\alpha_\frp}{2}\,\tilde g_\frp{'}(r)\,r+\frac12{\tilde g}_\frp(r)\tilde g_\frp{'}(r)\leq  -\frac{\tilde g_\frp{'}(r)}{2}\,(\alpha_\frp\,r+\tilde g_\frp(r))\\
\leq &\,-\frac{\tilde g_\frp{'}(R)}{2}\,\biggl(\frac{\alpha_\frp}{\tilde g_\frp{'}(0)}+1\biggr)\eqsp\tilde g_\frp(r)=-\frac{\tilde g_\frp{'}(R)}{2}\,\biggl(\frac{\alpha_\frp}{\tilde g_\frp{'}(0)}+1\biggr)\eqsp f(r)\eqsp;
\end{aligned}
\ees
whereas for any $r>R$ it holds 
\bes
\begin{aligned}
2\eqsp f^{''}(r)-\frac{r_t\eqsp f^{'}(r)}{2}\kappa_{U_\frp}(r)=&\,-\frac{\tilde g_\frp{'}(R)}{2}\eqsp r\,\kappa_{U_\frp}(r)\leq -\frac{\tilde g_\frp{'}(R)}{2}\,(\alpha_\frp\,r-\tilde g_\frp(r))\\
\leq&\,  -\frac{\tilde g_\frp{'}(R)}{2}\,\alpha_\frp\,\frac{R}{\tilde g_\frp(R)}\,f(r)+\frac{\tilde g_\frp{'}(R)}{2}\,f(r)\eqsp,
\end{aligned}
\ees
where the last step follows from the concavity of $\tilde g_\frp$, which implies for any $r\geq R$ that $\tilde g_\frp(r)\leq f(r)$ and the monotonicity of $F(r)\coloneqq r/f(r)$, since for any $r\geq R$ it holds
\bes
F{'}(r)=\frac{f(r)-r\,f{'}(r)}{f(r)^2}=\frac{\tilde g_\frp(R)-R\,\tilde g_\frp{'}(R)}{f(r)^2}\geq 0\eqsp.
\ees
Therefore, by recalling the definition of $R$, for any $r\in(0,R)\cup(R,+\infty)$ we have shown that 
\be\label{eq:appo:diff:general}
2\eqsp f^{''}(r)-\frac{r_t\eqsp f^{'}(r)}{2}\kappa_{U_\frp}(r)\leq -\lambda_\frp\,f(r)\quad\text{ with }\eqsp
\lambda_\frp\coloneqq \frac{\tilde g_\frp{'}(R)}{2}\,\biggl(\frac{\alpha_\frp}{\tilde g_\frp{'}(0)}+1\biggr)\eqsp.
\ee

By relying on the above construction, we will prove our thesis considering the Wasserstein distance 
\bes
   \bfW_{f}(\frp,\,\frq)\coloneqq \inf_{\pi\in \Pi(\frp,\frq)}\mathbb{E}_{(X,Y)\sim\pi}\bigl[f(|X-Y|)\bigr]
   \ees
 induced by the concave function $f$. The above is indeed a distance since $f(0)=0$, $f$ is strictly increasing, concave and hence also subadditive (which implies the triangular inequality). Moreover, from \eqref{eq:equiv:f} it follows the equivalence between  $\bfW_{f}$ and the usual $\bfW_1$, namely
\be\label{eq:appo:equivalence:wasser:general}
\tilde g_\frp{'}(R)\eqsp\bfW_1(\frp,\,\frq)\leq \bfW_{f}(\frp,\,\frq)\leq \tilde g_\frp{'}(0)\eqsp\bfW_1(\frp,\,\frq)\eqsp,
\ee

For any $t\geq 0$ notice that
\be\label{eq:triangular:wasser:general}
\begin{split}
\bfW_{{f}}(\frp,\,\frq)\leq \bfW_{{f}}(\frp,\,\frq\,P_t^\frp)+\bfW_{{f}}(\frq\,P_t^\frp ,\,\frq)
=\bfW_{{f}}(\frp\,P_t^\frp,\,\frq\,P_t^\frp)+\bfW_{{f}}(\frq\,P_t^\frp ,\,\frq)\eqsp.
\end{split}\ee

In order to bound the first Wasserstein distance appearing in the upper bound \eqref{eq:triangular:wasser:general}, namely $\bfW_{{f}}(\frp\,P_t^\frp,\,\frq\,P_t^\frp)$, fix an initial coupling $(X_0,X^\frq_0)\sim \pi^\star$ distributed according to the optimal coupling for $\bfW_{{f}}(\frp,\,\frq)$
and consider the reflection coupling diffusion processes
\bes
\begin{cases}
    \De X_t=-\frac12\nabla U_\frp(X_t)\,\De t+\De B_t\\
    \De X^\frq_t=-\frac12\nabla U_\frp(X^\frq_t)\,\De t+\De \hat{B}_t\quad\forall\eqsp t\in[0,\tau)\eqsp\text{ and }X_t=X^\frq_t\quad\forall \eqsp t\geq \tau\\
    (X_0,X^\frq_0)\sim \pi^\star\eqsp,
\end{cases}
\ees
where $\tau\coloneqq \inf\{s\geq 0:X^\frq_s=X_s\}$, 
and $(\hat{B}_t)_{t\geq 0}$ is defined as
\begin{equation*}
\De \hat{B}_t\coloneqq (\operatorname{Id}-2\,e_t\,e_t^{{\sf  T}}\,\mathbf{1}_{\{t< \tau\}})\, \De B_t\qquad\text{where}\quad e_t\coloneqq \begin{cases}\frac{Z_t}{|Z_t|}\quad&\text{ when } r_t>0\,,\\
   u\quad&\text{ when }r_t=0\,.\end{cases}
\end{equation*}
where $Z_t\coloneqq X_t-X^\frq_t$, $r_t\coloneqq |Z_t|$ and $u\in\bbRD$ is a fixed (arbitrary) unit-vector. By L\'evy's characterization, $(\hat{B}_t)_{t\geq 0}$ is a $d$-dimensional Brownian motion. As a result, $X_t\sim \frp$ and $X^\frq_t\sim \frq P^\frp_t$ for any $t\geq0$.  In addition $\De W_t\coloneqq e_t^{{\sf T}}\De B_t$ is a one-dimensional Brownian motion. Let us notice that  for any $t<\tau$ it holds
\bes\begin{aligned}
\De Z_t=&\,-2^{-1}\,(\nabla U_\frp(X_t)-\nabla U_\frp(X^\frq_t))\,\De t+2\eqsp e_t\eqsp\De W_t\eqsp,\\
\De r_t^2=&\,-\langle Z_t,\eqsp\nabla U_\frp(X_t)-\nabla U_\frp(X^\frq_t)\rangle\eqsp\De t+4\eqsp\De t+4\eqsp \langle Z_t,\eqsp e_t\rangle\eqsp\De W_t\eqsp,\\
\De r_t=&\,-2^{-1}\,\langle e_t,\eqsp\nabla U_\frp(X_t)-\nabla U_\frp(X^\frq_t)\rangle\eqsp\De t+2\eqsp\De W_t\eqsp.
\end{aligned}
\ees

Now, an application of Ito-Tanaka formula \cite[Chapter VI, Theorem 1.5]{revuzyor}  to the concave function $f\in\cC^1((0,+\infty),\rset_+)\cap\cC^2((0,R)\cup(R,+\infty),\rset_+)$, gives for any $t<\tau$
\be\label{eq:Ito:Tanaka}
f(r_t)=f(r_0)+\int_0^t f{'}(r_s)\,\De r_s+\frac12\int_\rset L_t^a\,\mu_{f}(\De a)\eqsp,
\ee
where $(L_t^a)_{t}$ denotes the right-continuous local time of the semimartingale $(r_t)_t$, whereas $\mu_{f}$ is the non-positive measure representing $f{''}$ in the sense of distributions, \ie, $\mu_{f}([a,b])=f{'}(b)-f{'}(a)$ for any $a\leq b$. Let us further notice that the Meyer Wang occupation times formula \cite[Theorem 29.5]{kallenberg:Modern:Prob}, which for any measurable function $H\colon\rset\to [0,+\infty)$ reads as
\bes
\int_0^t H(r_s)\,\De[r]_s=\int_\rset H(a) L_t^a\,\De a\eqsp,
\ees
implies that the random set $\{s\in[0,\tau]\colon r_s=R\}$ has almost surely zero Lebesgue measure.
Particularly, since $\mu_{f}$ is non-positive, this combined with the above formula implies
\be\label{eq:occopation}
\frac12\int_\rset L_t^a\,\mu_{f}(\De a)\leq \frac12\int_\rset \mathbf{1}_{\{a\neq R\}} 
 L_t^af{''}(a)\De a=2\int_0^t\mathbf{1}_{\{r_s\neq R\}}f{''}(r_s)\De s=2\int_0^t\,f{''}(r_s)\De s.
\ee

As a byproduct of \eqref{eq:Ito:Tanaka} and \eqref{eq:occopation} we may finally state that for any $t<\tau$ it almost surely holds
\bes
\begin{split}
\De f(r_t)\leq&-\frac{f^{'}(r_t)}{2}\, \langle e_t,\eqsp\nabla U_\frp(X_t)-\nabla U_\frp(X^\frq_t)\rangle\eqsp\De t+2\eqsp f^{''}(r_t)\eqsp\De t+2\eqsp f^{'}(r_t)\eqsp\De W_t\\
\leq &\biggl(2\eqsp f^{''}(r_t)-\frac{r_t\eqsp f^{'}(r_t)}{2}\kappa_{U_\frp}(r_t)\biggr)\eqsp\De t+2\eqsp f^{'}(r_t)\eqsp\De W_t\\
\overset{\eqref{eq:appo:diff:general}}{\leq}&\eqsp -\lambda_\frp\eqsp f(r_t)\eqsp\De t+2\eqsp f^{'}(r_t)\eqsp\De W_t\eqsp.
\end{split}
\ees
By recalling that $f(r_t)=f(0)=0$ as soon as $t\geq\tau$, by taking expectation, integrating over time and Gronwall Lemma we have finally proven that for any $t\geq 0$
\be\label{eq:appo:prima:wasser:general}
\begin{split}
\bfW_{{f}}(\frp\,P_t^\frp,\,\frq\,P_t^\frp)\leq  \mathbb{E}[{f}(|X_t-X^\frq_t|)]
\leq e^{-\lambda_\frp \, t}\eqsp \mathbb{E}[f(|X_0-X^\frq_0|)]
= e^{-\lambda_\frp \, t}\eqsp \bfW_{{f}}(\frp,\,\frq)\,.
\end{split}
\ee

\medskip

Let us now provide a bound for the second Wasserstein distance appearing in the upper bound~\eqref{eq:triangular:wasser:general}, namely $\bfW_{{f}}(\frq\,P_t^\frp ,\,\frq)$, by relying on synchronous coupling technique. Therefore fix an initial random variable $Y_0\sim \frq$ and a $d$-dimensional Brownian motion $(B_t)_{t\geq 0}$, and consider now the diffusion processes 
\bes
\begin{cases}
    \De X^\frq_t=-\frac12\nabla U_\frp(X^\frq_t)\,\De t+\De B_t\\
    \De Y_t=-\frac12\nabla U_\frq(Y_t)\,\De t+\De B_t\\
    X^\frq_0=Y_0\sim \frq \eqsp.
\end{cases}
\ees
Notice that for any $t\geq 0$, $Y_t\sim \frq$ whereas $X^\frq_t\sim \frq P^\frp_t$. 
If we set now $\bar r_t\coloneqq |X^\frq_t-Y_t|$ we then have 
\bes
\begin{aligned}
\De (X^\frq_t-Y_t)=&\eqsp-2^{-1}(\nabla U_\frp(X^\frq_t)-\nabla U_\frq(Y_t))\eqsp\De t\eqsp,\\
\De \bar r_t^2=&\eqsp-\langle X^\frq_t-Y_t,\eqsp \nabla U_\frp (X^\frq_t)-\nabla U_\frq(Y_t)\rangle\eqsp\De t\eqsp.
\end{aligned}
\ees
At this point we would like to apply the square-root function, however the latter fails to be $\rmC^2$ in the origin whereas $\bar r_t$ may be equal to zero (e.g., we already start with $\bar r_0=0$). For this reason we are going to perform an approximation argument. Fix $\delta>0$ and consider the function $\rho_\delta(r)\coloneqq \sqrt{r+\delta}$. Then it holds
\bes
\begin{aligned}
    \De \rho_\delta(\bar r_t^2)=&\eqsp -(2\eqsp \rho_\delta(\bar r_t^2))^{-1}\langle X^\frq_t-Y_t,\eqsp \nabla U_\frp (X^\frq_t)-\nabla U_\frq(Y_t)\rangle\eqsp\De t\\
    =&\eqsp -(2\eqsp \rho_\delta(\bar r_t^2))^{-1}\langle X^\frq_t-Y_t,\eqsp \nabla U_\frp (X^\frq_t)-\nabla U_\frp(Y_t)\rangle\eqsp\De t\\
&\qquad\qquad\qquad\qquad-(2\eqsp \rho_\delta(\bar r_t^2))^{-1}\langle X^\frq_t-Y_t,\eqsp \nabla U_\frp (Y_t)-\nabla U_\frq(Y_t)\rangle\eqsp\De t\\
\leq &\eqsp -2^{-1}\eqsp\frac{\bar r_t^2}{\rho_\delta(\bar r_t^2)}\eqsp\kappa_\frp(\bar r_t)\eqsp\De t+2^{-1}\eqsp\frac{\bar r_t}{\rho_\delta(\bar r_t^2)}\eqsp| \nabla U_\frp-\nabla U_\frq|(Y_t)\De t\\
\leq &\eqsp -2^{-1}\eqsp\frac{\bar r_t^2}{\rho_\delta(\bar r_t^2)}\eqsp(\alpha_\frp- \tilde G_\frp)\eqsp\De t+2^{-1}\eqsp\frac{\bar r_t}{\rho_\delta(\bar r_t^2)}\eqsp| \nabla U_\frp-\nabla U_\frq|(Y_t)\De t\eqsp,
\end{aligned}
\ees
where in the last step we have relied on the fact that ${\tilde g}_\frp\in\tmcg$ implies ${\tilde g}_\frp(r)\leq \tilde G_\frp\,r$ for some positive constant ${\tilde G}_\frp>0$ (cf.~\Cref{remark:propG}). Therefore it holds
\bes
\begin{aligned}
\De \rho_\delta(\bar r_t^2)\leq \eqsp\frac{(\alpha_\frp- \tilde G_\frp)^+}{2}\eqsp \rho_\delta(\bar r_t^2)\De t\eqsp+2^{-1}| \nabla U_\frp-\nabla U_\frq|(Y_t)\De t\eqsp.
\end{aligned}
\ees
By taking expectation and integrating over time the above bound gives 
\bes
\begin{split}
\mathbb{E}[\rho_\delta(\bar r_t^2)]\leq \mathbb{E}[\rho_\delta(\bar r_0^2)]+\frac{(\alpha_\frp- \tilde G_\frp)^+}{2}\int_0^t \mathbb{E}[\rho_\delta(\bar r_s^2)]\eqsp\De s+2^{-1}\int_0^t\mathbb{E}[| \nabla U_\frp-\nabla U_\frq|(Y_s)]\eqsp\De s\\
=\sqrt{\delta}+\frac{(\alpha_\frp- \tilde G_\frp)^+}{2}\int_0^t \mathbb{E}[\rho_\delta(\bar r_s^2)]\eqsp\De s+\frac{t}{2}\,\int| \nabla U_\frp-\nabla U_\frq|\eqsp\De\frq\eqsp,
\end{split}
\ees
where in the last step we have relied on the fact that for any $t\geq 0$  $Y_t\sim \frq$ and that $\bar r_0=0$. Therefore Gronwall Lemma yields to
\bes
\begin{aligned}
\mathbb{E}[\bar r_t]\leq \mathbb{E}[\rho_\delta(\bar r_t^2)]
\leq \exp\biggl(\frac{t}{2}(\alpha_\frp- \tilde G_\frp)^+\biggr)\eqsp\biggl[\frac{t}{2}\int| \nabla U_\frp-\nabla U_\frq|\De\frq+\sqrt{\delta}\biggr].
\end{aligned}
\ees
By letting $\delta$ to zero in the above right-hand-side, we obtain the desired upper bound
\bes
\begin{aligned}
\bfW_{{f}}(\frq\,P_t^\frp ,\,\frq)\overset{   \eqref{eq:appo:equivalence:wasser:general}}{\leq} &\eqsp\tilde g_\frp{'}(0)\eqsp\bfW_1(\frq\,P_t^\frp ,\,\frq)\leq \tilde g_\frp{'}(0)\eqsp\mathbb{E}[|X^\frq_t-Y_t|]=\tilde g_\frp{'}(0)\eqsp\mathbb{E}[\bar r_t]\\
\leq&\eqsp \tilde g_\frp{'}(0)\eqsp\frac{t}{2}\eqsp\exp\biggl(\frac{t}{2}\eqsp(\alpha_\frp- \tilde G_\frp)^+\biggr)\eqsp\int| \nabla U_\frp-\nabla U_\frq|\eqsp\De\frq\eqsp.
\end{aligned}
\ees

\medskip

By putting together the last estimate with \eqref{eq:triangular:wasser:general} and \eqref{eq:appo:prima:wasser:general}  we have proven
\bes
\begin{split}
\bfW_{{f}}(\frp ,\,\frq)\leq &\,e^{-\lambda_\frp\, t}\eqsp \bfW_{{f}}(\frp ,\,\frq)+\tilde g_\frp{'}(0)\eqsp\frac{t}{2}\eqsp\exp\biggl(\frac{t}{2}\eqsp(\alpha_\frp- \tilde G_\frp)^+\biggr)\eqsp\int| \nabla U_\frp-\nabla U_\frq|\eqsp\De\frq\eqsp.
\end{split}
\ees
or equivalently
\bes
\bfW_{{f}}(\frp ,\,\frq)\leq\eqsp\frac{t/2}{1-e^{-\lambda_\frp\, t}}\eqsp\tilde g_\frp{'}(0)\eqsp\exp\biggl(\frac{t}{2}\eqsp(\alpha_\frp- \tilde G_\frp)^+\biggr)\eqsp\int| \nabla U_\frp-\nabla U_\frq|\eqsp\De\frq\eqsp,
\ees
which in the $t$ vanishing limit reads as
\bes
\bfW_{{f}}(\frp ,\,\frq)\leq\frac{\tilde g_\frp{'}(0)} {2\eqsp\lambda_\frp }\eqsp\int| \nabla U_\frp-\nabla U_\frq|\eqsp\De\frq\eqsp.
\ees
Combining the above bound with the equivalence \eqref{eq:appo:equivalence:wasser:general} concludes the proof.
\end{proof}

\begin{remark}[Explicit constants for $\tilde g_\frp$ as in \eqref{eq:tanh}]
Particularly, when $\tilde g_\frp$ is as in \eqref{eq:tanh} in \Cref{rem:g_L}, \ie 
\bes
\tilde g_\frp(r)=2\,(L)^{1/2}\,\tanh(r\,L^{1/2}/2)
\ees
for some $L$, then in the previous proof we have
$R=2(L)^{1/2}(L^{-1}+2/\alpha_\frp)$ and therefore 
\bes
    \bfW_1(\frp,\eqsp\frq)\leq \frac{\cosh^4(2(L)^{1/2}(L^{-1}+2/\alpha_\frp))}{\alpha_\frp+L}\eqsp\int|\nabla U_\frp-\nabla U_\frq|\De\frq\eqsp.
    \ees
\end{remark}

As a corollary of the above theorem, we are finally able to present the proof of \Cref{lemma:expl:sticky} where we give the key contraction estimates in $\bfW_1$ between conditional measures in the weak setting.

\begin{proof}[Proof of \Cref{lemma:expl:sticky}]
    Inequality \eqref{ineq:W1:psi} follows from the previous theorem when considering $\frp=\pi_T^{x,\psi^n}$ and $\frq=\pi^{x,\psistar}_T$. Indeed these two probabilities are the invariant measures associated to \eqref{SDE:p} with 
    $U_\frp(y)=(2T)^{-1}\,|y-x|^2+\psi^n(y)$ and $U_\frq(y)=(2T)^{-1}\,|y-x|^2+\psistar(y)$ respectively. \Cref{gio2022:thm} guarantees that there exists $\alpha_{\nu,n},\,\alpha_{\psistar}>-T^{-1}$ such that 
    \be
    \kappa_{U_\frp}(r)\geq T^{-1}+\alpha_{\nu,n}-r^{-1}{\tilde g}_\nu(r)\quad\text{ and }\quad
   \kappa_{U_\frq}(r)\geq T^{-1}+\alpha_{\psistar}-r^{-1}{\tilde g}_\nu(r)\eqsp.
   \ee
   Therefore $\frp$ and $\frq$ satisfy the assumptions of \Cref{teo:general}, which  gives \eqref{ineq:W1:psi}.

   We omit the details for the proof of \eqref{ineq:W1:phi} since it can be obtained in the same way, this time considering $\frp=\pi_T^{x,\varphi^{n+1}}$ and $\frq=\pi_T^{x,\varphistar}$.
\end{proof}

\section{Technical results and moment bounds}\label{technicalities}

\begin{lemma}[Exponential integrability of the marginals]\label{exp:integrability}
Let $\zeta \in \mcpalcd$ associated with $U:\rset^d\to \rset$ satisfying \eqref{eq:condition_a_log_concave_def}. Then for  any $\sigma \in(0,\alpha_U/2)$ it holds $\int \exp(\sigma |x|^2) \rmd \zeta (x) < \plusinfty$.
\end{lemma}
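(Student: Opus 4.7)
The plan is to exploit the asymptotic strong convexity of $U$ to show that $U$ grows at least quadratically at infinity, and then observe that $\sigma<\alpha_U/2$ gives a Gaussian-like finite integral. The crucial input is that $\tilde g_U\in\tmcg$ is bounded, so the integrated lower bound $\kappa_U(r)\geq \alpha_U-r^{-1}\tilde g_U(r)$ really yields a strongly convex profile up to a bounded perturbation.

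First I would turn the integrated convexity profile into a one-sided lower bound on $\langle \nabla U(x),x\rangle$. By definition of $\kappa_U$, for every $x\in\rset^d\setminus\{0\}$,
\[
\langle \nabla U(x)-\nabla U(0),x\rangle \geq |x|^2\,\kappa_U(|x|)\geq \alpha_U|x|^2-|x|\,\tilde g_U(|x|)\geq \alpha_U|x|^2-\|\tilde g_U\|_\infty\,|x|,
\]
using boundedness of $\tilde g_U$. Consequently $\langle \nabla U(x),x\rangle\geq \alpha_U|x|^2-C|x|$ with $C\coloneqq \|\tilde g_U\|_\infty+|\nabla U(0)|$.

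Next I would integrate along rays: writing $x=r\theta$ with $|\theta|=1$, the fundamental theorem of calculus gives
\[
U(r\theta)-U(0)=\int_0^r \langle \nabla U(s\theta),\theta\rangle\,\rmd s=\int_0^r s^{-1}\langle \nabla U(s\theta),s\theta\rangle\,\rmd s\geq \int_0^r(\alpha_U s-C)\,\rmd s=\tfrac{\alpha_U}{2}r^2-Cr.
\]
Thus $U(x)\geq U(0)+\tfrac{\alpha_U}{2}|x|^2-C|x|$ for every $x\in\rset^d$.

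Finally, substituting this lower bound into the density $\exp(-U(x))$ yields
\[
\int \rme^{\sigma|x|^2}\,\rmd\zeta(x)\leq \rme^{-U(0)}\int \rme^{-(\alpha_U/2-\sigma)|x|^2+C|x|}\,\rmd x,
\]
which is finite for every $\sigma\in(0,\alpha_U/2)$ by a standard Gaussian argument (complete the square). I do not expect any substantial obstacle: the only delicate point is ensuring that $\tilde g_U$ is bounded, which is built into the definition of $\tmcg$, so the argument is essentially a direct consequence of \eqref{eq:condition_a_log_concave_def} and the definition of $\mcpalcd$.
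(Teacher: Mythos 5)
Your proposal is correct and follows essentially the same route as the paper: both derive $\langle\nabla U(x),x\rangle\geq\alpha_U|x|^2-(\|\tilde g_U\|_\infty+|\nabla U(0)|)|x|$ from the integrated convexity profile and the boundedness of $\tilde g_U\in\tmcg$, integrate along rays to get $U(x)\geq U(0)+\tfrac{\alpha_U}{2}|x|^2-C|x|$, and conclude by a Gaussian integral comparison. The only cosmetic difference is your polar parametrization versus the paper's $t\mapsto U(tx)$, and the paper's splitting of the domain at a radius $R$, neither of which changes the substance.
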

\begin{proof}
 It is enough noticing that for any $x\in\bbRD$ it holds
 \bes
 \begin{split}
 \langle \nabla U(x),\,x\rangle= \langle \nabla U(x)-\nabla U(0),\,x\rangle+\langle\nabla U(0),\,x\rangle\geq \kappa_{U}(|x|)\,|x|^2-|\nabla U(0)|\,|x|\\
 \geq \alpha_U\,|x|^2-({\tilde g}_U(|x|)+|\nabla U(0)|)|x|\geq \alpha_U\,|x|^2-\bar G_U\,|x|
 \end{split}
 \ees
    where above we have set $\bar G_U=\|{\tilde g}_U\|_{\infty}+|\nabla U(0)|$. 
  Therefore for any $x\in\bbRD$ it holds
    \bes\begin{split}
    U(x)=&U(0)+\int_0^1\langle\nabla U(tx), x\rangle\,\De t\\
    \geq& U(0)+\int_0^1(\alpha_U\,t\,|x|^2-\bar G_U\,|x|)\,\De t=U(0)+\frac{\alpha_U}{2}\,|x|^2-\bar G_U\,|x|\eqsp.\end{split}
    \ees

    Finally we may deduce for any $\sigma\in(0,\alpha_U/2)$
    \bes\begin{split}
    \|\exp(\sigma|x|^2)\|_{\mathrm{L}^1(U)}=\int_{\{|x|\leq R\}}\exp(\sigma|x|^2)\De\zeta(x)+\int_{\{|x|> R\}}\exp(\sigma|x|^2)\De\zeta(x)\\
    \leq \rme^{\sigma|R|^2}\,U(\{|x|\leq R\})+
    \rme^{-U(0)}\int_{\{|x|> R\}}  \exp( -(\alpha_U/2-\sigma)|x|^2+\bar G_U|x|)\,\De x\\
    \leq \rme^{\sigma|R|^2}\,U(\{|x|\leq R\})+
    \rme^{-U(0)}\int_{\{|x|> R\}}  \exp( -(\alpha_U/4-\sigma/2)|x|^2)\,\De x<+\infty\eqsp,\\
  \end{split}
  \ees
    where above we have set $
    R\coloneqq 2\bar G_U\,(\alpha_U/2-\sigma)^{-1}$.
\end{proof}

\subsection{Proof of  \eqref{eq:expl:grad}}\label{sec:proof-crefprop:cov}

\begin{proposition}\label{prop:cov}
Assume \Cref{A:entropy} holds. Then for any $n\in\nsets$,  $h\in\{\psistar,\,\psi^n,\varphistar,\varphi^n\}$ it holds
\be
\begin{aligned}
\nabla \log P_T\rme^{-h}(x) =&\, T^{-1}\int (y-x)\, \pi_T^{x,h}(\De y)\eqsp,\\
\nabla^2 \log P_T\rme^{-h}(x) =&\, -T^{-1}\operatorname{Id}+ T^{-2}\,\mathrm{Cov}(\pi_T^{x,h})\eqsp.
\end{aligned}
\ee
\end{proposition}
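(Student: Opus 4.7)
The plan is to exploit the explicit Gaussian form of the heat semigroup and reduce both identities to straightforward differentiation under the integral sign, the only non-trivial point being the integrability justification.

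First I would rewrite $P_T e^{-h}(x) = (2\pi T)^{-d/2} Z(x)$ where
\bes
Z(x) \coloneqq \int_{\rset^d} \exp\biggl(-h(y) - \frac{|x-y|^2}{2T}\biggr)\,\De y,
\ees
so that $\log P_T e^{-h}(x) = -\tfrac{d}{2}\log(2\pi T) + \log Z(x)$ and $\pi_T^{x,h}(\De y) = Z(x)^{-1} \exp(-h(y)-|x-y|^2/(2T))\,\De y$. Formal differentiation under the integral yields
\bes
\nabla Z(x) = -T^{-1}\int (x-y)\,\exp\biggl(-h(y)-\frac{|x-y|^2}{2T}\biggr)\De y,
\ees
and the first identity follows from $\nabla \log Z = \nabla Z/Z$. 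For the Hessian, using $\nabla^2_x e^{-|x-y|^2/(2T)} = [T^{-2}(x-y)(x-y)^{\sf T} - T^{-1}\Id]\,e^{-|x-y|^2/(2T)}$ and the identity $\nabla^2 \log Z = Z^{-1}\nabla^2 Z - Z^{-2}\nabla Z\otimes \nabla Z$ immediately gives
\bes
\nabla^2 \log Z(x) = T^{-2}\int (x-y)(x-y)^{\sf T}\pi_T^{x,h}(\De y) - T^{-1}\Id - T^{-2}\Bigl(\int (x-y)\,\pi_T^{x,h}(\De y)\Bigr)^{\otimes 2},
\ees
which is exactly $-T^{-1}\Id + T^{-2}\mathrm{Cov}(\pi_T^{x,h})$ after recognising the covariance matrix (translation-invariance of the covariance allows replacing $x-y$ by $y$).

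It then remains to justify the interchange of $\nabla$, $\nabla^2$ and $\int$. This is where Corollary \ref{cor:lyap} comes in: it guarantees that each of the potentials $h\in\{\psistar,\psi^n,\varphistar,\varphi^n\}$ satisfies a one-sided lower bound of the form $\langle \nabla h(y),y/|y|\rangle \geq \alpha |y| - \hat g(|y|) - |\nabla h(0)|$ with $\alpha>-T^{-1}$ and $\hat g$ bounded. Integrating along rays then gives a quadratic lower bound $h(y) \geq \tfrac{\alpha}{2}|y|^2 - C(1+|y|)$ for some $C\geq 0$, so that for $x$ in any bounded set the integrand $\exp(-h(y)-|x-y|^2/(2T))$ together with its polynomial multipliers $|x-y|^k$ ($k=1,2,3,4$) are dominated by an integrable Gaussian-type function. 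Dominated convergence then legitimises passing $\nabla$ and $\nabla^2$ inside the integral, and also yields local $\cC^2$ smoothness of $Z$ and hence of $\log P_T e^{-h}$.

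The main obstacle is purely the integrability bookkeeping: one must ensure that even if $\alpha + T^{-1}$ is small, the quadratic lower bound on $h$ together with the Gaussian factor $e^{-|x-y|^2/(2T)}$ jointly control all moments of $\pi_T^{x,h}$ locally uniformly in $x$. This follows from $\alpha + T^{-1} > 0$ (equivalently $-\log\pi_T^{x,h}$ is strongly convex at infinity) which is precisely what Corollary \ref{cor:lyap} and Theorem \ref{gio2022:thm} produce; alternatively one may invoke Lemma \ref{exp:integrability} applied to the measure $\pi_T^{x,h}$, whose log-density has asymptotically positive convexity profile. Once this integrability is granted, the computation is the one sketched above.
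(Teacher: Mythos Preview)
Your formal computation is correct and coincides with the paper's. The problem is in the integrability justification.

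You invoke Corollary~\ref{cor:lyap} (and through it Theorem~\ref{gio2022:thm}) to obtain a quadratic lower bound on $h$. In the paper's logical order this is circular: Theorem~\ref{gio2022:thm} is proved via Lemma~\ref{appo:lemma33}, whose central step is precisely the Hessian identity \eqref{eq:covariance}, i.e., the second formula you are trying to establish here. Your alternative route, applying Lemma~\ref{exp:integrability} to $\pi_T^{x,h}$, has the same defect: knowing that $\pi_T^{x,h}\in\mcpalcd$ again requires the convexity profile of $h$ coming from Theorem~\ref{gio2022:thm}. One could in principle salvage your approach by weaving the differentiation-under-the-integral argument into the induction of Theorem~\ref{gio2022:thm}, proving both statements simultaneously step by step along Sinkhorn; but as written the bare appeal to Corollary~\ref{cor:lyap} is not legitimate.

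The paper avoids this entirely by using only the marginal structure. Since the plan $\pi^{n,n}$ built from $(\varphi^n,\psi^n)$ has $y$-marginal $\nu$, and $\nu$ has Gaussian moments by \Cref{A:entropy} and Lemma~\ref{exp:integrability}, one gets
\bes
\int_{\rset^{2d}}\exp\Bigl(\sigma_\nu|y|^2-\varphi^n(x)-\psi^n(y)-\tfrac{|x-y|^2}{2T}\Bigr)\,\De x\,\De y
=\int_{\rset^d}\rme^{\sigma_\nu|y|^2}\,\De\nu(y)<+\infty\eqsp.
\ees
This forces the inner integral in $y$ to be finite for at least one $\bar x$, and a simple quadratic shift $|x-y|^2\geq|\bar x-y|^2-2|x-\bar x|\,|y|+|x|^2-|\bar x|^2$ then yields, for every $x$ and any $\bar\sigma<\sigma_\nu$, finiteness of $\int\exp(\bar\sigma|y|^2-\psi^n(y)-|x-y|^2/(2T))\,\De y$. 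That is exactly the Gaussian-type domination you need, obtained with no a priori information on $\psi^n$ beyond its role in a coupling whose marginal is $\nu$.
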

\begin{proof}
    We will only prove the case $h=\psi^n$ since the other cases can be proven with the same argument. The proof will run as in \cite[Proposition 5.2]{conforti2022quadratic}, once we have noticed that \Cref{A:entropy} and \Cref{exp:integrability} guarantees the validity of $\exp(\sigma_\nu |x|^2)\in \mathrm{L}^1(\nu)$ for some positive $\sigma_\nu>0$. Therefore from \eqref{eq:sinkhorn_iterate} we know that
    \bes
    \int_{\bbRD\times\bbRD} \exp\biggl(\sigma_\nu|y|^2-\varphi^n(x)-\psi^n(y)-\frac{|x-y|^2}{2T}\biggr)\,\De x\,\De y=\int_{\bbRD}\sigma_\nu|y|^2\,\De\nu(y)<+\infty\eqsp,
    \ees
    and hence there exists at least one point $\bar x\in\bbRD$ such that
    \bes
    \int_{\bbRD} \exp\biggl(\sigma_\nu|y|^2-\psi^n(y)-\frac{|\bar x -y|^2}{2T}\biggr)\,\De y<+\infty\eqsp.
    \ees
    Since for any $x\in\bbRD$ we can always write
    \bes
    |x-y|^2=|\bar x-y|^2-2\langle x-\bar x,y\rangle +|x|^2-|\bar x|^2\geq |\bar x-y|^2-2|x-\bar x|\,|y|+|x|^2-|\bar x|^2\eqsp,
    \ees
    for any $\bar \sigma<\sigma_\nu$ we have
    \be\label{appo:dominant}
    \int_{\bbRD} \exp\biggl(\bar \sigma|y|^2-\psi^n(y)-\frac{|\bar x -y|^2}{2T}\biggr)\,\De y<+\infty\quad\forall\,x\in\bbRD\eqsp.
    \ee
    
    This allows to differentiate under the integral sign in 
    \bes
    \log P_T\exp(-\psi^n)(x)=-\frac{d}{2}\log(2\pi T)+\log\int \exp\biggl(-\psi^n(y)-\frac{| x -y|^2}{2T}\biggr)\De y
    \ees
    and get the validity of \eqref{eq:expl:grad} for $h=\psi^n$, \ie
    \bes\begin{split}
    \nabla\log P_T\exp(-\psi^n)(x)&=-\frac{x}{T}+\frac1T\,\frac{\int y\exp(-\psi^n(y)-\frac{|x-y|^2}{2T})\De y}{\int\exp(-\psi^n(y)-\frac{|x-y|^2}{2T})\De y}\\
    &=T^{-1}\int (y-x)\,\pi^{x,\psi^n}_T(\De y)\eqsp.\end{split}
    \ees
    The bound \eqref{appo:dominant} guarantees to differentiate again the above integral and finally deduce our thesis.
\end{proof}

\end{appendix}




\end{document}